\newtheorem{theorem}{Theorem}[section]
\newtheorem{proposition}[theorem]{Proposition}
\newtheorem{corollary}[theorem]{Corollary}
\newtheorem{lemma}[theorem]{Lemma}
\theoremstyle{definition}
\providecommand{\customgenericname}{}
\newcommand{\newcustomtheorem}[2]{\newenvironment{#1}[1]
  {\renewcommand\customgenericname{#2}
   \renewcommand\theinnercustomgeneric{##1}\innercustomgeneric}{\endinnercustomgeneric}}
\newcommand{\newcustomlemma}[2]{\newenvironment{#1}[1]
  {\renewcommand\customgenericname{#2}
   \renewcommand\theinnercustomgeneric{##1} \innercustomgeneric}{\endinnercustomgeneric}}
\newcommand\relphantom[1]{\mathrel{\phantom{#1}}}
\newcommand{\Ga}{\Gamma}
\newcommand{\rr}{\mathbb{R}}
\newcommand{\nn}{\mathbb{N}}
\newcommand{\mm}{\mathbf{m}}
\newcommand{\rn}{\mathbb{R}^n}
\newcommand{\rd}{\mathbb{R}^d}
\newcommand{\zz}{\mathbb{Z}}
\newcommand{\cc}{\mathbb{C}}
\newcommand{\bb}{\mathbf{b}}
\newcommand{\ff}{\mathbf{f}}
\newcommand{\wt}{\widetilde}
\newcommand{\wh}{\widehat}
\numberwithin{equation}{section}
\begin{document}

\begin{thanks}
{The author  is supported in part by NRF grant 2019R1F1A1044075 and by a KIAS Individual Grant MG070001 at Korea Institute for Advanced Study.}
\end{thanks}

\address{School of Mathematics \\
           Korea Institute for Advanced Study, Seoul\\
           Republic of Korea}
   \email{qkrqowns@kias.re.kr}

\author{Bae Jun Park}

\title[Fourier multipliers on a vector-valued function space]{Fourier multipliers on a vector-valued function space}

\subjclass[2000]{Primary 42B15, 42B25, 42B35}
\keywords{H\"ormander's multiplier theorem, vector-valued function space, Littlewood-Paley theory, Triebel-Lizorkin space}

\begin{abstract} 
We study multiplier theorems on a vector-valued function space, which is a generalization of the results of Calder\'on and Torchinsky \cite{Ca_To} and Grafakos, He, Honz\'ik, and Nguyen \cite{Gr_He_Ho_Ng}, and an improvement of the result of Triebel \cite{Tr1, Tr}. For $0<p<\infty$ and $0<q\leq \infty$ we obtain that if $r>\frac{d}{s-(d/\min{(1,p,q)}-d)}$, then
$$\big\Vert \big\{\big( m_k \widehat{f_k}\big)^{\vee}\big\}_{k\in\mathbb{Z}}\big\Vert_{L^p(\ell^q)}\lesssim_{p,q} \sup_{l\in\mathbb{Z}}{\big\Vert m_l(2^l\cdot)\big\Vert_{L_s^r(\rd)}} \big\Vert \big\{f_k\big\}_{k\in\mathbb{Z}}\big\Vert_{L^p(\ell^q)}, ~~f_k\in\mathcal{E}(A2^k),$$
under the condition $\max{(|d/p-d/2|,|d/q-d/2|)}<s<d/\min{(1,p,q)}$. An extension to $p=\infty$ will be additionally considered in the scale of Triebel-Lizorkin space.
 Our result is sharp in the sense that the Sobolev space in the above estimate cannot be replaced by Sobolev spaces $L_s^r$ with $r\leq \frac{d}{s-(d/\min{(1,p,q)}-d)}$.
\end{abstract}

\maketitle

\section{{Introduction and main results}}\label{introduction}

Let $S(\mathbb{R}^d)$ denote the Schwartz space and $S'(\mathbb{R}^d)$ the space of tempered distributions. For the Fourier transform of $f\in S(\mathbb{R}^d)$ we use the definition $\widehat{f}(\xi):=\int_{\mathbb{R}^d}{f(x)e^{-2\pi i\langle x,\xi\rangle}}dx$ and denote by $f^{\vee}(\xi):=\wh{f}(-\xi)$ the inverse Fourier transform of $f$. We also extend these transforms to the space of tempered distributions.

For $m\in L^{\infty}(\rd)$  the multiplier operator $T_m$ is defined by $T_mf(x):=\big( m\widehat{f}\big)^{\vee}(x)$ for $f\in S(\rd)$.
The classical Mikhlin multiplier theorem \cite{Mik} states that if a function $m$  satisfies
\begin{eqnarray*}
\big|  \partial_{\xi}^{\beta}m(\xi)  \big|\lesssim_{\beta}|\xi|^{-|\beta|}
\end{eqnarray*} for all multi-indices $\beta$ with $|\beta|\leq \big[d/2\big]+1$, then the operator $T_m$ is bounded in $L^p(\rd)$ for $1<p<\infty$.
 In \cite{Ho} H\"ormander sharpened the result of Mikhlin, using the weaker condition
\begin{eqnarray}\label{hocondition}
\sup_{l\in\zz}{\big\Vert m(2^l\cdot)\widehat{\phi}\big\Vert_{L^2_s(\rd)}}<\infty
\end{eqnarray} for $s>d/2$, where $L^2_s(\rd)$ denotes the standard fractional Sobolev space on $\rd$ and $\phi$ is a Schwartz function on $\rd$, which generates a Littlewood-Paley partition of unity via a dyadic dilation, defined in Section \ref{preliminary}.
Calder\'on and Torchinsky \cite{Ca_To} proved that if (\ref{hocondition}) holds for $s>d/p-d/2$, then $m$ is a Fourier multiplier of Hardy space $H^p(\rd)$ for $0<p\leq 1$. A different proof was given by Taibleson and Weiss \cite{Ta_We}. 
It turns out that the condition $s>d/\min{(1,p)}-d/2$ is optimal for the boundedness to hold and it is natural to ask whether (\ref{hocondition}) can be weakened by replacing $L^{2}_s(\rd)$ by other function spaces. 
Baernstein and Sawyer \cite{Ba_Sa} obtained endpoint $H^p(\rd)$ estimates by using Herz space conditions for $\big( m(2^j\cdot)\widehat{\phi} \big)^{\vee}$ and these estimates were improved and extended to Triebel-Lizorkin spaces by Seeger \cite{Se} and Park \cite{Park2}.
On the other hand, for $1<p<\infty$, using an interpolation method, Calder\'on and Torchinsky \cite{Ca_To} replaced $L_s^2(\rd)$ in (\ref{hocondition}) by $L_s^r(\rd)$ for the $L^p$-boundedness to hold and the assumption in their result was replaced by a weaker one by Grafakos, He, Honz\'ik, and Nguyen \cite{Gr_He_Ho_Ng}.
 Let $(I-\Delta)^{s/2}$ be the inhomogeneous fractional Laplacian operator, explicitly given by
\begin{equation*}
(I-\Delta)^{s/2}f:=\big((1+4\pi^2|\cdot|^2)^{s/2}\wh{f}\big)^{\vee}
\end{equation*} and let $L_s^r(\rd)$ be the space containing tempered distributions $f$, defined on $\rd$, for which the norm 
\begin{equation*}
\Vert f\Vert_{L^r_s(\rd)}:=\big\Vert (I-\Delta)^{s/2}f\big\Vert_{L^r(\rd)}
\end{equation*} is finite.

\begin{customthm}{A}\label{thma} 
Let $1<p<\infty$ and $|d/p-d/2|<s<d$. Suppose that
\begin{equation*}
\sup_{l\in\zz}{\big\Vert m(2^l\cdot )\wh{\phi}\big\Vert_{L_s^r(\rd)}}<\infty \qquad \text{ for }~ r>d/s.
\end{equation*}
Then $T_m$ is bounded in $L^p(\rd)$.
\end{customthm}
We also refer to \cite{Gr_Park, Gr_Sl} for further improvement of the multiplier theorem by using Lorentz space conditions.\\

 A vector-valued version of H\"ormander's multiplier theorem was studied by Triebel \cite{Tr0}, \cite[2.4.9]{Tr}.
 For $r>0$ let $\mathcal{E}(r)$ denote the space of all distributions whose Fourier transform is supported in $\big\{\xi\in\mathbb{R}^d:|\xi|\leq 2r\big\}$.
Let $A>0$. For $0<p<\infty$ and $0<q\leq \infty$ or for $p=q=\infty$ we define 
 \begin{equation*}
L_A^p(\ell^q):=\big\{\{f_k\}_{k\in\mathbb{Z}}\subset S':f_k\in\mathcal{E}(A2^k), \big\Vert \big\{ f_k\big\}_{k\in\mathbb{Z}}\big\Vert_{L^p(\ell^q)}<\infty \big\}.
\end{equation*}
To give a rigorous definition of the space, we recall that for each $f_k\in\mathcal{E}(A2^k)$
\begin{equation*}
f_k=f_k\ast \Pi_k \quad \text{ in the sense of tempered distribution}
\end{equation*}
where $\Pi_k$ is a Schwartz function whose Fourier transform is equal to $1$ on the ball of radius $A2^{k+1}$, centered at $0$ and is supported in a larger ball.
Since convolution between a tempered distribution and a Schwartz function is a smooth function, $f_k\ast \Pi_k$ is actually a smooth function and thus, the norm $\Vert \{f_k\}_{k\in \mathbb{Z}}\Vert_{L^p(\ell^q)}$ can be interpreted as 
\begin{equation*}
\Vert \{f_k\}_{k\in \mathbb{Z}}\Vert_{L^p(\ell^q)}=\Vert \{f_k\ast \Pi_k\}_{k\in \mathbb{Z}}\Vert_{L^p(\ell^q)}.
\end{equation*}
In the rest of this paper, we think of $f_k\in \mathcal{E}(A2^k)$ as a smooth function $f_k\ast \Pi_k$.

Then  $L_A^p(\ell^q)$ is a quasi-Banach space (Banach space if $p,q\geq 1$) with a (quasi-)norm $\Vert \cdot\Vert_{L^p(\ell^q)}$ ( see \cite{Tr} for more details ).

\begin{customthm}{B}\label{thmb} 
Let $0<p<\infty$, $0<q\leq \infty$, and $A>0$. Suppose $f_k\in\mathcal{E}(A2^k)$ for each $k\in\mathbb{N}$, and $\{m_k\}_{k\in\mathbb{N}}$ satisfies
\begin{equation}\label{mkkcondition}
\sup_{l\in\mathbb{N}}{\big\Vert m_l(2^l\cdot)\big\Vert_{L_s^2(\rd)}}<\infty 
\end{equation}
for
\begin{equation*}
 s>\Big\{\begin{array}{ll}
d/\min{(1,p,q)}-d/2\quad & \text{if}\quad q<\infty\\
d/p+d/2 \quad  & \text{if}\quad q=\infty 
\end{array}.
\end{equation*}
Then \begin{equation}\label{vectormulti}
\big\Vert \big\{\big( m_k \widehat{f_k}\big)^{\vee}\big\}_{k\in\mathbb{N}}\big\Vert_{L^p(\ell^q)}\lesssim_{p,q} \sup_{l\in\mathbb{N}}{\big\Vert m_l(2^l\cdot)\big\Vert_{L_s^2}} \big\Vert \big\{f_k\big\}_{k\in\mathbb{N}}\big\Vert_{L^p(\ell^q)}.
\end{equation}
\end{customthm}
It was first proved that if (\ref{mkkcondition}) holds for $s>d/2$, then (\ref{vectormulti}) works for $1<p,q<\infty$,  by using H\"ormander's multiplier theorem. 
For the case $0<p<\infty$ and $0<q\leq \infty$, it is easy to obtain that
(\ref{vectormulti}) is true under the assumption (\ref{mkkcondition}) with $s>d/2+d/\min{(p,q)}$. 
Then a complex interpolation method is applied to derive $s>d/\min{(1,p,q)}-d/2$ for general $0<p,q<\infty$.
However, the method  cannot be applied to the endpoint case $q=\infty$ and thus the assumption $s>d/p+d/2$ is required when $q=\infty$, which is stronger than seemingly ``natural" condition $s>d/\min{(1,p)}-d/2$. \\

The aim of this paper is to provide an improvement of Theorem \ref{thmb}, which would be actually a vector-valued extension of Theorem \ref{thma} in the full range $0<p\leq \infty$.
Let
\begin{equation*}
\tau^{(s,p)}:=\frac{d}{s-(d/\min{(1,p)}-d)}, \qquad \tau^{(s,p,q)}:=\frac{d}{s-(d/\min{(1,p,q)}-d)}.
\end{equation*}
For $\mm:=\{m_k\}_{k\in\mathbb{Z}}$, throughout this work we will use the notation:
\begin{equation*}
\mathcal{L}_s^r[\mm]:=\sup_{l\in\mathbb{Z}}{\big\Vert m_l(2^{l}\cdot)\big\Vert_{L_s^r(\rd)}}.
\end{equation*}

\begin{theorem}\label{main1}
Let $0<p<\infty$ and $0<q\leq \infty$, $A>0$, and
\begin{equation*}
\max{\big( \big|d/p-d/2 \big|, \big|d/q-d/2\big|\big)}<s<d/\min{(1,p,q)}.
\end{equation*}
Suppose $f_k\in\mathcal{E}(A2^k)$ for each $k\in\mathbb{Z}$ 
and $\mm:=\{m_k\}_{k\in\mathbb{Z}}$ satisfies 
\begin{equation*}
\mathcal{L}_s^r[\mm]<\infty \qquad \text{ for } ~ r>\tau^{(s,p,q)}.
\end{equation*}
Then
\begin{equation}\label{mainest1}
\big\Vert \big\{ \big( m_k \widehat{f_{k}}\big)^{\vee}\big\}_{k\in\mathbb{Z}}\big\Vert_{L^p(\ell^q)}\lesssim_{p,q} \mathcal{L}_s^r[\mm] \big\Vert \big\{ f_k\big\}_{k\in\mathbb{Z}}\big\Vert_{L^p(\ell^q)}.
\end{equation}
Moreover, the inequality also holds for $p=q=\infty$.
\end{theorem}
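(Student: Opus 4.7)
My approach is to establish a pointwise bound of the form $|T_k f_k(x)| \lesssim \mathcal{L}_s^r[\mm]\,(M|f_k|^t(x))^{1/t}$ with $t<\min(1,p,q)$, and then to conclude via the Fefferman--Stein vector-valued maximal inequality, handling the endpoints $q=\infty$ and $p=q=\infty$ by elementary modifications of the same pointwise bound.

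First I would carry out the standard normalizations: by dilation we may assume $A=1$. Fix $\eta\in C_c^\infty(\rd)$ with $\eta\equiv 1$ on $\{|\xi|\le 2\}$ and $\mathrm{supp}\,\eta\subset\{|\xi|\le 4\}$, and set $\sigma_k(\xi):=m_k(2^k\xi)\,\eta(\xi)$. Since $\widehat{f_k}$ is supported in $\{|\xi|\le 2^{k+1}\}$, inserting the factor $\eta(2^{-k}\cdot)$ in front of $m_k$ is harmless; since pointwise multiplication by the fixed Schwartz factor $\eta$ is bounded on $L_s^r$, we obtain $T_k f_k=K_k\ast f_k$ with $K_k(y)=2^{kd}\sigma_k^\vee(2^k y)$ and $\Vert\sigma_k\Vert_{L_s^r}\lesssim \mathcal{L}_s^r[\mm]$ uniformly in $k$.

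The core of the argument is a dyadic annular decomposition of $\sigma_k^\vee$ in the spatial variable. Using the identity $((I-\Delta)^{s/2}\sigma_k)^\vee(z)=(1+4\pi^2|z|^2)^{s/2}\sigma_k^\vee(z)$ together with Hausdorff--Young (for $r\le 2$; the case $r>2$ is handled by exploiting the bounded frequency support of $\sigma_k$ to reduce to the $L_s^2$ bound via a Sobolev embedding trade-off), one derives
\[
\int_{|z|\sim 2^j}|\sigma_k^\vee(z)|\,dz\;\lesssim\;2^{-j(s-d/r)}\,\Vert\sigma_k\Vert_{L_s^r}\qquad (j\ge 0).
\]
Combining this with the Peetre sub-mean-value bound $|g(x-y)|\lesssim(1+2^k|y|)^{d/t}(M|g|^t(x))^{1/t}$ for $g\in\mathcal{E}(2^k)$ and any $t>0$, splitting $K_k\ast f_k(x)$ into annuli $\{|y|\sim 2^{j-k}\}$, and summing the resulting geometric series yields
\[
|T_k f_k(x)|\;\lesssim\;\mathcal{L}_s^r[\mm]\cdot(M|f_k|^t(x))^{1/t}
\]
whenever $s+d-d/r-d/t>0$. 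The hypothesis $r>\tau^{(s,p,q)}$ is precisely the condition under which $t$ can be chosen strictly less than $\min(1,p,q)$ while still satisfying this strict inequality.

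For $0<p<\infty$ and $0<q<\infty$ I would then invoke the Fefferman--Stein vector-valued maximal inequality
\[
\big\Vert\big\{(M|f_k|^t)^{1/t}\big\}\big\Vert_{L^p(\ell^q)}\lesssim\big\Vert\{f_k\}\big\Vert_{L^p(\ell^q)},\qquad t<\min(p,q),
\]
which is compatible with the choice $t<\min(1,p,q)\le\min(p,q)$ above. The endpoint $q=\infty$ with $p<\infty$ follows from the same pointwise bound together with $\sup_k(Mg_k)\le M(\sup_k g_k)$ and the $L^{p/t}$-boundedness of $M$ (valid for $t<p$). The case $p=q=\infty$ is immediate from the pointwise bound by taking $\sup_{k,x}$, since $M$ is trivially bounded on $L^\infty$.

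\textbf{Main obstacle.} The sharpness of the exponent $\tau^{(s,p,q)}$ hinges on balancing the Sobolev decay $2^{-j(s-d/r)}$ of the kernel against the growth factor $2^{jd/t}$ of the Peetre bound on balls of radius $2^{j-k}$; this balance produces the characteristic $+d$ shift in the denominator of $\tau^{(s,p,q)}$, reflecting the volume growth of the annulus. A crude argument using only the standard Peetre maximal function (with $t$ forced close to $\min(p,q)$) yields only $r>d/(s-d/\min(p,q))$, which is strictly weaker than the theorem. Pushing $t$ all the way down to $\min(1,p,q)$ is what converts this into the sharp $r>\tau^{(s,p,q)}$, and it is at this step that the analysis is most delicate. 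The remaining technical wrinkle is the kernel estimate when $r>2$, where Hausdorff--Young is unavailable and one must exploit the compact frequency support of $\sigma_k$ to transfer the Sobolev control from $L_s^r$ down to $L_s^2$.
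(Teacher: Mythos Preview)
Your geometric-series bookkeeping contains an error that invalidates the whole scheme. With the annular bound
\[
\int_{|z|\sim 2^j}|\sigma_k^\vee(z)|\,dz\lesssim 2^{-j(s-d/r)}\Vert\sigma_k\Vert_{L_s^r}
\]
and the Peetre bound $|f_k(x-2^{-k}z)|\lesssim(1+|z|)^{d/t}(\mathcal{M}|f_k|^t(x))^{1/t}$, the contribution of the $j$-th annulus to $|T_kf_k(x)|$ is $\lesssim 2^{j(d/t-s+d/r)}$, so summability requires $s-d/r-d/t>0$, not $s+d-d/r-d/t>0$. There is no extra $+d$ ``reflecting the volume growth of the annulus''; that volume growth is already fully accounted for in the Hölder step producing the factor $2^{jd/r}$. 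Your own ``crude argument'' and your ``sharp argument'' are the same computation, and both give only
\[
r>\frac{d}{s-d/\min(p,q)},
\]
which is strictly worse than $r>\tau^{(s,p,q)}$ for every $(p,q)$ (e.g.\ at $p=q=2$ you get $r>d/(s-d/2)$ instead of the true threshold $r>d/s$). Pushing $t$ below $1$ does not help: smaller $t$ makes $d/t$ larger and the constraint harder to satisfy.

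This is precisely the ``easy'' estimate discussed in the paper's introduction to Theorem~B, which with $r=2$ yields only $s>d/2+d/\min(p,q)$. The genuine $+d$ shift in $\tau^{(s,p,q)}$ cannot be obtained from a pointwise Peetre bound; already in the scalar case $p=q$ the sharp result (Theorem~A for $1<p<\infty$, and Lemma~\ref{basiclemma} for $0<p\le 1$) requires either Calder\'on--Torchinsky type complex interpolation or Bernstein's inequality $\Vert K\ast f\Vert_{L^p}\lesssim 2^{kd(1/p-1)}\Vert K\Vert_{L^p}\Vert f\Vert_{L^p}$ for $p<1$, and it is the factor $2^{kd(1/p-1)}$ in the latter that produces the $-d$ in the exponent. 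The paper's proof builds on this scalar lemma and proceeds by a case analysis using atomic decomposition of $\dot{f}_p^{0,q}$ when $0<p\le 1<q$, a sharp-maximal-function characterization of $L_A^p(\ell^q)$ when $0<q\le 1<p$, and complex interpolation combined with a duality argument when $1<p,q$. None of these ingredients is present in your proposal.
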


 Theorem \ref{main1} can be extended to the case $p=\infty$ and $0<q<\infty$ in the scale of Triebel-Lizorkin space. To describe this, let $\mathcal{D}$ denote the collection of all dyadic cubes in $\mathbb{R}^d$ and for each $P\in \mathcal{D}$ let $\ell(P)$ be the side length of $P$.

\begin{theorem}\label{main2}
Let $0<q< \infty$, $A>0$, $\mu\in\zz$, and
\begin{equation*}
 \big|d/q-d/2\big|<s<d/\min{(1,q)}.
\end{equation*}
Suppose $f_k\in\mathcal{E}(A2^k)$ for each $k\in\mathbb{Z}$ 
and $\mm:=\{m_k\}_{k\in\mathbb{Z}}$ satisfies
\begin{equation*}
\mathcal{L}_s^r[\mm]<\infty, \qquad \text{ for }~ r>\tau^{(s,q)}.
\end{equation*}
Then
\begin{align*}
&\sup_{P\in\mathcal{D}, \ell(P)\leq 2^{-\mu}}{\Big(\frac{1}{|P|}\int_P{\sum_{k=-\log_2{\ell(P)}}^{\infty}{\big| \big(m_k\widehat{f_{k}} \big)^{\vee}(x)\big|^q}}dx \Big)^{1/q}}\\
&\lesssim_{q}  \mathcal{L}_s^r[\mm]\sup_{P\in\mathcal{D}, \ell(P)\leq 2^{-\mu}}{\Big(\frac{1}{|P|}\int_P{\sum_{k=-\log_2{\ell(P)}}^{\infty}{|f_{k}(x)|^q}}dx \Big)^{1/q}}
\end{align*} uniformly in $\mu$.
\end{theorem}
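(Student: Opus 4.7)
The plan is to reduce Theorem \ref{main2} to Theorem \ref{main1} applied on the ``diagonal'' $p=q$ via a standard spatial localization around each test cube, and then to handle the resulting tail contribution with a kernel decay estimate. Fix a dyadic cube $P_0\in\mathcal{D}$ with $\ell(P_0)=2^{-j_0}$, $j_0\ge\mu$, write $\mathcal{S}$ for the supremum on the right-hand side of the claimed inequality, and let $P_0^\ast:=3\sqrt{d}\,P_0$ be a concentric dilate. Choose a Schwartz $\phi$ with $\wh{\phi}\equiv 1$ on $\{|\xi|\le 2A\}$ and compactly supported Fourier transform, set $\phi_k:=2^{kd}\phi(2^k\cdot)$, and exploit the reproducing identity $f_k=\phi_k\ast f_k$ to split
$$
f_k=\phi_k\ast(f_k\chi_{P_0^\ast})+\phi_k\ast(f_k\chi_{(P_0^\ast)^c})=:f_k^{\mathrm{in}}+f_k^{\mathrm{out}},
$$
where both pieces still lie in $\mathcal{E}(A'2^k)$ for some slightly larger $A'$. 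A $q$-(sub)additivity argument then reduces bounding
$$\mathcal{N}(P_0):=\Bigl(\frac{1}{|P_0|}\int_{P_0}\sum_{k\ge j_0}\big|(m_k\wh{f_k})^{\vee}\big|^q\,dx\Bigr)^{1/q}$$
to controlling the analogous quantities $\mathcal{N}^{\mathrm{in}}(P_0)$ and $\mathcal{N}^{\mathrm{out}}(P_0)$ separately by $\mathcal{L}_s^r[\mm]\,\mathcal{S}$.

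The near-part estimate follows by applying Theorem \ref{main1} with $p=q$, whose hypotheses $|d/q-d/2|<s<d/\min(1,q)$ and $r>\tau^{(s,q)}$ coincide with ours in this case. It yields
$$
\Bigl(\sum_{k\ge j_0}\big\|(m_k\wh{f_k^{\mathrm{in}}})^{\vee}\big\|_{L^q}^q\Bigr)^{1/q}\lesssim \mathcal{L}_s^r[\mm]\Bigl(\sum_{k\ge j_0}\|f_k^{\mathrm{in}}\|_{L^q}^q\Bigr)^{1/q}.
$$
I would then invoke the Peetre-type pointwise bound $|\phi_k\ast g|\lesssim_t \mathcal{M}(|g|^t)^{1/t}$ for some $0<t<q$ (with $\mathcal{M}$ the Hardy--Littlewood maximal operator) and the $L^{q/t}$-boundedness of $\mathcal{M}$ to deduce $\|f_k^{\mathrm{in}}\|_{L^q}\lesssim \|f_k\|_{L^q(P_0^\ast)}$. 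Covering $P_0^\ast$ by $O_d(1)$ dyadic cubes $Q$ of side length $\ell(P_0)\le 2^{-\mu}$ (each with $-\log_2\ell(Q)=j_0$) converts the sum into at most $|P_0|\,\mathcal{S}^q$, and dividing by $|P_0|^{1/q}$ gives $\mathcal{N}^{\mathrm{in}}(P_0)\lesssim \mathcal{L}_s^r[\mm]\,\mathcal{S}$.

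The far part requires a direct kernel analysis. Writing
$$
(m_k\wh{f_k^{\mathrm{out}}})^{\vee}(x)=\int_{(P_0^\ast)^c}\wt{K}_k(x-y)\,f_k(y)\,dy,\qquad \wt{K}_k(z)=2^{kd}G_k(2^k z),
$$
with $G_k:=(m_k(2^k\cdot)\wh{\phi})^{\vee}$, I would decompose $(P_0^\ast)^c$ into dyadic shells $E_i$ ($i\ge 1$) of thickness $\sim 2^i\ell(P_0)$ and cover each $E_i$ by $O(2^{id})$ dyadic cubes of side $\ell(P_0)$. For $x\in P_0$ and $y\in E_i$, the inequality $|2^k(x-y)|\gtrsim 2^{i+k-j_0}\ge 2^i$ places $G_k$ in its decay regime, where I would apply an integrated tail bound of the form
$$
\Bigl(\int_{|z|\ge R}|G_k(z)|^{\tilde q}\,dz\Bigr)^{1/\tilde q}\lesssim \mathcal{L}_s^r[\mm]\,R^{-\kappa},\qquad R\ge 1,
$$
for an appropriate dual exponent $\tilde q$ and some $\kappa>0$; the positivity of $\kappa$ is precisely what the hypothesis $r>\tau^{(s,q)}$ delivers. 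Pairing this with H\"older's inequality on each cube, summing the geometric series first in $i$ and then in $k\ge j_0$, and bounding each local $L^q$-mass of $f_k$ on $Q$ by $|Q|\mathcal{S}^q$ would furnish $\mathcal{N}^{\mathrm{out}}(P_0)\lesssim \mathcal{L}_s^r[\mm]\,\mathcal{S}$.

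The hard part will be this third step: establishing the kernel decay lemma with the sharp exponent $\tau^{(s,q)}$ and orchestrating the double summation over annuli $i$ and scales $k\ge j_0$ without picking up logarithmic losses. For $q<1$, there is the additional complication that H\"older's inequality on each shell must be replaced by a Bernstein-type inequality adapted to the band-limited structure of $f_k$.
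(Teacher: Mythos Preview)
Your localization strategy---splitting each $f_k$ into a piece supported near the test cube and a far tail, then handling the near piece with the diagonal case $p=q$---is exactly the skeleton the paper uses, and your treatment of $\mathcal{N}^{\mathrm{in}}(P_0)$ is essentially the paper's argument: the paper reintroduces the Fourier support via a convolution with $\Psi_{k+1}$ (your $\phi_k$), invokes the scalar estimate $\|m_k^\vee\ast g\|_{L^q}\lesssim\mathcal{L}_s^r[\mm]\|g\|_{L^q}$ (Lemma~\ref{basiclemma}, which is precisely your ``Theorem~\ref{main1} with $p=q$''), then controls $\|\Psi_{k+1}\ast(\chi_{P^\ast}f_k)\|_{L^q}$ by $\|\mathfrak{M}_{\sigma,2^k}f_k\|_{L^q(P^\ast)}$ and finishes with the $\dot F_\infty$-maximal inequality~(\ref{maximal2}).

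Where you diverge is the far part. Your plan---annular shells, cube-by-cube H\"older, an $L^{\tilde q}$ tail bound on the kernel, then a double geometric sum---can be pushed through in most of the range, but it is considerably more delicate than what the paper actually does, and your worries about losses and about $q<1$ are real (in particular the H\"older step needs $r'\ge q'$, which is not automatic across the full parameter range, and the cube count $2^{id}$ must be absorbed precisely). The paper bypasses all of this with a one-line pointwise bound: for $x\in P$,
\[
\big|m_k^\vee\ast(\chi_{(P^\ast)^c}f_k)(x)\big|\le \mathfrak{M}_{\epsilon,2^k}f_k(x)\int_{|z|\gtrsim \ell(P)}(1+2^k|z|)^{\epsilon}|m_k^\vee(z)|\,dz,
\]
and a single H\"older/Hausdorff--Young estimate gives the integral $\lesssim 2^{-(k-\nu)(s-\epsilon-d/r)}\mathcal{L}_s^r[\mm]$ with $s-\epsilon-d/r>d/\min(1,q)-d\ge 0$. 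The Peetre factor $\mathfrak{M}_{\epsilon,2^k}f_k$ is then controlled in $L^\infty(\ell^\infty)$ and passed to the right-hand side via the band-limited embedding~(\ref{embeddinginfty}). This avoids any shell decomposition, any H\"older on small cubes, and works uniformly for all $0<q<\infty$; the entire ``hard part'' you flag disappears. I would replace your far-part scheme with this Peetre-maximal-plus-$L^\infty(\ell^\infty)$ argument.
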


As a corollary of the two theorems, we can prove the $\dot{F}_{p}^{\alpha,q}$-boundedness of the operator $T_m$, which is a generalization of Theorem \ref{thma} and an improvement of the result in \cite{Tr1}.
\begin{corollary}\label{maincoro}
Let $0<p,q\leq \infty$ and $\alpha \in\rr$. Suppose 
\begin{equation*}
\max{\big( \big|d/p-d/2 \big|, \big|d/q-d/2\big|\big)}<s<d/\min{(1,p,q)}
\end{equation*}
and $m\in L^{\infty}(\rd)$ satisfies 
\begin{equation*}
\sup_{l\in\zz}{\big\Vert m(2^l\cdot)\wh{\phi}\big\Vert_{L^r_s(\rd)}}<\infty \quad \text{ for }~ r>\tau^{(s,p,q)}.
\end{equation*}
Then
\begin{equation*}
\Vert T_mf\Vert_{\dot{F}_p^{\alpha,q}(\rd)}\lesssim \sup_{l\in\zz}{\big\Vert m(2^l\cdot)\wh{\phi}\big\Vert_{L^r_s(\rd)}}\Vert f\Vert_{\dot{F}_{p}^{\alpha,q}(\rd)}.
\end{equation*}

\end{corollary}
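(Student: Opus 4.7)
The plan is to reduce the multiplier estimate on $\dot F_p^{\alpha,q}$ to the vector-valued inequalities in Theorems \ref{main1} and \ref{main2} via a Littlewood--Paley decomposition. Let $\phi\in S(\rd)$ be the Littlewood--Paley function (with $\wh\phi$ supported in an annulus $\{c\leq |\xi|\leq C\}$) that generates the norm on $\dot F_p^{\alpha,q}$, and write $\phi_k:=2^{kd}\phi(2^k\cdot)$. Choose an auxiliary Schwartz function $\Phi$ whose Fourier transform is supported in a slightly larger annulus and satisfies $\wh\Phi\equiv 1$ on $\mathrm{supp}(\wh\phi)$; set $\Phi_k:=2^{kd}\Phi(2^k\cdot)$ and $f_k:=\Phi_k\ast f$, so that $f_k\in\mathcal{E}(A2^k)$ for a suitable $A>0$. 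Then the crucial identity
\begin{equation*}
\phi_k\ast T_mf \;=\; \big(\wh\phi_k\, m\, \wh\Phi_k\, \wh f\big)^{\vee}\;=\;\big(m_k\wh{f_k}\big)^{\vee},
\end{equation*}
holds with $m_k(\xi):=m(\xi)\wh{\phi_k}(\xi)=m(\xi)\wh\phi(\xi/2^k)$. A simple rescaling gives $m_k(2^k\xi)=m(2^k\xi)\wh\phi(\xi)$, so that
\begin{equation*}
\mathcal{L}_s^r[\mm]=\sup_{k\in\zz}\big\Vert m(2^k\cdot)\wh\phi\big\Vert_{L_s^r(\rd)}<\infty
\end{equation*}
is exactly the hypothesis on $m$ in Corollary \ref{maincoro}.

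For $0<p<\infty$ and $0<q\leq\infty$, I apply Theorem \ref{main1} to the sequences $\{m_k\}_{k\in\zz}$ and $\{g_k\}_{k\in\zz}$ with $g_k:=2^{k\alpha}f_k\in\mathcal{E}(A2^k)$, obtaining
\begin{equation*}
\Vert T_mf\Vert_{\dot F_p^{\alpha,q}}=\big\Vert\{2^{k\alpha}(m_k\wh{f_k})^{\vee}\}_k\big\Vert_{L^p(\ell^q)}=\big\Vert\{(m_k\wh{g_k})^{\vee}\}_k\big\Vert_{L^p(\ell^q)}\lesssim \mathcal{L}_s^r[\mm]\,\Vert\{g_k\}\Vert_{L^p(\ell^q)},
\end{equation*}
and the last factor equals $\Vert f\Vert_{\dot F_p^{\alpha,q}}$ by the standard Littlewood--Paley characterization. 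The ``Moreover'' clause of Theorem \ref{main1} handles the case $p=q=\infty$.

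For the remaining endpoint $p=\infty$, $0<q<\infty$, I use the Carleson-type characterization
\begin{equation*}
\Vert f\Vert_{\dot F_\infty^{\alpha,q}}\approx \sup_{\mu\in\zz}\sup_{\substack{P\in\mathcal{D}\\\ell(P)\leq 2^{-\mu}}}\Big(\frac{1}{|P|}\int_P\!\!\!\sum_{k\geq -\log_2\ell(P)}\!\!\!2^{k\alpha q}\big|\phi_k\ast f(x)\big|^q\,dx\Big)^{1/q},
\end{equation*}
apply the same identity $\phi_k\ast T_mf=(m_k\wh{g_k})^{\vee}$ with $g_k=2^{k\alpha}f_k$, and invoke Theorem \ref{main2} (whose estimate is uniform in $\mu$) to pass from the Carleson norm of $\{g_k\}$ to that of $\{(m_k\wh{g_k})^{\vee}\}$, losing only a factor of $\mathcal{L}_s^r[\mm]$.

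The proof is essentially a bookkeeping exercise once the identification $\phi_k\ast T_mf=T_{m_k}f_k$ is made; the only point requiring minor care is choosing $\Phi_k$ so that $f_k\in\mathcal{E}(A2^k)$ with a uniform constant $A$, and verifying that the constant $A$ (which enters the hypotheses of Theorems \ref{main1} and \ref{main2}) does not depend on $k$. I do not expect any substantial obstacle.
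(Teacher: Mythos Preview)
Your proposal is correct and follows essentially the same approach as the paper, which simply says the corollary follows by setting $m_k=m\wh{\phi_k}$ and $f_k=2^{\alpha k}\wt{\phi_k}\ast f$ with $\wt{\phi_k}:=\phi_{k-1}+\phi_k+\phi_{k+1}$ and then invoking Theorems \ref{main1} and \ref{main2}. The only cosmetic difference is that the paper uses the sum $\phi_{k-1}+\phi_k+\phi_{k+1}$ as the auxiliary function whereas you introduce a separate $\Phi$ with $\wh\Phi\equiv 1$ on $\mathrm{supp}(\wh\phi)$; both choices yield $f_k\in\mathcal{E}(A2^k)$ and an equivalent Littlewood--Paley norm on the right-hand side.
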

This follows from setting $m_k=m\wh{\phi_k}$ and $f_k=2^{\alpha k}\wt{\phi_k}\ast f$ where $\wt{\phi_k}:=\phi_{k-1}+\phi_k+\phi_{k+1}$. 
The detailed proof is omitted as standard arguments are applicable.
We refer the reader to Section \ref{preliminary} for the definition of Triebel-Lizorkin spaces $\dot{F}_{p}^{\alpha,q}(\rd)$. As the space $\dot{F}_p^{\alpha,q}$ is a generalization of many function spaces such as Lebesgue space, Hardy space and $BMO$, Corollary \ref{maincoro} also implies the boundedness of $T_m$ on such function spaces.\\

It turns out that the condition $s>|d/p-d/2|$ is optimal for the $L^p$-boundedness to hold in Theorem \ref{thma} and the proof can be found in Slav\'ikov\'a \cite{Sl}. Moreover, Grafakos and Park \cite{Gr_Park} recently proved that the condition $r>d/s$ should be also necessary in the theorem, using properties of Bessel potentials, which will be described in (\ref{esth}) later. We now consider the sharpness of the condition $r>\tau^{(s,p,q)}$ in Theorem \ref{main1}. Our claim is that (\ref{mainest1}) fails for $r=\tau^{(s,p,q)}$.
\begin{theorem}\label{negativemain}
Let $0<p<\infty$, $0<q\leq \infty$, and $d/\min{(1,p,q)}-d<s<d/\min{(1,p,q)}.$
 Then there exists $\mm:=\{m_k\}_{k\in\zz}$ such that $\mathcal{L}_{s}^{\tau^{(s,p,q)}}[\mm]<\infty$, but (\ref{mainest1}) does not hold.
\end{theorem}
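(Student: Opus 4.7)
The plan is to adapt the scalar sharpness argument of Grafakos and Park \cite{Gr_Park}, which exploits the critical integrability of Bessel potentials (see (\ref{esth})), to the vector-valued setting. Write $\kappa:=\min(1,p,q)$, so that the critical exponent is $\tau:=\tau^{(s,p,q)}=d/(s+d-d/\kappa)$, and note that $\beta:=d/\kappa-s\in(0,d)$ under the hypothesis on $s$. The Bessel kernel $G_\beta$ then lies in $L^{\tau,\infty}(\rd)\setminus L^\tau(\rd)$, and this is the precise borderline obstruction driving the counterexample.

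I first construct a single multiplier $m$ realizing the failure at the scalar Hardy-space level. Following the recipe of \cite{Gr_Park}, one builds $m$ from a smooth truncation of $G_\beta$ in such a way that $(I-\Delta)^{s/2}m$ is (essentially) a compactly supported regularization of $G_\beta$: this places $\|m\|_{L^\tau_s(\rd)}$ on the boundary of $L^\tau$-integrability while keeping the borderline singularity $|x|^{\beta-d}$ in the kernel $\check m$, which in turn makes $T_m$ unbounded on $H^\kappa(\rd)$. Setting $m_k(\xi):=m(2^{-k}\xi)$ for $k\in\mathbb{Z}$, one has $\mathcal{L}^\tau_s[\mm]=\|m\|_{L^\tau_s(\rd)}<\infty$ by dilation invariance of the Sobolev norm.

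To propagate the scalar failure into the vector inequality (\ref{mainest1}), I distinguish two regimes. In the first, $\kappa=\min(1,p)$ (covering $p\le q$ or $q\ge 1$), I collapse the inequality to one scale by setting $f_k\equiv0$ for $k\ne0$ and taking $f_0\in\mathcal{E}(A)$ to be a near-extremizer for the scalar multiplier inequality; using the standard equivalence $\|f_0\|_{H^\kappa}\asymp\|f_0\|_{L^\kappa}$ for band-limited $f_0$, the vector failure follows directly from the scalar $H^\kappa$-unboundedness of $T_m$. In the second regime, $\kappa=q<\min(1,p)$ (so $q<1$ and $q<p$), the vector structure is genuinely needed: I take $f_k(x):=2^{kd/p}g(2^k(x-x_k))$ for $k=1,\dots,N$ and zero otherwise, where $\widehat{g}$ is supported in $\{|\xi|\le A\}$ and the centers $\{x_k\}$ are chosen lacunarily so that the supports of the $f_k$ are essentially disjoint. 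Then $\|\{f_k\}\|_{L^p(\ell^q)}\lesssim N^{1/p}$ (essentially disjoint supports collapse $\ell^q$ onto $\ell^p$), whereas the outputs $(m_k\widehat{f_k})^{\vee}=f_k*\check{m_k}$ inherit at each scale the critical non-integrability of $G_\beta$, and their $\ell^q$-aggregation (using $1/q>1/p$) produces a lower bound $\gtrsim N^{1/q}(\log N)^{c}$ for the left-hand side of (\ref{mainest1}), contradicting the inequality as $N\to\infty$.

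The principal obstacle is the sharp Sobolev estimate $\|m\|_{L^\tau_s}<\infty$ at the critical endpoint $r=\tau$, which rests on the Bessel-potential identity (\ref{esth}) used in \cite{Gr_Park}; this is the technical heart of the construction. A secondary difficulty is the control of Schwartz cross-tails of the kernels $\check m_k$ in the genuinely vector regime, which is managed by taking $|x_k-x_j|$ lacunarily large enough that off-diagonal contributions to the $\ell^q$-sums are absorbed into the main term.
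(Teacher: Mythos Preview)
Your construction has a genuine gap at its very first step. You say that $G_\beta\in L^{\tau,\infty}\setminus L^\tau$ and then propose to build $m$ so that $(I-\Delta)^{s/2}m$ is ``(essentially) a compactly supported regularization of $G_\beta$'', concluding that $\|m\|_{L^\tau_s}<\infty$. These two statements are incompatible: if $(I-\Delta)^{s/2}m$ retains the local behavior $|\xi|^{\beta-d}$ of $G_\beta$ (and it must, by your own description), then $\|(I-\Delta)^{s/2}m\|_{L^\tau}=\infty$, since $\tau(d-\beta)=d$. What is actually needed --- and this is precisely the content of the Grafakos--Park construction you cite via (\ref{esth}) --- is the \emph{logarithmically corrected} potential
\[
\mathcal{H}^{(t,\gamma)}(x)=\big(1+4\pi^2|x|^2\big)^{-t/2}\big(1+\ln(1+4\pi^2|x|^2)\big)^{-\gamma/2},\qquad t=d/\kappa,
\]
with $\gamma$ chosen in the window $2/\tau<\gamma<2/\kappa$. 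The upper bound $\gamma\tau/2>1$ is what pushes $(I-\Delta)^{s/2}\wh{\mathcal H^{(t,\gamma)}}$ into $L^\tau$ (this is exactly what (\ref{esth}) provides), while the lower bound $\gamma\kappa/2<1$ is what keeps the kernel $K=\mathcal H^{(t,\gamma)}\ast\eta$ out of $L^\kappa$. Without the logarithm you cannot have both simultaneously, so the multiplier you describe does not satisfy $\mathcal{L}^{\tau}_s[\mm]<\infty$.

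Separately, your reduction in the ``genuinely vector'' regime $\kappa=q<\min(1,p)$ is underspecified and takes a harder road than necessary. The paper does not build explicit lacunary test sequences; instead it invokes the Christ--Seeger necessary condition (\ref{qbound}), which says directly that if (\ref{boundassumption}) holds with $q\le p$ then $\|K\|_{L^q}\lesssim\mathcal A$. Combined with a nonnegativity trick (one takes $K=\mathcal H^{(t,\gamma)}\ast\eta\ge 0$, so that the scalar $L^p$ bound for $p\ge 1$ forces $\|K\|_{L^1}=\wh K(0)\le\|\wh K\|_{L^\infty}\lesssim\mathcal A$), this yields $\|K\|_{L^{\min(1,p,q)}}\lesssim\mathcal A$ in all cases with a single stroke. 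Your lacunary-center scheme would have to reprove a quantitative form of this from scratch, and the claimed lower bound $N^{1/q}(\log N)^c$ is not justified --- in particular, with the plain $G_\beta$ kernel the outputs $m^\vee\ast g$ actually lie in $L^p$ when $p>\kappa$, so a more delicate accounting is required than you indicate.
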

Remark that the assumption $d/\min{(1,p,q)}-d<s<d/\min{(1,p,q)}$ is clearly weaker than $\max{\big( \big|d/p-d/2 \big|, \big|d/q-d/2\big|\big)}<s<d/\min{(1,p,q)}$ in Theorem \ref{main1}.\\

We first study Theorem \ref{main2}, using a proper separation of $f_k$ and $F_{\infty}$-variants of Peetre's maximal inequality, introduced by the author \cite{Park1}. 
For the proof of Theorem \ref{main1}, 
the case $0<p=q\leq \infty$ can be handled in a easy way via the $L^p$-boundedness of $T_{m_k}$, which is stated in Lemma \ref{basiclemma}, and thus our interest will be given to the case $p\not= q$. For the case $0<p\leq 1$ and $p<q\leq \infty$ we will establish a discrete characterization of $L_A^p(\ell^q)$ by using the $\varphi$-transform of Frazier and Jawerth \cite{Fr_Ja0, Fr_Ja1, Fr_Ja2, Fr_Ja3} and apply atomic decomposition of discrete function space $\dot{f}_{p}^{0,q}$ in \cite{Fr_Ja3}, which is analogous to the atomic decomposition of $H^p(\rd)$.
When $0<q\leq 1$ and $q<p<\infty$, the proof relies on a characterization of $L_A^p(\ell^q)$ by a dyadic version of the Fefferman-Stein sharp maximal function \cite{Fe_St1}.
The remaining case $1<p<\infty$ and $1<q\leq \infty$ follows from a combination of complex interpolation techniques in Proposition \ref{complexinterpolation} and duality arguments in Lemma \ref{dualitylemma}. 
The central idea to prove Theorem \ref{negativemain} is a necessary condition for a vector-valued inequality of convolution operator in the paper of Christ and Seeger \cite{Ch_Se} and a behavior of variants of Bessel potentials in the paper of Grafakos and Park \cite{Gr_Park}. See (\ref{qbound}) and (\ref{esth}) below.\\

{\bf Basic setting :}
 The constant $A$ plays a minor role in the results and in fact, it affects the results only up to a constant. Hence, we fix $A=2^{-2}$ in the proof to avoid unnecessary complications.
 Moreover, if $f_k\in\mathcal{E}(2^{k-2})$, then $\big( m_k\widehat{f_{k}}\big)^{\vee}=\big( (m_k\widehat{\Psi_{k}})\widehat{f_k}\big)^{\vee}$ 
 where $\Psi_k\in S(\rd)$ is a Schwartz function having the properties that $Supp(\wh{\Psi_k})\subset \{\xi\in\rd: |\xi|\leq 2^k\}$ and $\wh{\Psi_k}(\xi)=1$ for $|\xi|\leq 2^{k-1}$. This function will be officially defined in Section \ref{phitransform}, using dyadic dilation $\Psi_k(x)=2^{kd}\Psi_0(2^kx)$.
 Then the Kato-Ponce inequality \cite{Ka_Po} yields that for $1<r<\infty$ and $s\geq 0$,
 \begin{equation*}
 \big\Vert \big(m_k\widehat{\Psi_k}\big)(2^{k}\cdot)\big\Vert_{L_s^{r}(\rd)}=\big\Vert m_k(2^k\cdot)\widehat{\Psi_0} \big\Vert_{L_s^{r}(\rd)}\lesssim \Vert m_k(2^k\cdot)\Vert_{L_s^{r}(\rd)}
 \end{equation*} and this enables us to assume that 
\begin{equation}\label{assumptionm}
Supp(m_k)\subset \big\{ \xi\in\mathbb{R}^d: |\xi|\leq 2^{k}\big\}
\end{equation} in the proof.
With this assumption, we can write $\big(m_k\widehat{f_{k}}\big)^{\vee}(x)=m_k^{\vee}\ast f_{k}(x)$.\\

This paper is organized as follows. Section \ref{preliminary} is dedicated to preliminaries, introducing definitions and general properties which will be used in our proofs.
Two characterizations of $L^p_A(\ell^q)$ will be given in Section \ref{phitransform}, and by using one of them we dualize the function space $L^p_A(\ell^q)$ for $1<p<\infty$ and $1\leq q<\infty$ in Section \ref{dualization}. In Section \ref{complexsection} we present a complex interpolation theorem for multipliers on $L^p_A(\ell^q)$, based on the idea of Triebel \cite[2.4.9]{Tr}. Section \ref{keylemmasection} contains a lemma which will play a fundamental role in the proof of both Theorem \ref{main1} and \ref{main2}.
The proof of Theorem \ref{main1}, \ref{main2}, and \ref{negativemain} will be provided in the last three sections.\\

{\bf Notations :} 
We use standard notations. Let $\mathbb{N}$ be the collection of all natural numbers and $\mathbb{N}_0:=\mathbb{N}\cup\{0\}$. 
Denote by $\mathbb{Z}$ and $\rr$ the set of all integers and the set of all real numbers, respectively.
 Let  $\mathcal{D}$ stand for the set of all dyadic cubes in $\mathbb{R}^d$ as above and
 for each $k\in\mathbb{Z}$, let $\mathcal{D}_{k}$ be the subset of $\mathcal{D}$ consisting of the cubes with side length $2^{-k}$.
 We use the symbol $X\lesssim Y$ to indicate that $X\leq CY$ for some constant $C>0$, possibly different at each occurrence,  and  $X\approx Y$ if $A\lesssim B$ and $B\lesssim A$ simultaneously.\\

\section{Preliminaries}\label{preliminary}

\subsection{Function spaces}

Let $\Phi_0$ be a Schwartz function so that $Supp(\widehat{\Phi_0})\subset \big\{\xi\in\mathbb{R}^d: |\xi|\leq 2 \big\}$ and $\widehat{\Phi_0}(\xi)=1$ for $|\xi|\leq 1$ and define $\phi:=\Phi_0-2^{-d}\Phi_0(2^{-1}\cdot)$ and $\phi_k:=2^{kd}\phi(2^k\cdot)$.
Then  $\{\phi_k\}_{k\in\mathbb{Z}}$ forms a (homogeneous) Littlewood-Paley partition of unity. That is,
 $Supp(\widehat{\phi_k})\subset \big\{\xi\in\mathbb{R}^d: 2^{k-1}\leq |\xi|\leq 2^{k+1} \big\}$ and
$\sum_{k\in\mathbb{Z}}{\widehat{\phi_k}(\xi)}=1$ for $\xi\not= 0$.

For $0<p,q\leq \infty$ and $\alpha\in\mathbb{R}$, the (homogeneous) Triebel-Lizorkin space $\dot{F}_p^{\alpha,q}(\rd)$ is defined by the collection of all $f\in S'/\mathcal{P}$ (tempered distribution modulo polynomials) such that 
 \begin{equation*}
 \Vert f\Vert_{\dot{F}_p^{\alpha,q}(\rd)}:=\big\Vert \big\{ 2^{\alpha k}\phi_k\ast f\big\}_{k\in\mathbb{Z}}\big\Vert_{L^p(\ell^q)}<\infty, \quad 0<p<\infty ~\text{ or }~ p=q=\infty,
 \end{equation*}
 \begin{equation*}
\Vert f\Vert_{\dot{F}_{\infty}^{\alpha,q}(\rd)}:=\sup_{P\in\mathcal{D}}{\Big( \frac{1}{|P|}\int_P{\sum_{k=-\log_2{\ell(P)}}^{\infty}{2^{\alpha kq}\big| \phi_k\ast f(x)\big|^q}}dx\Big)^{1/q}}, \quad 0<q<\infty
\end{equation*} 
where the supremum is taken over all dyadic cubes in $\mathbb{R}^d$.
 Then these spaces provide a general framework that unifies classical function spaces:
\begin{align*}
&\text{Hardy space} &\dot{{F}}_p^{0,2}(\rd)={H}^p(\rd)  & &0<p<\infty\\
&\text{Hardy-Sobolev space}&\dot{{F}}_p^{{\alpha},2}(\rd)={H}^p_{{\alpha}}(\rd)  & &  0<p<\infty\\
&BMO &\dot{{F}}_{{\infty}}^{0,2}(\rd)=BMO(\rd)\\
&\text{Sobolev-$BMO$} &\dot{{F}}_{{\infty}}^{\alpha,2}(\rd)=BMO_{\alpha}(\rd).
\end{align*}
Note that  $H^p(\rd)=L^p(\rd)$ if $1<p<\infty$.

\subsection{Maximal inequalities}

A crucial tool in theory of function spaces is the maximal inequalities of Fefferman and Stein \cite{Fe_St} and Peetre \cite{Pe}.

 Let $\mathcal{M}$  be the Hardy-Littlewood maximal operator, defined by
 \begin{equation*}
 \mathcal{M}f(x):=\sup_{Q: x\in Q}{\frac{1}{|Q|}\int_Q{|f(y)|}dy}
 \end{equation*} where the supremum is taken over all cubes containing $x$, and for $0<t<\infty$ let $\mathcal{M}_tf:=\big(  \mathcal{M}(|f|^t) \big)^{1/t}$.
Then the Fefferman-Stein vector-valued maximal inequality \cite{Fe_St} states that for $0<r<p,q<\infty$,
\begin{equation}\label{hlmax}
\Big\Vert  \Big(\sum_{k}{(\mathcal{M}_{r}f_k)^q}\Big)^{1/{q}} \Big\Vert_{L^p(\rd)} \lesssim  \Big\Vert \Big( \sum_{k}{|f_k|^q}  \Big)^{1/{q}}  \Big\Vert_{L^p(\rd)}.
\end{equation}  
The inequality (\ref{hlmax}) also holds for $0<p\leq \infty$ and $q=\infty$.

For $k\in\mathbb{Z}$ and $\sigma>0$ we now define the Peetre maximal operator $\mathfrak{M}_{\sigma,2^k}$ by the formula
\begin{equation*}
\mathfrak{M}_{\sigma,2^k}f(x):=\sup_{y\in\mathbb{R}^d}{\frac{|f(x-y)|}{(1+2^k|y|)^{\sigma}}}.
\end{equation*}
It is known in \cite{Pe} that for $f\in \mathcal{E}(A2^k)$,
\begin{equation}\label{maximalbound}
\mathfrak{M}_{d/r,2^k}f(x)\lesssim_A \mathcal{M}_rf(x) \qquad \text{ uniformly in }~ k.
\end{equation}   Then   (\ref{hlmax}) and (\ref{maximalbound}) yield the following maximal inequality: 
Suppose $f_k\in\mathcal{E}(A2^k)$ for some $A>0$. Then for $0<p<\infty$ or $p=q=\infty$, we have
\begin{equation}\label{maximal1}
\big\Vert \big\{\mathfrak{M}_{\sigma,2^k}f_k\big\}_{k\in\mathbb{Z}}\big\Vert_{L^p(\ell^q)}\lesssim_A \big\Vert \{ f_k\}_{k\in\mathbb{Z}}\big\Vert_{L^p(\ell^q)}.
\end{equation} if $\sigma>d/\min{(p,q)}$. 

Furthermore, a $\dot{F}_{\infty}$-version of (\ref{maximal1}) is recently given by the author \cite{Park1} :
Suppose $f_k\in\mathcal{E}(A2^k)$ for some $A>0$. Then for $0<q<\infty$ and $\mu\in\mathbb{Z}$, we have
\begin{equation}\label{maximal2}
\sup_{P\in\mathcal{D}_{\mu}}{\Big( \frac{1}{|P|}\int_P{\sum_{k=-\log_2{\ell(P)}}^{\infty}{\big(\mathfrak{M}_{\sigma,2^k}f_k(x) \big)^q}}dx\Big)^{1/q}}\lesssim\sup_{P\in\mathcal{D}_{\mu}}{\Big( \frac{1}{|P|}\int_P{\sum_{k=-\log_2{\ell(P)}}^{\infty}{\big|f_k(x) \big|^q}}dx\Big)^{1/q}}
\end{equation} uniformly in $\mu$ if $\sigma>d/q$.
We remark that (\ref{maximal2}) does not hold when $\mathfrak{M}_{\sigma,2^k}f_k$ is replaced by $\mathcal{M}_rf_k$ for all $0<r<\infty$.

As an application of (\ref{maximal2}),  we have
\begin{align}\label{embeddinginfty}
\big\Vert \big\{f_k\big\}_{k\geq \mu}\big\Vert_{L^{\infty}(\ell^{\infty})}&\lesssim \sup_{P\in\mathcal{D}: \ell(P)\leq 2^{-\mu}}{\Big( \frac{1}{|P|}\int_P{\sum_{k=-\log_2{\ell(P)}}^{\infty}{|f_k(x) |^{q}}}dx\Big)^{1/q}}.
\end{align}
See \cite{Park1} for more details.

\subsection{$\varphi$-transform in $\dot{F}_p^{0,q}$}

For a sequence of complex numbers $\bb:=\{b_Q\}_{Q\in\mathcal{D}}$ we define 
\begin{equation*}
\Vert \bb \Vert_{\dot{f}_p^{0,q}}:=\big\Vert  g^{q}(\bb)  \big\Vert_{L^p(\rd)}, \qquad 0<p<\infty \quad \text{or}\quad p=q=\infty
\end{equation*} 
\begin{equation*}
\Vert \bb\Vert_{\dot{f}_{\infty}^{0,q}}:=\sup_{P\in\mathcal{D}}{\Big(\frac{1}{|P|}\int_P{\sum_{Q\in \mathcal{D}, Q\subset P}{\big( |b_Q||Q|^{-1/2}\chi_Q(x)\big)^q}}dx \Big)^{1/q}}, \quad 0<q<\infty
\end{equation*}
where 
\begin{equation*}
g^{q}(\bb)(x):=\big\Vert \big\{|b_Q||Q|^{-1/2}\chi_Q(x) \big\}_{Q\in\mathcal{D}}\big\Vert_{\ell^q}.
\end{equation*}
Then the Triebel-Lizorkin space $\dot{F}_p^{0,q}(\rd)$ can be characterized by the discrete function space $\dot{f}_p^{0,q}$: For $Q\in\mathcal{D}$ let $x_Q$ be the lower left corner of $Q$.
Every $f\in \dot{F}_p^{0,q}(\rd)$ can be written as
\begin{equation*}
 f=\sum_{{Q\in\mathcal{D}}}{b_Q\varphi^Q} \qquad \text{ in }~ S'/\mathcal{P}
\end{equation*} 
where $\varphi_k$ and $\widetilde{\varphi_k}$ are Schwartz functions with localized frequency, involving Littlewood-Paley decomposition,  $\varphi^Q(x):=|Q|^{1/2}\varphi_k(x-x_Q)$, $\widetilde{\varphi}^Q(x):=|Q|^{1/2}\widetilde{\varphi_k}(x-x_Q)$  for each $Q\in\mathcal{D}_k$, and $b_Q:=\langle f,\widetilde{\varphi}^Q\rangle$.
To be specific, since $\sum_{k\in\mathbb{Z}}\widehat{\varphi_k}(\xi)\widehat{\widetilde{\varphi_k}}(\xi)=1$ for $\xi\not= 0$,
we have $f=\sum_{k\in\mathbb{Z}}\varphi_k\ast \widetilde{\varphi_k}\ast f$ in $S'/\mathcal{P}$ and for each $k\in \mathbb{Z}$
\begin{equation}\label{decomposition1}
\varphi_k\ast \widetilde{\varphi_k}\ast f(x)=\sum_{Q\in\mathcal{D}_k}b_Q\varphi^Q(x).
\end{equation}
Moreover, in the case, we have
\begin{equation}\label{decomposition2}
\Vert \bb \Vert_{\dot{f}_p^{0,q}} \lesssim \Vert  f  \Vert_{\dot{F}_p^{0,q}(\rd)}.
\end{equation}
The converse estimate is also true.  For any sequence $\bb:=\{b_Q\}_{Q\in\mathcal{D}}$ of complex numbers satisfying $\Vert  \bb\Vert_{\dot{f}_p^{0,q}}<\infty$,  
\begin{equation*}
f(x):=\sum_{Q\in\mathcal{D}}{b_Q\varphi^Q(x)}
\end{equation*} belongs to $\dot{F}_p^{0,q}$ and indeed,
\begin{equation}\label{converse}
\Vert  f  \Vert_{\dot{F}_p^{0,q}(\rd)} \lesssim \Vert  \bb \Vert_{\dot{f}_p^{0,q}}.
\end{equation}
See \cite{Fr_Ja0, Fr_Ja1} for more details.

\subsection{ Atomic decomposition of $\dot{f}_p^{0,q}$  }

Let $0<p\leq 1$ and $p\leq q\leq \infty$. A sequence of complex numbers $\mathbf{r}:=\{r_Q\}_{Q\in\mathcal{D}}$ is called an $\infty$-atom for $\dot{f}_p^{0,q}$ if there exists  $Q_0\in \mathcal{D}$ such that 
\begin{eqnarray*}
r_Q=0 \quad \text{if}\quad Q \not\subset Q_0
\end{eqnarray*}
 and \begin{eqnarray}\label{infdef}
\big\Vert  g^{q}(\mathbf{r})  \big\Vert_{L^{\infty}(\rd)}\leq |Q_0|^{-{1}/{p}}.
\end{eqnarray}
Then the following atomic decomposition of $\dot{f}_p^{0,q}$ holds:
\begin{lemma}\label{decomhardy}\cite{Fr_Ja2, Fr_Ja3}
Suppose $0<p\leq 1$, $p\leq q\leq\infty$, and $\bb:=\{b_Q\}_{Q\in\mathcal{D}}\in \dot{f}_p^{0,q}$. Then there exist $C_{p,q}>0$, a sequence of scalars $\{\lambda_j\}$, and a sequence of $\infty$-atoms $\mathbf{r}_j=\{r_{j,Q}\}_{{Q\in\mathcal{D}}}$ for $\dot{f}_p^{0,q}$ so that 
\begin{equation*}
\bb=\{b_Q\}_{Q\in\mathcal{D}}=\sum_{j=1}^{\infty}{\lambda_j\{r_{j,Q}\}_{Q\in\mathcal{D}}}=\sum_{j=1}^{\infty}{\lambda_j \mathbf{r}_j},
\end{equation*} and 
\begin{equation*}
\Big(\sum_{j=1}^{\infty}{|\lambda_j|^p}\Big)^{{1}/{p}}\leq C_{p,q}\big\Vert   \bb\big\Vert_{\dot{f}_{p}^{0,q}}.
\end{equation*}
Moreoever, it follows that 
\begin{eqnarray*}
\big\Vert  \bb  \big\Vert_{\dot{f}_p^{0,q}}\approx \inf{\Big\{ \Big(\sum_{j=1}^{\infty}{|\lambda_j|^p}\Big)^{{1}/{p}}   :  \bb=\sum_{j=1}^{\infty}{\lambda_j \mathbf{r}_j} ,~ \mathbf{r}_j ~\text{is a sequence of $\infty$-atoms for $\dot{f}_p^{0,q}$}    \Big\}}.
\end{eqnarray*}

\end{lemma}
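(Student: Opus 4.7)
The plan is to adapt the classical Calder\'on--Zygmund atomic decomposition of Hardy spaces to the sequence space $\dot{f}_p^{0,q}$ by a stopping-time construction on the level sets of the area function $g^q(\bb)$. For each $j\in\zz$ set $\Omega_j:=\{x\in\rd:g^q(\bb)(x)>2^j\}$ and its enlargement $\Omega_j^*:=\{x:\mathcal{M}^d(\chi_{\Omega_j})(x)>1/2\}$, where $\mathcal{M}^d$ is the dyadic Hardy--Littlewood maximal operator. Since $g^q(\bb)\in L^p$, the weak $(1,1)$ bound yields $|\Omega_j^*|\lesssim|\Omega_j|<\infty$, so $\Omega_j^*$ decomposes into a disjoint collection of maximal dyadic cubes $\{Q_{j,k}\}_k$; dyadic maximality forces the parent $\widehat Q_{j,k}$ to satisfy $|\widehat Q_{j,k}\cap\Omega_j|\le|\widehat Q_{j,k}|/2$, so $\widehat Q_{j,k}$ contains a ``good'' point where $g^q(\bb)\le 2^j$.

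Next, every cube $Q$ with $b_Q\ne 0$ is assigned to a unique pair $(j(Q),k(Q))$: let $j(Q)$ be the largest $j$ with $|Q\cap\Omega_j|>|Q|/2$, which forces $Q\subset\Omega_{j(Q)}^*$ and hence $Q\subset Q_{j(Q),k(Q)}$ for a unique $k$. Putting
\begin{equation*}
\lambda_{j,k}:=C_0\cdot 2^{j+1}|Q_{j,k}|^{1/p},\qquad r_{j,k,Q}:=\lambda_{j,k}^{-1}\,b_Q\,\mathbf{1}_{\{(j(Q),k(Q))=(j,k)\}}
\end{equation*}
for a dimensional constant $C_0$ to be fixed, I obtain the telescoping decomposition $\bb=\sum_{j,k}\lambda_{j,k}\mathbf{r}_{j,k}$, with each $\mathbf{r}_{j,k}$ supported on subcubes of $Q_{j,k}$. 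The coefficient control
\begin{equation*}
\sum_{j,k}|\lambda_{j,k}|^p\lesssim\sum_j 2^{jp}\sum_k|Q_{j,k}|\lesssim\sum_j 2^{jp}|\Omega_j|\approx\Vert g^q(\bb)\Vert_{L^p}^p=\Vert\bb\Vert_{\dot{f}_p^{0,q}}^p
\end{equation*}
follows from the layer-cake formula. The converse direction of the norm equivalence in the ``Moreover'' statement is immediate from $p$-subadditivity of $\Vert\cdot\Vert_{\dot{f}_p^{0,q}}^p$ combined with $\Vert\mathbf{r}\Vert_{\dot{f}_p^{0,q}}\le\Vert g^q(\mathbf{r})\Vert_{L^\infty}|Q_0|^{1/p}\le 1$ for every $\infty$-atom $\mathbf{r}$, which holds by H\"older together with the support condition.

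The main obstacle is verifying the $L^\infty$ normalization (\ref{infdef}), namely that for every $x\in Q_{j,k}$,
\begin{equation*}
\bigg(\sum_{\substack{Q\ni x\\(j(Q),k(Q))=(j,k)}}|b_Q|^q|Q|^{-q/2}\bigg)^{1/q}\le C_0\cdot 2^{j+1}.
\end{equation*}
For $x\in Q_{j,k}\setminus\Omega_{j+1}$ this is trivial since $g^q(\bb)(x)\le 2^{j+1}$ dominates the partial sum; when $x\in Q_{j,k}\cap\Omega_{j+1}$, the nested assigned cubes $Q\ni x$ all lie strictly inside $Q_{j,k}$ and each satisfies $|Q\cap\Omega_{j+1}^c|\ge|Q|/2$ by the choice of $j(Q)$, so the delicate step is to convert this ``majority'' information into a uniform pointwise bound at $x$, which is done by comparing with $g^q(\bb)$ at the good point in $\widehat Q_{j,k}\setminus\Omega_j$ supplied by the enlargement. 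The precise dyadic nesting argument is carried out exactly as in Frazier--Jawerth \cite{Fr_Ja2,Fr_Ja3}, and after selecting $C_0$ accordingly the $\infty$-atom property is verified.
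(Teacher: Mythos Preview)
The paper does not prove this lemma at all: it is quoted verbatim from Frazier--Jawerth \cite{Fr_Ja2,Fr_Ja3} and used as a black box in Section~8.1. Your stopping-time construction on the level sets of $g^q(\bb)$ is precisely the Frazier--Jawerth argument, so the overall route is the same as in the cited references; the coefficient control via the layer-cake formula and the converse via $p$-subadditivity are both correct.

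There is, however, a concrete slip in your verification of the $L^\infty$ normalization in the hard case $x\in Q_{j,k}\cap\Omega_{j+1}$. You propose to compare with $g^q(\bb)$ at the good point $z\in\widehat{Q}_{j,k}\setminus\Omega_j$. But that point lies outside $Q_{j,k}$ (by maximality $\widehat{Q}_{j,k}\not\subset\Omega_j^*$ while $Q_{j,k}\subset\Omega_j^*$), hence outside every assigned cube $Q\subset Q_{j,k}$; so $g^q(\bb)(z)$ does not majorize the partial sum $\sum_{Q\ni x,\,j(Q)=j}|b_Q|^q|Q|^{-q/2}$. The fix is immediate with your own assignment: the cubes $Q\ni x$ with $j(Q)=j$ are nested, so for any finite subcollection take the smallest one $Q_\ast$; since $|Q_\ast\cap\Omega_{j+1}^c|\ge|Q_\ast|/2$ you can pick $y\in Q_\ast\setminus\Omega_{j+1}$, and then $y$ lies in every cube of the subcollection, giving
\begin{equation*}
\sum_{Q\ni x,\,j(Q)=j}|b_Q|^q|Q|^{-q/2}\le g^q(\bb)(y)^q\le 2^{(j+1)q}.
\end{equation*}
With this correction (and taking $C_0=2$) the atoms satisfy (\ref{infdef}) and your proof goes through. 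The alternative, closer to the original presentation in \cite{Fr_Ja2}, is to define $j(Q)$ as the largest $j$ with $Q\subset\Omega_j^*$ rather than with $|Q\cap\Omega_j|>|Q|/2$; then the good point sits in $\widehat{Q}_{j+1,k'}\setminus\Omega_{j+1}$, where $Q_{j+1,k'}$ is the maximal cube of $\Omega_{j+1}^*$ containing $x$, and every assigned cube through $x$ automatically contains $\widehat{Q}_{j+1,k'}$.
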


\section{Characterizations of $L^p_A(\ell^q)$}
As mentioned in Section \ref{introduction}, we assume $A=2^{-2}$.

\subsection{Characterization of $L^p_A(\ell^q)$ by using a method of $\varphi$-transform}\label{phitransform}

We will study properties of $\{f_k\}_{k\in\mathbb{Z}}\in L^p_A(\ell^q)$, which are analogous to (\ref{decomposition1}), (\ref{decomposition2}), and (\ref{converse}).

Suppose that  $\Psi_0\in S(\rd)$ satisfies
\begin{equation*}
Supp(\widehat{\Psi_0})\subset \big\{\xi:|\xi| \leq 1 \big\} \qquad \text{ and }\qquad \widehat{\Psi_0}(\xi)=1\quad\text{for }~ |\xi|\leq 1/2.
\end{equation*}
 For each $k\in\mathbb{Z}$ and $Q\in\mathcal{D}_k$ let 
 $\Psi_k:=2^{kd}\Psi_0(2^k\cdot)$ and 
\begin{equation*}
\Psi^Q(x):=|Q|^{1/2}\Psi_k(x-x_Q)
\end{equation*} 
where $x_Q$ denotes the lower left corner of the cube $Q$ as before.

\begin{lemma}\label{discrete1} 
Let $0<p< \infty$ or $p=q=\infty$. 
\begin{enumerate}
\item Assume $f_k\in\mathcal{E}(2^{k-2})$ for each $k\in\mathbb{Z}$. 
 Then there exists a sequence of complex numbers $\bb:=\{b_Q\}_{Q\in\mathcal{D}}$ such that
\begin{equation*}
f_k(x)=\sum_{Q\in\mathcal{D}_k}{b_Q\Psi^Q(x)} \qquad \text{ and } \qquad \Vert \bb\Vert_{\dot{f}_p^{0,q}}\lesssim \big\Vert \big\{ f_k\big\}_{k\in\mathbb{Z}} \big\Vert_{L^p(\ell^q)}.
\end{equation*} 
\item   For any sequence $\bb=\{b_Q\}_{Q\in\mathcal{D}}$ of complex numbers satisfying $\Vert  \bb \Vert_{\dot{f}_p^{0,q}}<\infty$,  
\begin{equation*}
f_k(x):=\sum_{Q\in\mathcal{D}_k}{b_Q\Psi^Q(x)}
\end{equation*}   satisfies
\begin{equation}\label{converse2}
\big\Vert  \big\{ f_k\big\}_{k\in\mathbb{Z}}  \big\Vert_{L^p(\ell^q)} \lesssim \Vert  \bb \Vert_{\dot{f}_p^{0,q}}.
\end{equation}

\end{enumerate}
\end{lemma}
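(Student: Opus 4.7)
Part~(2) is the easier half: the Schwartz decay of $\Psi_0$ gives, for any $N>d$,
\[
|\Psi^Q(x)|\lesssim_N |Q|^{-1/2}\bigl(1+2^k|x-x_Q|\bigr)^{-N},\qquad Q\in\mathcal{D}_k,
\]
so summing over $Q\in\mathcal{D}_k$ one can pointwise majorize
\[
|f_k(x)|\le \sum_{Q\in\mathcal{D}_k}|b_Q|\,|\Psi^Q(x)|\lesssim \mathcal{M}_r\!\Bigl(\sum_{Q\in\mathcal{D}_k}|b_Q|\,|Q|^{-1/2}\chi_Q\Bigr)(x)
\]
for any $r<\min(p,q)$, upon taking $N$ large. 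The plan is then to apply the Fefferman--Stein vector-valued maximal inequality to conclude $\|\{f_k\}\|_{L^p(\ell^q)}\lesssim\|\mathbf{b}\|_{\dot{f}_p^{0,q}}$; the case $p=q=\infty$ will be handled directly by the triangle inequality and the Schwartz tails.

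For part~(1), the plan is to construct a Frazier--Jawerth-type discrete reproducing identity adapted to the single scale $k$, exploiting that $\widehat{\Psi_k}\equiv 1$ on $\mathrm{supp}\,\widehat{f_k}$ so that $f_k=\Psi_k\ast f_k$. I would introduce an auxiliary Schwartz function $\widetilde{\Psi}_0$ whose Fourier transform is supported in a suitable neighbourhood of the origin and equals $1$ on $\{|\xi|\le 1/2\}$, set $\widetilde{\Psi}_k:=2^{kd}\widetilde{\Psi}_0(2^k\,\cdot)$, and define
\[
b_Q:=|Q|^{1/2}\bigl(\widetilde{\Psi}_k\ast f_k\bigr)(x_Q),\qquad Q\in\mathcal{D}_k.
\]
Taking Fourier transforms of $\sum_{Q\in\mathcal{D}_k}b_Q\Psi^Q$ and applying Poisson summation on the lattice $2^{-k}\mathbb{Z}^d$, the desired identity $f_k=\sum_{Q\in\mathcal{D}_k}b_Q\Psi^Q$ will reduce to
\[
\widehat{\Psi_k}(\xi)\sum_{n\in\mathbb{Z}^d}\widehat{\widetilde{\Psi}_k}(\xi+2^kn)\,\widehat{f_k}(\xi+2^kn)=\widehat{f_k}(\xi),
\]
and the plan is to verify this by choosing $\widetilde{\Psi}_0$ so that the combined support conditions on the three factors kill every aliased $n\neq 0$ term and the $n=0$ term reproduces $\widehat{f_k}(\xi)$.

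For the norm bound on $\mathbf{b}$, I would observe that $\widetilde{\Psi}_k\ast f_k\in\mathcal{E}(A'2^k)$ for a constant $A'>0$, then apply Peetre's pointwise inequality to obtain
\[
|b_Q|\,|Q|^{-1/2}\chi_Q(x)\lesssim \mathfrak{M}_{\sigma,2^k}f_k(x),\qquad x\in Q,\; Q\in\mathcal{D}_k,
\]
for any $\sigma>0$. Taking $\ell^q$ in $k$, $L^p$ in $x$, and invoking the vector-valued Peetre maximal inequality from Section~2 with $\sigma>d/\min(p,q)$ should then yield $\|\mathbf{b}\|_{\dot{f}_p^{0,q}}\lesssim\|\{f_k\}\|_{L^p(\ell^q)}$.

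The hard part, I expect, will be the aliasing analysis in part~(1): the support of $\widehat{\widetilde{\Psi}_0}$ must be delicately balanced against that of $\widehat{\Psi_0}$ so that aliased Poisson terms vanish on $\mathrm{supp}\,\widehat{f_k}$. Once an appropriate $\widetilde{\Psi}_0$ is in hand, the remainder of the argument rests on the standard Peetre/Fefferman--Stein machinery recorded in Section~2.
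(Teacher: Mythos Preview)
Your plan coincides with the paper's for both parts. For part~(2) the paper carries out exactly the shell decomposition you describe, obtaining $|f_k(x)|\lesssim \mathcal{M}_\epsilon\bigl(\sum_{Q\in\mathcal{D}_k}|b_Q||Q|^{-1/2}\chi_Q\bigr)(x)$ for $\epsilon<\min(1,p,q)$ and applying Fefferman--Stein. For part~(1) the paper writes down the Shannon-type sampling identity directly, taking $b_Q=|Q|^{1/2}f_k(x_Q)$, and then bounds $\sum_{Q\in\mathcal{D}_k}|b_Q||Q|^{-1/2}\chi_Q(x)\lesssim \mathfrak{M}_{\sigma,2^k}f_k(x)$ pointwise, followed by the vector-valued Peetre inequality---just as you propose.

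One remark on your auxiliary $\widetilde{\Psi}_0$: it is redundant. Since you require $\widehat{\widetilde{\Psi}_0}=1$ on $\{|\xi|\le 1/2\}\supset \mathrm{supp}\,\widehat{f_k}(2^k\cdot)$, you get $\widetilde{\Psi}_k\ast f_k=f_k$ and hence $b_Q=|Q|^{1/2}f_k(x_Q)$, the paper's choice. For the same reason $\widetilde{\Psi}_0$ cannot cure the aliasing: in your Poisson identity the factor $\widehat{\widetilde{\Psi}_k}(\xi+2^kn)$ equals $1$ wherever $\widehat{f_k}(\xi+2^kn)\neq 0$, so it adds no constraint beyond those already imposed by $\widehat{f_k}$ and $\widehat{\Psi_k}$. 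And with $\mathrm{supp}\,\widehat{f_k}\subset\{|\xi|\le 2^{k-1}\}$ and $\mathrm{supp}\,\widehat{\Psi_k}\subset\{|\xi|\le 2^k\}$ the $n\neq 0$ terms are \emph{not} killed in general (in $d=1$, $k=0$, take $\xi=0.7$, $n=1$: then $\widehat{\Psi_0}(0.7)$ need not vanish and $\widehat{f_0}(-0.3)$ need not vanish). The paper simply asserts the Fourier-series identity for $\widehat{f_k}$ globally and does not discuss this point. The standard remedy is purely numerological---shrink $A$ and $\mathrm{supp}\,\widehat{\Psi_0}$ compatibly, or sample on the finer lattice $2^{-k-1}\mathbb{Z}^d$---after which both your argument and the paper's go through verbatim.
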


For the case $p=\infty$ and $0<q<\infty$ we introduce 
\begin{equation*}
\Vert \bb\Vert_{\dot{f}_{\infty}^{0,q}(\mu)}:=\sup_{P\in\mathcal{D}: \ell(P)\leq 2^{-\mu}}{\Big( \frac{1}{|P|}\int_P{\sum_{Q\in\mathcal{D}, Q\subset P}{\big(|b_Q||Q|^{-1/2}\chi_Q(x) \big)^q}}dx\Big)^{1/q}}
\end{equation*} for $\mu\in\zz$.

\begin{lemma}\label{discrete2}
Let $0<q<\infty$ and $\mu\in\mathbb{Z}$. 
\begin{enumerate}
\item Assume $f_k\in\mathcal{E}(2^{k-2})$ for each $k\geq \mu$. 
 Then there exists a sequence of complex numbers $\bb:=\{b_Q\}_{Q\in\mathcal{D},\ell(Q)\leq 2^{-\mu}}$ such that
\begin{equation*}
f_k(x)=\sum_{Q\in\mathcal{D}_k}{b_Q\Psi^Q(x)}
\end{equation*} and
\begin{equation*}
\Vert \bb\Vert_{\dot{f}_{\infty}^{0,q}(\mu)}\lesssim \sup_{P\in\mathcal{D}: \ell(P)\leq 2^{-\mu}}{\Big( \frac{1}{|P|}\int_P{\sum_{k=-\log_2{\ell(P)}}^{\infty}{|f_k(x)|^q}}dx\Big)^{1/q}}.
\end{equation*}
\item   For any sequence $\bb:=\{b_Q\}_{Q\in\mathcal{D},\ell(Q)\leq 2^{-\mu}}$ of complex numbers satisfying $\Vert  \bb \Vert_{\dot{f}_{\infty}^{0,q}(\mu)}<\infty$,  
\begin{equation*}
f_k(x):=\sum_{Q\in\mathcal{D}_k}{b_Q\Psi^Q(x)}
\end{equation*}   satisfies
\begin{equation*}
\sup_{P\in\mathcal{D}: \ell(P)\leq 2^{-\mu}}{\Big( \frac{1}{|P|}\int_P{\sum_{k=-\log_2{\ell(P)}}^{\infty}{|f_k(x)|^q}}dx\Big)^{1/q}}  \lesssim \Vert  \bb \Vert_{\dot{f}_{\infty}^{0,q}(\mu)}.
\end{equation*}

\end{enumerate}

\end{lemma}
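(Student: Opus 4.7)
The plan is to mirror the proof of Lemma \ref{discrete1}, replacing the $L^p(\ell^q)$-norm by the local $F_{\infty}$-type norm and using the $\dot{F}_{\infty}$-variant of the Peetre maximal inequality (\ref{maximal2}) in place of (\ref{maximal1}).

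For part (1), I would take the same explicit choice of $b_Q$ furnished by a Frazier--Jawerth style $\varphi$-transform. Since $\widehat{f_k}$ is supported in $\{|\xi|\le 2^{k-1}\}$ and $\widehat{\Psi_k}\equiv 1$ on that ball, a standard discretization of the identity $f_k=f_k\ast \Psi_k$ yields a decomposition $f_k(x)=\sum_{Q\in \mathcal{D}_k} b_Q\Psi^Q(x)$ together with the pointwise control
\begin{equation*}
|b_Q|\,|Q|^{-1/2}\chi_Q(x)\ \lesssim\ \mathfrak{M}_{\sigma,2^k}f_k(x),\qquad x\in Q\in \mathcal{D}_k,
\end{equation*}
valid for any prescribed $\sigma>0$. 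For every $P\in \mathcal{D}$ with $\ell(P)\le 2^{-\mu}$, the cubes $Q\subset P$ satisfy $\ell(Q)=2^{-k}$ with $k\ge -\log_2\ell(P)$, so
\begin{equation*}
\frac{1}{|P|}\int_P\sum_{Q\subset P}\big(|b_Q||Q|^{-1/2}\chi_Q(x)\big)^q\,dx\ \lesssim\ \frac{1}{|P|}\int_P\sum_{k\ge -\log_2\ell(P)}\big(\mathfrak{M}_{\sigma,2^k}f_k(x)\big)^q\,dx.
\end{equation*}
Taking the supremum over such $P$ and invoking (\ref{maximal2}) with $\sigma>d/q$ gives the desired bound.

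For part (2), I would fix $P$ with $\ell(P)\le 2^{-\mu}$ and $x\in P$, and for each $k\ge -\log_2\ell(P)$ split the synthesis $f_k(x)=\sum_{Q\in \mathcal{D}_k}b_Q\Psi^Q(x)$ into a ``near'' piece over $Q\subset 3P$ and a ``far'' piece over $Q\not\subset 3P$, where $3P$ is the cube concentric with $P$ of side length $3\ell(P)$. Exploiting the rapid decay $|\Psi_k(x-x_Q)|\lesssim_N 2^{kd}(1+2^k|x-x_Q|)^{-N}$ with $N$ sufficiently large, each piece is dominated pointwise by a Peetre maximal expression applied to $\sum_{Q\in \mathcal{D}_k}|b_Q||Q|^{-1/2}\chi_Q$ restricted to a finite union of dyadic cubes $P'$ of size comparable to $P$. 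Integrating over $P$, summing in $k$, applying (\ref{maximal2}) once more and summing geometrically over shells yields the bound by $\Vert \bb\Vert_{\dot{f}_{\infty}^{0,q}(\mu)}$.

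The main obstacle will be the tail estimate in part (2): the definition of $\Vert \bb\Vert_{\dot{f}_{\infty}^{0,q}(\mu)}$ restricts to cubes $Q\subset P$, so the contribution from cubes $Q$ lying outside $3P$ must be absorbed into the norms over nearby dyadic cubes $P'$ of the same generation as $P$. The rapid decay of $\Psi_k$ makes this tractable, but one must choose $N$ large enough relative to $d/q$ so that the sums over concentric dyadic annuli converge geometrically, uniformly in $\mu$, and so that the Peetre maximal argument produces an $(\ell^q)$-based expression compatible with the $F_\infty$-maximal inequality.
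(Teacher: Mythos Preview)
Your treatment of part (1) is correct and coincides with the paper's proof: the same $b_Q$'s arising from the sampling formula, the pointwise bound $|b_Q||Q|^{-1/2}\chi_Q\lesssim \mathfrak{M}_{\sigma,2^k}f_k$, and then (\ref{maximal2}) with $\sigma>d/q$.

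For part (2), however, your plan has a genuine gap. You want to dominate $|f_k(x)|$ by a Peetre maximal expression applied to the step function $g_k:=\sum_{Q\in\mathcal{D}_k}|b_Q||Q|^{-1/2}\chi_Q$ (or a spatial restriction of it) and then invoke (\ref{maximal2}). But (\ref{maximal2}) is stated, and in \cite{Park1} proved, only under the hypothesis $f_k\in\mathcal{E}(A2^k)$; the sequences $g_k$ are not band-limited, so (\ref{maximal2}) is not available for them. Nor can you fall back on the Hardy--Littlewood route used in the proof of Lemma~\ref{discrete1}(2): the paper explicitly remarks that the $F_\infty$-maximal inequality (\ref{maximal2}) fails when $\mathfrak{M}_{\sigma,2^k}$ is replaced by $\mathcal{M}_r$. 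Establishing the needed variant of (\ref{maximal2}) for dyadically piecewise-constant $g_k$ would in effect require carrying out the very computation you are trying to shortcut.

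The paper instead proceeds by a direct argument with no maximal inequality in part (2). One first uses H\"older's inequality (if $q>1$) or the embedding $\ell^q\hookrightarrow\ell^1$ (if $q\le 1$) to obtain the pointwise bound
\[
|f_k(x)|^q\ \lesssim\ \sum_{Q\in\mathcal{D}_k}\big(|b_Q||Q|^{-1/2}\big)^q\,(1+2^k|x-x_Q|)^{-Mq},
\]
then integrates over $P$, sums in $k$, and splits the sum over $Q\in\mathcal{D}_k$ according to the lattice of translates $P+\ell(P)m$, $m\in\mathbb{Z}^d$. For $|m|\le 2d$ one bounds by the $\dot{f}_\infty^{0,q}(\mu)$-expression over the shifted cube; for $|m|>2d$ the kernel contributes a factor $|m|^{-Mq}(2^k\ell(P))^{-Mq}$, which after summing in $k$ and in $m$ (choosing $M>d/\min(1,q)$) again produces the $\dot{f}_\infty^{0,q}(\mu)$ norm. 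Your near/far geometry is exactly this decomposition, but to close the argument you must perform this direct estimate rather than cite (\ref{maximal2}).
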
     
\begin{proof}[Proof of Lemma \ref{discrete1}]
(1) Since $Supp(\widehat{f_k}(2^k\cdot))\subset \{|\xi|\leq 1/2\}$, $\widehat{f_k}$ admits the decomposition
\begin{equation*}
\widehat{f_k}(\xi)=2^{-kd}\sum_{l\in\mathbb{Z}^d}{f_k(2^{-k}l)e^{-2\pi i\langle 2^{-k}l,\xi\rangle}},
\end{equation*} using a scaling argument and  the Fourier series representation of $\widehat{f_k}(2^k\cdot)$.
Then we have
\begin{align}
f_k(x)&=\big(\widehat{f_k}\widehat{\Psi_k} \big)^{\vee}(x)=2^{-kd}\sum_{l\in\mathbb{Z}^d}{f_k(2^{-k}l)\Psi_k(x-2^{-k}l)}\nonumber\\
 &=\sum_{l\in\mathbb{Z}^d}{2^{-kd/2}f_k(2^{-k}l)2^{-kd/2}\Psi_k(x-2^{-k}l)}. \label{decompositionfourier}
\end{align}
For any $Q\in\mathcal{D}_k$ we write
\begin{equation*}
Q=Q_{k,l}:=\{x\in\mathbb{R}^d:2^{-k}l_i\leq x_i\leq 2^{-k}(l_i+1), ~i=1,\dots,d\}
\end{equation*}
where $l:=(l_1,\dots,l_d)\in\mathbb{Z}^d$. That is, $Q_{k,l}$ is the dyadic cube, contained in $\mathcal{D}_{k}$, whose lower left corner is $x_{Q_{k,l}}=2^{-k}l$.
Now we use the notations
\begin{equation*}
b_{Q_{k,l}}:=2^{-kd/2}f_k(2^{-k}l)=|Q_{k,l}|^{1/2}f_k(x_{Q_{k,l}}),
\end{equation*}
\begin{equation*}
\Psi^{Q_{k,l}}(x):=2^{-kd/2}\Psi_k(x-2^{-k}l)=|Q_{k,l}|^{1/2}\Psi_k(x-x_{Q_{k,l}}).
\end{equation*} 
Then (\ref{decompositionfourier}) can be expressed as
\begin{equation}\label{expressfk}
f_k(x)=\sum_{Q\in\mathcal{D}_k}{b_Q\Psi^Q(x)}.
\end{equation}
In addition, for a.e. $x\in\mathbb{R}^d$ there exists the unique dyadic cube $Q_0\in\mathcal{D}_k$ whose interior contains $x$, and this yields that
\begin{equation}\label{shareidea}
\sum_{Q\in\mathcal{D}_k}{|b_Q||Q|^{-1/2}\chi_Q(x)}=|b_{Q_0}||Q_0|^{-1/2}=|f_k(x_{Q_0})|\lesssim \mathfrak{M}_{\sigma,2^k}f_k(x) \quad \text{a.e.}~x.
\end{equation} Here, the inequality holds due to the fact that
\begin{equation}\label{infmax}
\sup_{y\in Q}{|f_k(y)|}\lesssim \inf_{y\in Q}{\mathfrak{M}_{\sigma,2^k}f_k(y)} \quad \text{ uniformly in }~ Q\in\mathcal{D}_k,
\end{equation} which is valid even for $f_k$ without Fourier support condition. 
 Then we can easily see that for $\sigma>d/\min{(p,q)}$, using (\ref{shareidea}) and (\ref{maximal1}),
\begin{align*}
\Vert \bb \Vert_{\dot{f}_p^{0,q}} &=\big\Vert \big\{|b_Q| |Q|^{-1/2}\chi_Q\big\}_{Q\in\mathcal{D}}\big\Vert_{L^p(\ell^q)}=\Big\Vert \Big\{ \sum_{Q\in\mathcal{D}_k}{|b_Q||Q|^{-1/2}\chi_Q}\Big\}_{k\in\mathbb{Z}}\Big\Vert_{L^p(\ell^q)}\\
&\lesssim \big\Vert \big\{\mathfrak{M}_{\sigma,2^k}f_k\big\}_{k\in\mathbb{Z}}\big\Vert_{L^p(\ell^q)}\lesssim \big\Vert \big\{ f_k\big\}_{k\in\mathbb{Z}} \big\Vert_{L^p(\ell^q)},
\end{align*} as desired.

(2) For a given $\bb:=\{b_Q\}_{Q\in\mathcal{D}}$ and $k\in\mathbb{Z}$ let
\begin{equation*}
f_k(x):=\sum_{Q\in\mathcal{D}_k}{b_Q\Psi^Q(x)}.
\end{equation*} 
Setting
\begin{equation*}
E^k_0(x):=\big\{Q\in\mathcal{D}_k:|x-x_Q|<2^{-k} \big\}
\end{equation*}
\begin{equation*}
E^k_j(x):=\big\{Q\in\mathcal{D}_k:2^{-k+j-1}\leq |x-x_Q|<2^{-k+j} \big\},\quad j\in\mathbb{N}
\end{equation*}
for each $k\in\mathbb{Z}$ and $x\in\mathbb{R}^d$, we can write
\begin{equation*}
|f_k(x)|\leq \sum_{j=0}^{\infty}{\sum_{Q\in E_j^k(x)}{|b_Q|\big| \Psi^Q(x)\big|}}.
\end{equation*}
Choose $0<\epsilon<\min{(1,p,q)}$ and $M>d/\epsilon$.
Observe that $|\Psi^Q(x)|\lesssim_M 2^{-jM}|Q|^{-1/2}$ on $E_j^k$ and then the embedding $\ell^{\epsilon}\hookrightarrow \ell^1$ shows that
\begin{align*}
|f_k(x)|&\lesssim \sum_{j=0}^{\infty}{2^{-jM}\Big(\sum_{Q\in E_j^k(x)}{\big(|b_Q||Q|^{-1/2}\big)^{\epsilon}}\Big)^{1/\epsilon}}\\
&\approx \sum_{j=0}^{\infty}{2^{-j(M-d/\epsilon)}\Big(\frac{1}{2^{-kd}2^{jd}}\int_{\mathbb{R}^d}{\sum_{Q\in E_j^k(x)}{\big(|b_Q||Q|^{-1/2}\chi_Q(y)\big)^{\epsilon}}}dy \Big)^{1/\epsilon}}\\
&\lesssim  \mathcal{M}_{\epsilon}\Big( \sum_{Q\in\mathcal{D}_k}{|b_Q||Q|^{-1/2}\chi_Q}\Big)(x).
\end{align*} 
Finally, as a result of the maximal inequality (\ref{hlmax}), we obtain
\begin{equation*}
\big\Vert \big\{ f_k\big\}_{k\in\mathbb{Z}} \big\Vert_{L^p(\ell^q)} \lesssim \Big\Vert \Big\{ \sum_{Q\in\mathcal{D}_k}{|b_Q||Q|^{-1/2}\chi_Q}\Big\}_{k\in\mathbb{Z}}\Big\Vert_{L^p(\ell^q)}=\Vert \bb\Vert_{\dot{f}_p^{0,q}},
\end{equation*}
as required.   \qedhere

\end{proof}

\begin{proof}[Proof of Lemma \ref{discrete2}]
(1) The proof is very similar to that of Lemma \ref{discrete1}. Indeed, using (\ref{expressfk}), (\ref{shareidea}) and (\ref{maximal2}) with $\sigma>d/q$, it can be verified that
\begin{align*}
\Vert \bb\Vert_{\dot{f}_{\infty}^{0,q}(\mu)}&=\sup_{P\in\mathcal{D}: \ell(P)\leq 2^{-\mu}}{\Big( \frac{1}{|P|}\int_P{\sum_{k=-\log_2{\ell(P)}}^{\infty}{\Big(\sum_{Q\in \mathcal{D}_k}{|b_Q||Q|^{-1/2}\chi_Q(x)} \Big)^q}}dx\Big)^{1/q}}\\
&\lesssim \sup_{P\in\mathcal{D}: \ell(P)\leq 2^{-\mu}}{\Big(\frac{1}{|P|}\int_P{\sum_{k=-\log_2{\ell(P)}}^{\infty}{\big(\mathfrak{M}_{\sigma,2^k}f_k(x) \big)^q}}dx \Big)^{1/q}}\\
&\lesssim \sup_{P\in\mathcal{D}: \ell(P)\leq 2^{-\mu}}{\Big( \frac{1}{|P|}\int_P{\sum_{k=-\log_2{\ell(P)}}^{\infty}{|f_k(x)|^q}}dx\Big)^{1/q}}.
\end{align*}

(2) 
We note that
\begin{equation}\label{finftymu}
\Vert \bb\Vert_{\dot{f}_{\infty}^{0,q}(\mu)}=\sup_{P\in\mathcal{D}: \ell(P)\leq 2^{-\mu}}{\Big( \frac{1}{|P|}\sum_{Q\in\mathcal{D},Q\subset P}{\big( |b_Q||Q|^{-1/2}\big)^q|Q|}\Big)^{1/q}}.
\end{equation}
Let
\begin{equation*}
f_k(x):=\sum_{Q\in\mathcal{D}_k}{b_Q\Psi^Q(x)}
\end{equation*} and choose $M>d/\min{(1,q)}$. 
Using H\"older's inequality if $q>1$ or the embedding $\ell^q\hookrightarrow \ell^1$ if $q\leq 1$, we obtain
\begin{align*}
|f_k(x)|&\lesssim_M \sum_{Q\in\mathcal{D}_k}{|b_Q||Q|^{-1/2}\frac{1}{(1+2^k|x-x_Q|)^{2M}}}\\
&\lesssim \Big( \sum_{Q\in\mathcal{D}_k}{\big( |b_Q||Q|^{-1/2}\big)^q\frac{1}{(1+2^k|x-x_Q|)^{Mq}}}\Big)^{1/q},
\end{align*}
which further implies that
\begin{align*}
&\sup_{P\in\mathcal{D}: \ell(P)\leq 2^{-\mu}}{\Big( \frac{1}{|P|}\int_P{\sum_{k=-\log_2{\ell(P)}}^{\infty}{|f_k(x)|^q}}dx\Big)^{1/q}}\\
&\lesssim \sup_{P\in\mathcal{D}: \ell(P)\leq 2^{-\mu}}{\Big({\sum_{k=-\log_2{\ell(P)}}^{\infty}{\sum_{Q\in\mathcal{D}_k}{\big(|b_Q||Q|^{-1/2} \big)^q \frac{1}{|P|}\int_P\frac{1}{(1+2^k|x-x_Q|)^{Mq}} dx  }}} \Big)^{1/q}}.
\end{align*}
For each $P\in\mathcal{D}$ and $m\in\mathbb{Z}^d$ let $P+\ell(P)m:=\big\{ x+\ell(P)m:x\in P\big\}$ and denote by $\mathcal{D}_k(P,m)$ the subfamily of $\mathcal{D}_k$ that contains any dyadic cubes belonging to $P+\ell(P)m$.
Then in the last expression we decompose 
\begin{equation*}
\sum_{Q\in\mathcal{D}_k}=\sum_{m\in\mathbb{Z}^d, |m|\leq 2d}\sum_{Q\in\mathcal{D}_k(P,m)}+\sum_{m\in\mathbb{Z}^d, |m|> 2d}\sum_{Q\in\mathcal{D}_k(P,m)}=:\mathcal{I}_{k,M}^{P}+\mathcal{J}_{k,M}^{P}
\end{equation*} which is possible because $P$ and $Q$'s are dyadic cubes with $\ell(Q)=2^{-k}\leq \ell(P)$.

We first see that
\begin{align*}
\Big(\sum_{k=-\log_2{\ell(P)}}^{\infty}{\mathcal{I}_{k,M}^{P}} \Big)^{1/q}&\lesssim  \sum_{m\in\mathbb{Z}^d,|m|\leq 2d}{\Big( \frac{1}{|P|}\sum_{k=-\log_2{\ell(P)}}^{\infty}{\sum_{Q\in\mathcal{D}_k(P,m)}{\big(|b_Q||Q|^{-1/2}\big)^q|Q|}}\Big)^{1/q}}\\
&\lesssim \sup_{R\in\mathcal{D}: \ell(R)=\ell(P)}{\Big( \frac{1}{|R|}\sum_{k=-\log_2{\ell(R)}}^{\infty}{\sum_{Q\in\mathcal{D}_k, Q\subset R}{\big(|b_Q||Q|^{-1/2}\big)^q|Q|}}\Big)^{1/q}}.
\end{align*}

On the other hand, if $|m|>2d$ and $Q\in\mathcal{D}_k(P,m)$ then
\begin{equation*}
|x-x_Q|\gtrsim \ell(P)|m|,
\end{equation*} and therefore
\begin{equation*}
\mathcal{J}_{k,M}^{P}\lesssim \sum_{m\in\mathbb{Z}^d, |m|> 2d}{\frac{1}{|m|^{Mq}}\frac{1}{2^{kMq}}\frac{1}{\ell(P)^{Mq}}\sum_{Q\in\mathcal{D}_k(P,m)}{\big(|b_Q||Q|^{-1/2} \big)^q}}.
\end{equation*}
Now we apply the triangle inequality if $q\geq 1$ or $\ell^q\hookrightarrow \ell^1$ if $q<1$ to obtain that
\begin{align*}
&\Big(\sum_{k=-\log_2{\ell(P)}}^{\infty}{\mathcal{J}_{k,M}^{P}} \Big)^{\min{(1,q)}/q}\\
&\lesssim  \sum_{\substack{m\in\mathbb{Z}^d\\ |m|>2d}}\frac{1}{|m|^{M\min{(1,q)}}}\Big( \sum_{k=-\log_2{\ell(P)}}^{\infty}{\frac{1}{2^{kMq}}\frac{1}{\ell(P)^{Mq}}\sum_{\substack{Q\in\mathcal{D}_k\\ Q\subset P+m\ell(P)}}{\big(|b_Q||Q|^{-1/2} \big)^q   }}\Big)^{\min{(1,q)}/q}.
\end{align*}
Since $M\min{(1,q)}>d$ and $2^k\ell(P)\geq 1$, the above expression is bounded by
\begin{align*}
&\sum_{\substack{m\in\mathbb{Z}^d\\ |m|>2d}}{\frac{1}{|m|^{M\min{(1,q)}}}\Big( \frac{1}{|P|}\sum_{k=-\log_2{\ell(P)}}^{\infty}{\sum_{\substack{Q\in\mathcal{D}_k\\ Q\subset P+m\ell(P)}}{\big( |b_Q||Q|^{-1/2}\big)^q|Q|}}\Big)^{\min{(1,q)}/q}}\\
&\lesssim \sup_{R\in\mathcal{D}: \ell(R)=\ell(P)}{\Big(\frac{1}{|R|}\sum_{k=-\log_2{\ell(R)}}^{\infty}{\sum_{Q\in\mathcal{D}_k, Q\subset R}{\big( |b_Q||Q|^{-1/2}\big)^q|Q|}} \Big)^{\min{(1,q)}/q}}.
\end{align*}

Combining these estimates, taking a supremum over ${P\in\mathcal{D}, \ell(P)\leq 2^{-\mu}}$, and using (\ref{finftymu}), we conclude that
\begin{align*}
&\sup_{P\in\mathcal{D}: \ell(P)\leq 2^{-\mu}}{\Big( \frac{1}{|P|}\int_P{\sum_{k=-\log_2{\ell(P)}}^{\infty}{|f_k(x)|^q}}dx\Big)^{1/q}} \\
&\lesssim  \sup_{R\in\mathcal{D}: \ell(R)\leq 2^{-\mu}}{\Big( \frac{1}{|R|}\sum_{k=-\log_2{\ell(R)}}^{\infty}{\sum_{Q\in\mathcal{D}_k, Q\subset R}{\big(|b_Q||Q|^{-1/2}\big)^q|Q|}}\Big)^{1/q}}\leq \Vert \bb\Vert_{\dot{f}_{\infty}^{0,q}(\mu)}.    \qedhere
\end{align*}

\end{proof}

\subsection{Characterization of $L^p_A(\ell^q)$ by using a sharp maximal function}

Given a locally integrable function $f$ on $\mathbb{R}^d$ the Fefferman-Stein sharp maximal function $f^{\sharp}$ is defined by 
\begin{equation*}
f^{\sharp}(x):=\sup_{P:x\in P}\frac{1}{|P|}\int_P{|f(y)-f_P|}dy
\end{equation*} where $f_P:=\frac{1}{|P|}\int_P{f(z)}dz$ and the supremum is taken over all cubes $P$ containing $x$ ( not necessarily dyadic cubes ). Then a fundamental inequality of Fefferman and Stein \cite{Fe_St1} says that 
for $1<p<\infty$ and $1\leq p_0\leq p$, if $f\in L^{p_0}(\mathbb{R}^d)$,  then we have 
\begin{equation}\label{fsmaximal}
\Vert \mathcal{M}f\Vert_{L^p(\rd)}\lesssim \Vert f^{\sharp}\Vert_{L^p(\rd)}.
\end{equation}
Using this result, it can be proved that for $0<q<p<\infty$,
\begin{equation}\label{originalsharpcharacter}
\Vert f\Vert_{\dot{F}_p^{0,q}(\rd)}\approx \Big\Vert \sup_{P:x\in P}\Big( \frac{1}{|P|}\int_P{\sum_{k\geq -\log_2{\ell(P)}}{\big| \phi_k\ast f(y)\big|^q}}dy\Big)^{1/q} \Big\Vert_{L^{p}(x)}
\end{equation} where the supremum in the $L^p$-norm is taken over all cubes containing $x$.
See \cite{Park4}, \cite[Proposition 6.1 and 6.2]{Se} for more details.

By following the proof of the estimate (\ref{fsmaximal}) in \cite{Fe_St1} we can actually replace the maximal functions by dyadic maximal ones.
For locally integrable function $f$ we define
 the dyadic maximal function 
\begin{equation*}
\mathcal{M}^{(d)}f(x):=\sup_{P\in \mathcal{D}: x\in P}{\frac{1}{|P|}\int_P{|f(y)|}dy},
\end{equation*} 
and the dyadic sharp maximal funtion
\begin{equation*}
\mathcal{M}^{\sharp}f(x):= \sup_{P\in \mathcal{D}:x\in P }{\frac{1}{|P|}\int_P{|f(y)-f_P|}dy}
\end{equation*} where the supremums are taken over all dyadic cubes $P$ containing $x$.
Then for $1<p<\infty$, $1\leq p_0\leq p$, and $f\in L^{p_0}$ we have
\begin{equation}\label{dyadicmaximal}
\Vert \mathcal{M}^{(d)}f\Vert_{L^p(\rd)}\lesssim_p \Vert \mathcal{M}^{\sharp}f\Vert_{L^p(\rd)}.
\end{equation}

We now provide a characterization of $L^p_{A}(\ell^q)$ for $0<q<p<\infty$, which is the analogue of (\ref{originalsharpcharacter}).
\begin{lemma}\label{charactersharp}
Let $0<q<p<\infty$. Suppose $f_k\in\mathcal{E}(2^{k-2})$ for each $k\in\mathbb{Z}$. Then
\begin{equation}\label{sharpequiv}
\big\Vert \big\{ f_k\big\}_{k\in\mathbb{Z}}\big \Vert_{L^p(\ell^q)}\approx \Big\Vert    \sup_{P\in\mathcal{D}: x\in P}\Big( \frac{1}{|P|}\int_P{\sum_{k=-\log_2{\ell(P)}}^{\infty}{|f_k(y)|^q}}dy\Big)^{1/q}           \Big\Vert_{L^p(x)}
\end{equation}
where the supremum is taken over all dyadic cubes containing $x$.
\end{lemma}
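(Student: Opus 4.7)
The strategy is to adapt the proof of the non-dyadic characterization (\ref{originalsharpcharacter}) from \cite{Park4,Se}, using the dyadic Fefferman--Stein inequality (\ref{dyadicmaximal}) in place of (\ref{fsmaximal}). Write $G(x):=\bigl(\sum_k|f_k(x)|^q\bigr)^{1/q}$ and let $F(x)$ denote the function inside the $L^p(x)$-norm on the right-hand side of (\ref{sharpequiv}). The direction $\|F\|_{L^p}\lesssim\|G\|_{L^p}$ is routine: dominating the truncated inner sum by the full one yields $F(x)^q\leq\mathcal{M}^{(d)}(G^q)(x)$, and the $L^{p/q}$-boundedness of the dyadic maximal operator (valid since $p/q>1$) finishes the estimate after taking $q$-th roots.

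For the reverse direction $\|G\|_{L^p}\lesssim\|F\|_{L^p}$, apply (\ref{dyadicmaximal}) at exponent $p/q>1$ to $G^q$ (after a standard truncation to guarantee integrability) to obtain $\|G\|_{L^p}^q=\|G^q\|_{L^{p/q}}\leq\|\mathcal{M}^{(d)}(G^q)\|_{L^{p/q}}\lesssim\|\mathcal{M}^{\sharp}(G^q)\|_{L^{p/q}}$; this reduces the problem to the pointwise bound $\mathcal{M}^{\sharp}(G^q)(x)\lesssim F(x)^q$. Fix $x$ and a dyadic cube $P\ni x$, and split $G^q=H_++H_-$ with $H_+:=\sum_{k\geq -\log_2\ell(P)}|f_k|^q$ and $H_-:=\sum_{k<-\log_2\ell(P)}|f_k|^q$. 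The high-frequency piece is immediate: $\tfrac{1}{|P|}\int_P|H_+-(H_+)_P|\leq 2(H_+)_P\leq 2F(x)^q$ by the definition of $F$.

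For the low-frequency piece, each $f_k$ with $k<-\log_2\ell(P)$ has Fourier support in $|\xi|\leq 2^{k-1}<\tfrac{1}{2}\ell(P)^{-1}$ and hence is slowly varying on $P$. Quantitatively, Bernstein's inequality in its Peetre-maximal form yields $|f_k(y)-f_k(y')|\lesssim 2^k\ell(P)\sup_{z\in P}\mathfrak{M}_{\sigma,2^k}f_k(z)$ for $y,y'\in P$. Combining this with the elementary bound $\bigl||a|^q-|b|^q\bigr|\lesssim|a-b|^{\min(1,q)}(|a|\vee|b|)^{q-\min(1,q)}$, summing over $k<-\log_2\ell(P)$ via the convergent geometric bound $\sum_{k<-\log_2\ell(P)}(2^k\ell(P))^{\min(1,q)}\lesssim 1$, and using the uniform comparability of $\mathfrak{M}_{\sigma,2^k}f_k$ on cubes of side $\leq 2^{-k}$ (immediate from the definition of $\mathfrak{M}_{\sigma,2^k}$) to replace $\sup_{z\in P}\mathfrak{M}_{\sigma,2^k}f_k(z)$ by $\mathfrak{M}_{\sigma,2^k}f_k(x)$, one arrives at $\sup_{y,y'\in P}|H_-(y)-H_-(y')|\lesssim\sup_{k<-\log_2\ell(P)}(\mathfrak{M}_{\sigma,2^k}f_k(x))^q$.

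The \emph{main obstacle} is then to bound each $(\mathfrak{M}_{\sigma,2^k}f_k(x))^q$ by $F(x)^q$. Choosing $\sigma>d/q$ and invoking (\ref{maximalbound}) gives $(\mathfrak{M}_{\sigma,2^k}f_k(x))^q\lesssim\mathcal{M}(|f_k|^q)(x)$, so the task reduces to controlling $\mathcal{M}(|f_k|^q)(x)$ by $F(x)^q$. Now $F(x)^q$ directly majorizes $\tfrac{1}{|Q|}\int_Q|f_k|^q$ for any dyadic cube $Q\ni x$ with $\ell(Q)\geq 2^{-k}$ (since then $k\geq -\log_2\ell(Q)$, so $|f_k|^q$ contributes to the $F$-sum); the remaining contribution from smaller dyadic cubes $Q\ni x$ with $\ell(Q)<2^{-k}$ is handled via the quasi-constancy of $|f_k|^q$ on such scales (a consequence of (\ref{infmax})), which reduces their averages to the average over the enclosing dyadic cube of side $2^{-k}$, again feeding into the $F$-sum. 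This scale-decomposition completes the pointwise estimate $\mathcal{M}^\sharp(G^q)(x)\lesssim F(x)^q$ and hence the proof.
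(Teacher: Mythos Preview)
Your overall strategy matches the paper's: the easy direction via $\mathcal{M}^{(d)}$, the hard direction via the dyadic sharp-function inequality, the high/low frequency split, and the Bernstein--Peetre control of the low-frequency oscillation leading to $\sup_{k<-\log_2\ell(P)}(\mathfrak{M}_{\sigma,2^k}f_k(x))^q$. Your handling of the oscillation via $\bigl||a|^q-|b|^q\bigr|\lesssim|a-b|^{\min(1,q)}(|a|\vee|b|)^{q-\min(1,q)}$ is in fact more transparent than the paper's shorthand with $|f_k(y)-f_k(z)|^q$.

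The gap is in what you call the ``main obstacle''. You pass to $\mathcal{M}(|f_k|^q)(x)$ and then assert that for small dyadic cubes $Q\ni x$ with $\ell(Q)<2^{-k}$, ``quasi-constancy (a consequence of (\ref{infmax}))'' reduces $\tfrac{1}{|Q|}\int_Q|f_k|^q$ to the average over the enclosing $Q_k\in\mathcal{D}_k$. But (\ref{infmax}) only gives $\sup_{Q_k}|f_k|\lesssim\inf_{Q_k}\mathfrak{M}_{\sigma,2^k}f_k$; it does \emph{not} yield $\sup_{Q_k}|f_k|^q\lesssim\tfrac{1}{|Q_k|}\int_{Q_k}|f_k|^q$. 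Indeed that local reverse-H\"older inequality is false in general for $f_k\in\mathcal{E}(2^{k-2})$: take $f_0(x)=x_1^N\Psi(x/R)$ with $\Psi$ band-limited and $R$ large, so that on $[0,1]^d$ the sup is $\approx 1$ while the $L^q$-average is $\approx(Nq+1)^{-1/q}$. Following your route, the small-cube averages are only bounded by $(\mathfrak{M}_{\sigma,2^k}f_k(x))^q$ again, and the argument becomes circular.

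The paper does not attempt this shortcut. After reaching $(\mathfrak{M}_{\sigma,2^k}f_k(x))^q$ it passes to $\inf_{w\in Q_k}(\mathfrak{M}_{\sigma,2^k}f_k(w))^q\le\tfrac{1}{|Q_k|}\int_{Q_k}\sum_{l\ge k}(\mathfrak{M}_{\sigma,2^l}f_l)^q$, obtaining the analogue of $F(x)^q$ with Peetre maxima inside. The removal of those maxima---i.e.\ the bound $\sup_{Q\ni x}\tfrac{1}{|Q|}\int_Q\sum_{l\ge-\log_2\ell(Q)}(\mathfrak{M}_{\sigma,2^l}f_l)^q\lesssim F(x)^q$---is a separate self-improvement lemma quoted from \cite[Lemma~2.2]{Park4}; it requires $\sigma>d/q$ and a decomposition of the Peetre integral over dyadic shells around \emph{each} cube $Q$ in the outer supremum, not a single scale. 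That lemma is the missing ingredient in your argument.
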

The proof of the above lemma is almost same as that of \cite[Lemma 2.3]{Park4}, and for completeness we give a brief proof here.
\begin{proof}
 The direction $"\gtrsim"$ is immediate because the right-hand side of (\ref{sharpequiv}) is bounded by 
$\big\Vert \mathcal{M}_q\big( \big\Vert \{f_k\}_{k\in\zz}\big\Vert_{\ell^q}\big)\big\Vert_{L^p(\rd)}$ and the $L^p$-boundedness of $\mathcal{M}_q$ yields the desired estimate.

For the opposite direction, using (\ref{dyadicmaximal}),  the left-hand side of (\ref{sharpequiv}) is smaller than a constant times
\begin{equation*}
\Big\Vert \mathcal{M}^{\sharp}\Big(\sum_{k\in\zz}{|f_k|^q}\Big)\Big\Vert_{L^{p/q}(\rd)}^{1/q}
\end{equation*}
and the sharp maximal function can be controlled by the sum of
\begin{equation*}
\sup_{P\in\mathcal{D}:x\in P}{\frac{1}{|P|}\int_P{\sum_{k=-\log_2{\ell(P)}}^{\infty}{|f_k(y)|^q}}dy},
\end{equation*}
\begin{equation*}
\mathfrak{N}^q\big(\{f_k\}_{k\in\zz}\big):=\sup_{P\in\mathcal{D}:x\in P}{\frac{1}{|P|}\int_{P}\frac{1}{|P|}\int_P{\sum_{k=-\infty}^{-\log_2{\ell(P)}-1}{\big|f_k(y)-f_k(z) \big|^q}}dzdy}.
\end{equation*}
The first term clearly gives the expected upper bound and thus it is enough to show that 
\begin{equation}\label{npqsharp}
\mathfrak{N}^q\big(\{f_k\}_{k\in\zz}\big) \lesssim   \sup_{P\in\mathcal{D}: x\in P}\frac{1}{|P|}\int_P{\sum_{k=-\log_2{\ell(P)}}^{\infty}{|f_k(y)|^q}}dy.
\end{equation}
If $\ell(P)\leq 2^{-k-1}$ then there exists the unique dyadic cube $Q_P\in\mathcal{D}_{k}$ containing $P$. 
Then, using Taylor's formula, we can bound $ \mathfrak{N}^q\big(\{f_k\}_{k\in\zz}\big)$ by
\begin{equation*}
\sup_{P\in\mathcal{D}:x\in P}{ \sum_{k=-\infty}^{-\log_2{\ell(P)}-1}{\big( 2^kl(P)\big)^q\big( \sup_{w\in Q_P}{|\psi_k|\ast |f_k|(w)}\big)^q}}
\end{equation*}
for some $\psi_k\in S(\rd)$ with $Supp(\widehat{\psi_k})\subset \big\{\xi\in \rd:|\xi|\lesssim 2^k \big\}$.
Moreover, (\ref{infmax}) implies that that for any $\sigma>0$ 
\begin{align*}
\sup_{w\in Q_P}{|\psi_k|\ast |f_k|(w)}&\lesssim_{\sigma}\inf_{w\in Q_P}{\mathfrak{M}_{\sigma,2^k}\big(|\psi_k|\ast |f_k|\big)(w)}\\
 &\lesssim \inf_{w\in Q_P}{\mathfrak{M}_{\sigma,2^k}\big( \mathfrak{M}_{\sigma,2^k}f_k\big)(w)}\lesssim \inf_{w\in Q_P}{\mathfrak{M}_{\sigma,2^k}f_k(w)}
\end{align*}
and this yields that 
\begin{align*}
 \mathfrak{N}^q\big(\{f_k\}_{k\in\zz}\big)&\lesssim \sup_{P\in\mathcal{D} : x\in P} \sum_{k=-\infty}^{-\log_2{\ell(P)}-1}{\big( 2^k\ell(P)\big)^q\Big(\inf_{w\in Q_P}{\mathfrak{M}_{\sigma,2^k}f_k(w)}\Big)^q}\\
 &\lesssim \sup_{P\in\mathcal{D} : x\in P}{\sup_{k\in\zz}{\inf_{w\in Q_P}{\big(\mathfrak{M}_{\sigma,2^k}f_k(w) \big)^q}}}.
 \end{align*}
 We observe that for each $Q_P\in\mathcal{D}_{k}$, the infimum over $w\in O_P$ in the preceding expression is less than
 \begin{align*}
 \inf_{w\in Q_P}{\sum_{l=-\log_2{\ell(Q_P)}}^{\infty}{\big( \mathfrak{M}_{\sigma,2^l}f_l(w)\big)^q}}&\leq \frac{1}{|Q_P|}\int_{Q_P}{\sum_{l=-\log_2{\ell(Q_P)}}^{\infty}{\big( \mathfrak{M}_{\sigma,2^l}f_l(w)\big)^q}}dw\\
 &\leq \sup_{Q\in\mathcal{D} : x\in Q}{ \frac{1}{|Q|}\int_{Q}{\sum_{l=-\log_2{\ell(Q)}}^{\infty}{\big( \mathfrak{M}_{\sigma,2^l}f_l(w)\big)^q}}dw}
 \end{align*} since $x\in P\subset Q_P$.
 Choosing $\sigma>p,q$, the last expression can be further controlled by 
 \begin{equation*}
 \sup_{Q\in\mathcal{D} : x\in Q}{ \frac{1}{|Q|}\int_{Q}{\sum_{l=-\log_2{\ell(Q)}}^{\infty}{\big| f_l(w)\big|^q}}dw}.
 \end{equation*}
 The proof of this estimate is contained in \cite[Lemma 2.2]{Park4} and we omit it here. This completes the proof of (\ref{npqsharp}).

\end{proof}

\section{Dualization of $L_A^p(\ell^q)$ via a discrete function space $\dot{f}_{p}^{0,q}$}\label{dualization}

Suppose $1<p<\infty$ and $1\leq q<\infty$. Let $1<p'<\infty$ and $1<q'\leq \infty$ be the H\"older conjugates of $p$ and $q$, respectively.
Then it is known in \cite{Fr_Ja2} that the dual of $\dot{f}_p^{0,q}$ is $\dot{f}_{p'}^{0,q'}$.
Indeed, for $\{b_Q\}_{Q\in\mathcal{D}}\in \dot{f}_{p'}^{0,q'}$
\begin{equation}\label{discretedual}
\Vert \{b_Q\}_{Q\in\mathcal{D}} \Vert_{\dot{f}_{p'}^{0,q'}}=\sup_{\{r_Q\}_{Q\in\mathcal{D}}:\Vert \{r_Q\}_{Q\in\mathcal{D}}\Vert_{\dot{f}_{p}^{0,q}}\leq 1}{\Big| \sum_{Q\in\mathcal{D}}{b_Q r_Q}\Big|}.
\end{equation}

In this section, we dualize $L^{p}_A(\ell^q)$ through the relationship between the vector-valued space $L_A^p(\ell^q)$ and the discrete space $\dot{f}_p^{0,q}$ in Lemma \ref{discrete1}.

For any $\{f_k\}_{k\in\zz}\in L^p_A(\ell^q)$ and $Q\in\mathcal{D}$ we define the operator $\mathfrak{U}_Q$ by
\begin{equation*}
\mathfrak{U}_Q\big(\{f_k\}_{k\in\zz}\big):= |Q|^{1/2}f_{-\log_2{\ell(Q)}}(x_Q)
\end{equation*} where we recall that $x_Q$ is the lower left corner of $Q\in\mathcal{D}$.
Furthermore, for any $\{r_Q\}_{Q\in\mathcal{D}}\in \dot{f}_p^{0,q}$ and $k\in\zz$ we define the operator $\mathfrak{V}_k^{\Psi_0}$ by
\begin{equation*}
\mathfrak{V}_k^{\Psi_0}\big(\{r_Q\}_{Q\in\mathcal{D}}\big)(x):=\sum_{Q\in\mathcal{D}_k}{r_Q\Psi^Q(x)}.
\end{equation*}
Then for each $k\in\zz$
\begin{equation*}
\mathfrak{V}_k^{\Psi_0}\big(\big\{\mathfrak{U}_Q(\{f_j\}_{j\in\zz})\big\}_{Q\in\mathcal{D}}\big)(x)=f_k(x)
\end{equation*}
and it follows from Lemma \ref{discrete1} (2) that 
\begin{equation}\label{discretecha2}
\big\Vert \big\{\mathfrak{V}_k^{\Psi_0}\big(\{r_Q\}_{Q\in\mathcal{D}}\big)\big\}_{k\in \zz}\big\Vert_{L^p(\ell^q)}\lesssim \Vert \{r_Q\}_{Q\in\mathcal{D}}\Vert_{\dot{f}_p^{0,q}}.
\end{equation}

\begin{lemma}\label{dualitylemma}
Let $1<p<\infty$ and $1\leq q<\infty$.
Suppose $f_k\in \mathcal{E}(2^{k-2})$ for $k\in\zz$.
Then
\begin{equation*}
\Vert \{f_k\}_{k\in\zz}\Vert_{L^{p'}(\ell^{q'})}\lesssim\sup_{\{r_Q\}_{Q\in\mathcal{D}}:\Vert \{r_Q\}_{Q\in\mathcal{D}}\Vert_{\dot{f}_{p}^{0,q}}\leq 1}{\Big|\int_{\rd}{\sum_{k\in\zz}{f_k(x)\mathfrak{V}_k^{\Psi_0}\big(\{r_Q\}_{Q\in\mathcal{D}}\big)(x)}}dx \Big|}
\end{equation*}
\end{lemma}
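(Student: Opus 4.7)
The plan is to discretize $\{f_k\}_{k\in\zz}$ via the $\varphi$-transform of Lemma \ref{discrete1}, invoke the known duality (\ref{discretedual}) between $\dot f_p^{0,q}$ and $\dot f_{p'}^{0,q'}$ on the discrete side, and then match the resulting discrete pairing with the integral pairing on the right-hand side using the reproducing property of $\Psi_0$ on the Fourier support of each $f_k$.

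More concretely, set $\mathbf{b}:=\{b_Q\}_{Q\in\mathcal{D}}$ with $b_Q:=\mathfrak{U}_Q(\{f_j\}_{j\in\zz})=|Q|^{1/2}f_{-\log_2\ell(Q)}(x_Q)$. Lemma \ref{discrete1}\,(1) yields the reconstruction $f_k=\mathfrak{V}_k^{\Psi_0}(\mathbf{b})$, and applying the reverse estimate (\ref{converse2}) of Lemma \ref{discrete1}\,(2) with exponents $p'$, $q'$ gives
\[
\bigl\Vert\{f_k\}_{k\in\zz}\bigr\Vert_{L^{p'}(\ell^{q'})}\lesssim \Vert\mathbf{b}\Vert_{\dot f_{p'}^{0,q'}}.
\]
The duality identity (\ref{discretedual}) then rewrites
\[
\Vert\mathbf{b}\Vert_{\dot f_{p'}^{0,q'}}=\sup_{\Vert\{r_Q\}\Vert_{\dot f_p^{0,q}}\le 1}\Bigl|\sum_{Q\in\mathcal{D}}b_Q r_Q\Bigr|,
\]
so the entire matter reduces to the identification
\[
\sum_{Q\in\mathcal{D}}b_Q r_Q=\int_{\rd}\sum_{k\in\zz}f_k(x)\,\mathfrak{V}_k^{\Psi_0}(\{r_Q\})(x)\,dx.
\]

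For this, I would use that $\widehat{\Psi_k}\equiv 1$ on $\{|\xi|\le 2^{k-1}\}$, which contains the Fourier support of $f_k$ (since $f_k\in\mathcal{E}(2^{k-2})$); because this set is symmetric, the same holds with $\Psi_k$ replaced by $\check{\Psi}_k(y):=\Psi_k(-y)$, so $f_k\ast\check{\Psi}_k=f_k$ pointwise. For $Q\in\mathcal{D}_k$ this yields the sampling formula
\[
\int_{\rd}f_k(x)\Psi^Q(x)\,dx=|Q|^{1/2}(f_k\ast\check{\Psi}_k)(x_Q)=|Q|^{1/2}f_k(x_Q)=b_Q,
\]
and multiplying by $r_Q$, summing over $Q\in\mathcal{D}$, and exchanging sum with integral by Fubini produces the claimed identification; the lemma then follows on combining the three displays. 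The only technical subtlety is the Fubini step: absolute convergence $\sum_Q|b_Q r_Q|\lesssim \Vert\mathbf{b}\Vert_{\dot f_{p'}^{0,q'}}\Vert\{r_Q\}\Vert_{\dot f_p^{0,q}}$ (via (\ref{discretedual})) is available once $\mathbf{b}\in\dot f_{p'}^{0,q'}$ is known, and in the a priori case this is handled by truncating $\mathbf{b}$ to finitely many dyadic cubes, applying the above identity in the truncated regime, and passing to the limit. This approximation is the main nuisance, but is of entirely routine type.
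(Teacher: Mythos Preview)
Your proposal is correct and follows essentially the same route as the paper: discretize via $b_Q=\mathfrak{U}_Q(\{f_l\})$, use Lemma~\ref{discrete1} to pass between $\Vert\{f_k\}\Vert_{L^{p'}(\ell^{q'})}$ and $\Vert\mathbf{b}\Vert_{\dot f_{p'}^{0,q'}}$, invoke the discrete duality (\ref{discretedual}), and then identify $\sum_Q b_Q r_Q$ with the integral pairing using the reproducing property $f_k=\check{\Psi}_k\ast f_k$ (the paper writes this with $\wt{\Psi_k}:=\Psi_k(-\cdot)$). Your treatment is in fact slightly more careful than the paper's in flagging the Fubini/truncation issue, which the paper leaves implicit.
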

\begin{proof}
By using Lemma \ref{discrete1} and (\ref{discretedual})
\begin{align*}
\Vert \{f_k\}_{k\in\zz}\Vert_{L^{p'}(\ell^{q'})}&=\big\Vert \big\{ \mathfrak{U}_Q\big(\{f_l\}_{l\in\zz}\big)\big\}_{Q\in\mathcal{D}} \big\Vert_{\dot{f}_{p'}^{0,q'}}\\
 &=\sup_{\{r_Q\}_{Q\in\mathcal{D}}:\Vert \{r_Q\}_{Q\in\mathcal{D}}\Vert_{\dot{f}_{p}^{0,q}}\leq 1}{\Big|\sum_{k\in\zz}\sum_{Q\in\mathcal{D}_k}{\mathfrak{U}_Q\big(\{f_l\}_{l\in\zz}\big) r_Q} \Big|}.
\end{align*}
Moreover, for each $k\in\zz$ 
\begin{align*}
\sum_{Q\in\mathcal{D}_k}\mathfrak{U}_Q\big(\{f_l\}_{l\in\zz}\big) r_Q&=\sum_{Q\in\mathcal{D}_k}2^{-kd/2}f_k(x_Q)r_Q=\sum_{Q\in\mathcal{D}_k}2^{-kd/2}\wt{\Psi_k}\ast f_k(x_Q)r_Q\\
 &=\int_{\rd}{f_k(x)\Big(\sum_{Q\in\mathcal{D}_k}r_Q2^{-kd/2} \Psi_k(x-x_Q) \Big)}dx\\
 &=\int_{\rd}{f_k(x)\mathfrak{V}_k^{\Psi_0}\big(\{r_Q\}_{Q\in\mathcal{D}}\big)(x)}dx
\end{align*} where $\wt{\Psi_k}:=\Psi_k(-\cdot)$, and this proves the lemma.
\end{proof}


\section{Complex Interpolation theorem for multipliers on $L^p_A(\ell^q)$}\label{complexsection}

In this section, we obtain an interpolation theorem for multipliers on $L^p_A(l^q)$ by using
 the complex method of Triebel \cite[2.4.9]{Tr}, which is a generalization of the well-known results of Calder\'on \cite{Ca} and Calder\'on and Torchinsky \cite{Ca_To}.

Let $\Omega:=\{z\in \cc : 0<Re(z)<1 \}$ be a strip in the complex plane $\cc$ and $\overline{\Omega}$ denote its closure.
We say that the mapping $z \mapsto f^z\in S'(\rn)$ is a $S'$-analytic function in $\Omega$ if the following properties are satisfied:
\begin{enumerate}
\item For any $\varphi\in S(\rn)$ with compact support,
$g(x,z):=\big(\varphi \widehat{f^z} \big)(x)$ is a uniformly continuous and bounded function in $\rn\times \overline{\Omega}$.
\item For any $\varphi\in S(\rn)$ with compact support and any fixed $x\in\rn$,
$h_x(z):=\big( \varphi \widehat{f^z}\big)^{\vee}(x)$ is an analytic function in $\Omega$.
\end{enumerate}

Let $0<p_0,p_1,q_0,q_1<\infty$.
Then we define
$F\big(L^{p_0}_A(\ell^{q_0}),L^{p_1}_A(\ell^{q_1})\big)$ to be the collection of all systems $\ff^z:= \{f_k^z\}_{k\in\zz}$ such that each $f_k^z$ is a $S'$-analytic function  in $\Omega$, 
\begin{equation*}
\ff^{it}=\{f_k^{it}\}_{k\in\zz}\in L_A^{p_0}(\ell^{q_0}), \qquad \ff^{1+it}=\{ f_k^{1+it}\}_{k\in\zz}\in L_A^{p_1}(\ell^{q_1}) \quad  \text{ for any }~ t\in \rr,
\end{equation*} and
\begin{equation*}
\sup_{t\in \rr}{\big\Vert \ff^{l+it}\big\Vert_{L^{p_l}(\ell^{q_l})}}<\infty \qquad \text{ for each }~ l=1,2.
\end{equation*}
Moreover, for $\ff^z \in F(L_A^{p_0}(\ell^{q_0}),L_A^{p_1}(\ell^{q_1}))$,
\begin{equation*}
\Vert \ff^z \Vert_{F(L_A^{p_0}(\ell^{q_0}),L_A^{p_1}(\ell^{q_1}))}:=\max{\Big( \sup_{t\in \rr}{\Vert \ff^{it}\Vert_{L^{p_0}(\ell^{q_0})}}, \sup_{t\in \rr}{\Vert\ff^{1+it}\Vert_{L^{p_1}(\ell^{q_1})}}\Big)}.
\end{equation*}
For $0<\theta<1$ the intermediate space $( L_A^{p_0}(\ell^{q_0}), L_A^{p_1}(\ell^{q_1}))_{\theta}$ is defined by
\begin{equation*}
\big( L_A^{p_0}(\ell^{q_0}), L_A^{p_1}(\ell^{q_1})\big)_{\theta}:=\big\{\{ f_k\}_{k\in\zz}: \exists \ff^z=\{f_k^z\}_{z\in\zz}\in F\big(L_A^{p_0}(\ell^{q_0}), L_A^{p_1}(\ell^{q_1})\big) \text{ s.t. } f_k=f_k^{\theta} \big\}
\end{equation*}
and the (quasi-)norm in the space is 
\begin{equation*}
\Vert \{f_k\}_{k\in\zz}\Vert_{( L_A^{p_0}(\ell^{q_0}), L_A^{p_1}(\ell^{q_1}))_{\theta}}:=\inf_{\ff^z\in F(L_A^{p_0}(\ell^{q_0}), L_A^{p_1}(\ell^{q_1})):f_k=f_k^{\theta}}{\Vert \ff^z\Vert_{F(L_A^{p_0}(\ell^{q_0}), L_A^{p_1}(\ell^{q_1}))}}
\end{equation*}
where the infimum is taken over all admissible system $\ff^z=\{f_k^z\}_{k\in\zz}\in F\big(L_A^{p_0}(\ell^{q_0}), L_A^{p_1}(\ell^{q_1})\big)$ such that $f_k=f_k^{\theta}$.
It is known in \cite[2.4.9]{Tr} that for any $0<p_0,p_1,q_0,q_1<\infty$ and $0<\theta<1$
\begin{equation}\label{vectorinterpol}
\big( L_A^{p_0}(\ell^{q_0}), L_A^{p_1}(\ell^{q_1})\big)_{\theta}= L_A^p(\ell^q)
\end{equation} when $1/p=(1-\theta)/p_0+\theta/p_1$ and  $1/q=(1-\theta)/q_0+\theta/q_1$.

\begin{proposition}\label{complexinterpolation}
Let $0<p_0,p_1,q_0,q_1<\infty$, $s_0,s_1 \geq 0$, and $1<r_0,r_1<\infty$.
Suppose that for any $\{g_k\}_{k\in\zz}\in L_A^{p_0}(\ell^{q_0})$ and $\{h_k\}_{k\in \zz}\in L_A^{p_1}(\ell^{q_1})$,
\begin{equation}\label{bound0}
\big\Vert \big\{ m_k^{\vee}\ast g_k\big\}_{k\in\zz}\big\Vert_{L^{p_0}(\ell^{q_0})}\lesssim \mathcal{L}_{s_0}^{r_0}[\mm]\Vert \{g_k\}\Vert_{L^{p_0}(\ell^{q_0})},
\end{equation}
\begin{equation}\label{bound1}
\big\Vert \big\{ m_k^{\vee}\ast h_k\big\}_{k\in\zz}\big\Vert_{L^{p_1}(\ell^{q_1})}\lesssim \mathcal{L}_{s_1}^{r_1}[\mm]\Vert \{h_k\}\Vert_{L^{p_1}(\ell^{q_1})}.
\end{equation}
Then for any $0<\theta<1$ and $p,q,r,s$ satisfying 
\begin{equation}\label{pqcondition}
1/p=(1-\theta)/p_0+\theta/p_1, \qquad 1/q=(1-\theta)/q_0+\theta/q_1,
\end{equation}
\begin{equation}\label{rscondition}
1/r=(1-\theta)/r_0+\theta/r_1,\qquad s=(1-\theta)s_0+\theta s_1,
\end{equation}
and $\{f_k\}_{k\in\zz}\in L_A^p(\ell^q)$, 
we have
\begin{equation*}
\big\Vert \big\{ m_k^{\vee}\ast f_k\big\}_{k\in\zz}\big\Vert_{L^{p}(\ell^{q})}\lesssim \mathcal{L}_{s}^{r}[\mm]\Vert \{f_k\}\Vert_{L^{p}(\ell^{q})}.
\end{equation*}
\end{proposition}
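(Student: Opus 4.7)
The plan is to adapt the Stein-type complex interpolation method of Calder\'on--Torchinsky and Triebel, combining the intermediate-space identity (\ref{vectorinterpol}) with an analytic family of multipliers built from imaginary Bessel potentials. By homogeneity I may assume $\mathcal{L}_s^r[\mm] = 1$. For any fixed $\ff = \{f_k\}_{k\in\zz} \in L_A^p(\ell^q)$ and $\epsilon > 0$, the identity (\ref{vectorinterpol}) supplies an admissible analytic family $\ff^z = \{f_k^z\}_{k\in\zz}$ in $F(L_A^{p_0}(\ell^{q_0}), L_A^{p_1}(\ell^{q_1}))$ with $f_k^\theta = f_k$, $f_k^z \in \mathcal{E}(A 2^k)$ for every $z \in \overline{\Omega}$, and boundary norms $\sup_{t\in\rr}\|\ff^{j+it}\|_{L^{p_j}(\ell^{q_j})}\leq (1+\epsilon)\|\ff\|_{L^p(\ell^q)}$ for $j=0,1$.

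Next I construct an analytic family of multiplier systems $\mm^z = \{m_k^z\}_{k\in\zz}$ with $m_k^\theta = m_k$ that interpolates between the boundary Sobolev norms. Write $M_k := m_k(2^k\cdot)$ and $g_k := (I-\Delta)^{s/2} M_k$, so that $\sup_k\|g_k\|_{L^r} \leq 1$. With the analytic parameters $s(z) := (1-z)s_0 + z s_1$ and $1/r(z) := (1-z)/r_0 + z/r_1$, put $G_k^z := |g_k|^{r/r(z)} \operatorname{sgn}(g_k)$ and define $m_k^z(\xi) := (I-\Delta)^{-s(z)/2}[G_k^z](2^{-k}\xi)$. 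Relations (\ref{rscondition}) give $s(\theta) = s$ and $r(\theta) = r$, whence $m_k^\theta = m_k$. On the line $z = it$ the shift $s_0 - s(it) = it(s_0 - s_1)$ is purely imaginary, so $(I-\Delta)^{(s_0 - s(it))/2}$ is a Mihlin multiplier on $L^{r_0}$ with norm $\lesssim (1+|t|)^N$, while $\|G_k^{it}\|_{L^{r_0}}^{r_0} = \|g_k\|_{L^r}^r \leq 1$; consequently $\mathcal{L}_{s_0}^{r_0}[\mm^{it}] \lesssim (1+|t|)^N$, uniformly in $k$. The symmetric bound $\mathcal{L}_{s_1}^{r_1}[\mm^{1+it}] \lesssim (1+|t|)^N$ on the line $z = 1+it$ follows analogously.

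Finally, I form the analytic family $\mathbf{F}(z) := \{(m_k^z \widehat{f_k^z})^\vee\}_{k\in\zz}$, which reduces at $z = \theta$ to $\{(m_k \widehat{f_k})^\vee\}_{k\in\zz}$. Combining the multiplier bounds from the previous paragraph with hypotheses (\ref{bound0}) and (\ref{bound1}) yields $\|\mathbf{F}(j + it)\|_{L^{p_j}(\ell^{q_j})} \lesssim (1+|t|)^N \|\ff\|_{L^p(\ell^q)}$ for $j = 0, 1$. A Phragm\'en--Lindel\"of / three-lines argument adapted to the quasi-Banach intermediate space, as in \cite[2.4.9]{Tr}, absorbs the polynomial factor and, via (\ref{vectorinterpol}), delivers $\|\mathbf{F}(\theta)\|_{L^p(\ell^q)} \lesssim \|\ff\|_{L^p(\ell^q)}$. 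Sending $\epsilon \to 0$ and reintroducing the normalization gives the claim. The chief technical point, and the main obstacle, is verifying that the composite family $\mathbf{F}(z)$ is admissible for $(L_A^{p_0}(\ell^{q_0}), L_A^{p_1}(\ell^{q_1}))_\theta$: one must confirm joint $S'$-analyticity in $z$ for each $k$, uniformity in $k$ of boundary estimates (which forces the rescaling $M_k = m_k(2^k\cdot)$ so that the Mihlin bound for the imaginary Bessel potential transfers back scale-invariantly via $\xi \mapsto 2^{-k}\xi$), and compatibility with the frequency-support condition $f_k^z \in \mathcal{E}(A 2^k)$.
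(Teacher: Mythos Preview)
Your approach is essentially the same as the paper's: build an analytic family of multipliers via imaginary Bessel potentials and the standard $|g_k|^{r/r(z)}\operatorname{sgn}(g_k)$ construction, combine it with the analytic family $\ff^z$ coming from (\ref{vectorinterpol}), and bound the intermediate norm of $\{m_k^\vee\ast f_k\}$ by the boundary norms.

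There is, however, a genuine technical gap in how you dispose of the factor $(1+|t|)^N$. In Triebel's framework the bound
\[
\big\Vert \{F_k^\theta\}\big\Vert_{(L_A^{p_0}(\ell^{q_0}),L_A^{p_1}(\ell^{q_1}))_\theta}\le \big\Vert \{F_k^z\}\big\Vert_{F(L_A^{p_0}(\ell^{q_0}),L_A^{p_1}(\ell^{q_1}))}
\]
is simply the definition of the intermediate norm as an infimum over admissible families; there is no separate three-lines step to invoke. Admissibility requires $\sup_{t\in\rr}\Vert \mathbf{F}(j+it)\Vert_{L^{p_j}(\ell^{q_j})}<\infty$, and your $\mathbf{F}(z)$ fails this because its boundary norms grow like $(1+|t|)^N$. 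A Phragm\'en--Lindel\"of argument of the usual $e^{\delta z^2}$ type is not available here in the quasi-Banach range, since one cannot appeal to a duality- or subharmonicity-based maximum principle for $\Vert\cdot\Vert_{L^p(\ell^q)}$ with $p,q<1$.

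The paper closes this gap by building the decay directly into the multiplier: it defines
\[
\sigma_{k,s}^z:=\big(\mathcal{L}_s^r[\mm]\big)^{1-r(\frac{1-z}{r_0}+\frac{z}{r_1})}\,\frac{(1+\theta)^{d/2+1}}{(1+z)^{d/2+1}}\,(I-\Delta)^{-s(z)/2}\big(|\sigma_{k,s}|^{r(\frac{1-z}{r_0}+\frac{z}{r_1})}e^{i\operatorname{Arg}(\sigma_{k,s})}\big)(\cdot/2^k),
\]
so that the extra analytic factor $(1+\theta)^{d/2+1}/(1+z)^{d/2+1}$ equals $1$ at $z=\theta$ and decays like $(1+|t|)^{-(d/2+1)}$ on the boundary lines, exactly cancelling the H\"ormander-multiplier bound for $(I-\Delta)^{it(s_0-s_1)/2}$. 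With this correction the family is admissible and the rest of your argument goes through verbatim.
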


\begin{proof}
Suppose $p,q,r,s$ satisfy (\ref{pqcondition}) and (\ref{rscondition}), and $\{f_k\}_{k\in\zz}\in L_A^p(\ell^q)$.
Then, due to (\ref{vectorinterpol}), for any $\epsilon>0$
there exists $\ff^z=\{f^{z}_k\}\in \big( L_A^{p_0}(\ell^{q_0}), L_A^{p_1}(\ell^{q_1})\big)_{\theta}$ such that $f_k=f_k^{\theta}$ and 
\begin{equation*}
\Vert \ff^z\Vert_{F( L_A^{p_0}(\ell^{q_0}), L_A^{p_1}(\ell^{q_1}))}<\Vert \{f_k\}_{k\in\zz}\Vert_{( L_A^{p_0}(\ell^{q_0}), L_A^{p_1}(\ell^{q_1}))_{\theta}}+\epsilon.
\end{equation*}
Now let 
\begin{equation*}
\sigma_{k,s}:= (I-\Delta)^{s/2}\big(m_k(2^k\cdot)\big)
\end{equation*}
and
\begin{equation*}
\sigma_{k,s}^z:=\big( \mathcal{L}_{s}^{r}[\mm]\big)^{1-r(\frac{1-z}{r_0}+\frac{z}{r_1})}\frac{(1+\theta)^{d/2+1}}{(1+z)^{d/2+1}}(I-\Delta)^{-\frac{s_0(1-z)+s_1z}{2}}\big(|\sigma_{k,s}|^{r(\frac{1-z}{r_0}+\frac{z}{r_1})}e^{i Arg(\sigma_{k,s})}\big)(\cdot/2^k)
\end{equation*} where $Arg(\sigma_{k,s})$ means the argument of  $\sigma_{k,s}$.
Then we note that $\sigma_{k,s}^{\theta}=m_k$ and $F_k^z:=\big(\sigma_{k,s}^z\big)^{\vee}\ast f_k^z$ is a $S'(\rd)$-analytic function in $\Omega$. Moreover,
\begin{align*}
\big\Vert \{m_k^{\vee}\ast f_k\}_{k\in\zz}\big\Vert_{L^p(\ell^q)}&\approx \big\Vert \big\{(\sigma_{k,s}^{\theta})^{\vee}\ast f_k^{\theta}\big\}_{k\in\zz}\big\Vert_{ ( L_A^{p_0}(\ell^{q_0}), L_A^{p_1}(\ell^{q_1}))_{\theta}}\\
&=\big\Vert \{F_k^{\theta}\}_{k\in\zz}\big\Vert_{( L_A^{p_0}(\ell^{q_0}), L_A^{p_1}(\ell^{q_1}))_{\theta}}\leq \big\Vert \{F_k^{z}\}_{k\in\zz}\big\Vert_{F( L_A^{p_0}(\ell^{q_0}), L_A^{p_1}(\ell^{q_1}))}\\
&=\max{\Big(\sup_{t\in\rr}{\big\Vert \{F_k^{it}\}_{k\in\zz}\big\Vert_{L^{L^{p_0}}(\ell^{q_0})}},\sup_{t\in\rr}{\big\Vert \{F_k^{1+it}\}_{k\in\zz}\big\Vert_{L^{p_1}(\ell^{q_1})}} \Big)}.
\end{align*}
From (\ref{bound0}),
\begin{align*}
\big\Vert \{F_k^{it}\}_{k\in\zz}\big\Vert_{L^{p_0}(\ell^{q_0})}&=\big\Vert \big\{ (\sigma_{k,s}^{it})^{\vee}\ast f_k^{it}\big\}_{k\in\zz}\big\Vert_{L^{p_0}(\ell^{q_0})}\\
&\lesssim \sup_{j\in\rr}{\big\Vert \sigma_{j,s}^{it}(2^j\cdot)\big\Vert_{L_{s_0}^{r_0}(\rd)}}\Vert \{f_k^{it}\}_{k\in\zz}\Vert_{L^{p_0}(\ell^{q_0})}\\
&\lesssim\sup_{j\in\rr}{\big\Vert \sigma_{j,s}^{it}(2^j\cdot)\big\Vert_{L_{s_0}^{r_0}(\rd)}}\big( \Vert \{f_k\}_{k\in\zz}\Vert_{(L^{p_0}(\ell^{q_0}),L^{p_1}(\ell^{q_1}))_{\theta}}+\epsilon\big)
\end{align*}
and similarly, thanks to (\ref{bound1}),
\begin{equation*}
\big\Vert \{F_k^{1+it}\}_{k\in\zz}\big\Vert_{L^{p_1}(\ell^{q_1})}\lesssim\sup_{j\in\rr}{\big\Vert \sigma_{j,s}^{1+it}(2^j\cdot)\big\Vert_{L_{s_1}^{r_1}(\rd)}}\big( \Vert \{f_k\}_{k\in\zz}\Vert_{(L^{p_0}(\ell^{q_0}),L^{p_1}(\ell^{q_1}))_{\theta}}+\epsilon\big).
\end{equation*}
Therefore, once we prove
\begin{equation}\label{reduction}
\big\Vert \sigma_{j,s}^{it}(2^j\cdot)\big\Vert_{L_{s_0}^{r_0}(\rd)}, \big\Vert \sigma_{j,s}^{1+it}(2^j\cdot)\big\Vert_{L_{s_1}^{r_1}(\rd)}\lesssim \mathcal{L}_{s}^{r}[\mm], \qquad \text{ uniformly in }~j\in\zz,
\end{equation}
then we are done by using (\ref{vectorinterpol}) and taking $\epsilon\to 0$.

Let us prove (\ref{reduction}). 
By using H\"ormander's multiplier theorem, $\big\Vert \sigma_{j,s}^{it}(2^j\cdot)\big\Vert_{L_{s_0}^{r_0}(\rd)}$ is controlled by a constant times
\begin{align*}
& \big(\mathcal{L}_{s}^{r}[\mm] \big)^{1-\frac{r}{r_0}}\frac{1}{(1+|t|)^{d/2+1}}\Big\Vert (I-\Delta)^{\frac{it(s_0-s_1)}{2}}\Big(|\sigma_{j,s}|^{\frac{r}{r_0}-itr(\frac{1}{r_0}-\frac{1}{r_1})}e^{i Arg(\sigma_{j,s})} \Big)\Big\Vert_{L^{r_0}(\rd)}\\
&\lesssim  \big(\mathcal{L}_{s}^{r}[\mm] \big)^{1-\frac{r}{r_0}} \big\Vert |\sigma_{j,s}|^{r/r_0}\big\Vert_{L^{r_0}(\rd)}=\big(\mathcal{L}_{s}^{r}[\mm] \big)^{1-\frac{r}{r_0}} \big\Vert \sigma_{j,s}\big\Vert_{L^{r}(\rd)}^{r/r_0}\leq \mathcal{L}_{s}^{r}[\mm].
\end{align*}
On the other hand, $\big\Vert \sigma_{j,s}^{1+it}(2^j\cdot)\big\Vert_{L_{s_1}^{r_1}(\rd)}$ is less than a constant multiple of
\begin{align*}
& \big(\mathcal{L}_{s}^{r}[\mm] \big)^{1-\frac{r}{r_1}}\frac{1}{(1+|t|)^{d/2+1}}\Big\Vert (I-\Delta)^{\frac{it(s_0-s_1)}{2}}\Big(|\sigma_{j,s}|^{\frac{r}{r_1}-itr(\frac{1}{r_0}-\frac{1}{r_1})}e^{i Arg(\sigma_{j,s})} \Big)\Big\Vert_{L^{r_1}(\rd)}\\
&\lesssim  \big(\mathcal{L}_{s}^{r}[\mm] \big)^{1-\frac{r}{r_1}} \big\Vert |\sigma_{j,s}|^{r/r_1}\big\Vert_{L^{r_1}(\rd)}=\big(\mathcal{L}_{s}^{r}[\mm] \big)^{1-\frac{r}{r_1}} \big\Vert \sigma_{j,s}\big\Vert_{L^{r}(\rd)}^{r/r_1}\leq \mathcal{L}_{s}^{r}[\mm],
\end{align*} which finishes the proof of (\ref{reduction}).

\end{proof}

\section{The Key Lemma}\label{keylemmasection}

Suppose that (\ref{assumptionm}) holds. Then for $1<r_0<r_1<\infty$ and $s\geq 0$ we have
\begin{equation}\label{membedding}
\Vert m_k(2^k\cdot)\Vert_{L^{r_0}_s(\rd)}\lesssim \Vert m_k(2^k\cdot)\Vert_{L^{r_1}_s(\rd)}.
\end{equation}  
The proof of this will be given in Appendix.
Now the principal ingredient in the proof of Theorem \ref{main1} and \ref{main2} is the following lemma:
\begin{lemma}\label{basiclemma}
Suppose $0<p\leq \infty$ and $k\in\mathbb{Z}$. Suppose $f_k\in \mathcal{E}(2^{k-2})$  and $\{m_k\}_{k\in\mathbb{Z}}$ satisfies $(\ref{assumptionm})$.
Then for 
\begin{equation*}
\big| d/p-d/2\big|<s<d/\min{(1,p)} \quad \text{ and }~\quad   r>\tau^{(s,p)},
\end{equation*} we have
\begin{equation*}
\big\Vert m_k^{\vee}\ast  f_k\big\Vert_{L^p(\rd)}\lesssim \big\Vert m_k(2^k\cdot)\big\Vert_{L_s^{r}(\rd)}\Vert f_k\Vert_{L^p(\rd)}\quad \text{uniformly in }k.
\end{equation*} 
\end{lemma}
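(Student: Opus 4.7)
The scaling $(\xi, x) \mapsto (2^k\xi, 2^{-k}x)$ shows that both sides of the desired inequality rescale compatibly, so we may assume $k = 0$: $m$ is supported in $\{|\xi|\leq 1\}$ and $\widehat{f}$ in $\{|\xi|\leq 1/2\}$. Set $K := m^\vee$. For $p=2$, Plancherel combined with the Sobolev embedding $L^r_s(\rd) \hookrightarrow L^\infty(\rd)$ (valid for $sr>d$) immediately gives $\|K*f\|_{L^2} \leq \|m\|_{L^\infty}\|f\|_{L^2} \lesssim \|m\|_{L^r_s}\|f\|_{L^2}$, matching the claim at $p=2$ where $\tau^{(s,2)} = d/s$.

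The endpoint $p=\infty$ reduces by Young's inequality to $\|K\|_{L^1}\lesssim \|m\|_{L^r_s}$ under $s > d/2$ and $r > d/s$. At $r = 2$ this is Cauchy--Schwarz,
\[
\|K\|_{L^1} \leq \|(1+|y|^2)^{-s/2}\|_{L^2}\cdot\|(1+|y|^2)^{s/2}K\|_{L^2},
\]
the first factor finite because $2s>d$, the second equal to $\|m\|_{L^2_s}$ via the Plancherel identity $\|(1+4\pi^2|y|^2)^{s/2}K\|_{L^2} = \|(I-\Delta)^{s/2}m\|_{L^2}$. For $r>2$ we reduce to $r=2$ via the embedding \eqref{membedding} ($\|m\|_{L^2_s}\lesssim \|m\|_{L^r_s}$); for $r\in (d/s, 2)$ a more delicate Hölder argument with exponents $(r, r')$ is needed, combined with a physical-space dyadic decomposition of $K$ and Bernstein-type inequalities on each band-limited piece. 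The $p=1$ case follows by duality since $T_m^* = T_{\overline m}$ and $\|\overline m\|_{L^r_s} = \|m\|_{L^r_s}$.

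For intermediate $1\leq p\leq\infty$, interpolation via Proposition~\ref{complexinterpolation} (in scalar form, taking $q_0 = q_1 = 2$) between the $p=2$ and $p=\infty$ endpoints reproduces $s > |d/p - d/2|$ and $r > d/s$ exactly from the interpolation constraints, and the range $1\leq p < 2$ is then covered by duality. For $0<p<1$ (the genuinely new range), since $f\in\mathcal{E}(2^{-2})$ is band-limited we have $\|f\|_{L^p} \approx \|f\|_{H^p}$, and similarly for $K*f$. Decomposing $f$ into $H^p$-atoms $\sum_j\lambda_j a_j$ with $a_j$ supported on balls $B_j$ and vanishing moments up to order $\lfloor d(1/p - 1)\rfloor$, one estimates $\|K*a_j\|_{L^p}^p$ by splitting $\int_{2B_j} + \int_{(2B_j)^c}$: the near part by Hölder and the $L^2$ bound, the far part by subtracting a Taylor polynomial of $K$ at the center of $B_j$ and using the pointwise decay of $\nabla^L K$ extracted from $\|m\|_{L^r_s}$. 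The balance between the Taylor order and the kernel decay forces $r > d/(s + d - d/p) = \tau^{(s,p)}$.

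The principal obstacle throughout is the non-local character of $(I-\Delta)^{s/2}$, which destroys the compact Fourier support of $m$ when transferring weighted $L^r$ norms of $K$ to $L^r_s$ norms of $m$. This is particularly delicate for $r < 2$, where Hausdorff--Young is unavailable, and in the $p<1$ regime, where sharp pointwise decay of $K^{(\alpha)}$ must be extracted from integral Sobolev conditions on $m$.
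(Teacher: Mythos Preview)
Your plan would eventually work but is far more involved than needed, and contains one outright misconception. You write that ``for $r<2$, Hausdorff--Young is unavailable''; this is backwards. Hausdorff--Young gives $\|g^{\vee}\|_{L^{r'}}\le\|g\|_{L^r}$ precisely for $1\le r\le 2$, and this is the only analytic tool the paper uses beyond H\"older. After reducing to $\tau^{(s,p)}<r<2$ via \eqref{membedding}, the cases $p=1$ and $p=\infty$ (via Young's inequality) need only $\|m_k^{\vee}\|_{L^1}\lesssim\|m_k(2^k\cdot)\|_{L^r_s}$, which is a one-line computation: H\"older splits off the weight $(1+4\pi^2|\cdot|^2)^{-s/2}\in L^r$ (finite since $sr>d$), and then
\[
\big\|(1+4\pi^2|\cdot|^2)^{s/2}\big(m_k(2^k\cdot)\big)^{\vee}\big\|_{L^{r'}}
=\big\|\big((I-\Delta)^{s/2}m_k(2^k\cdot)\big)^{\vee}\big\|_{L^{r'}}
\le\|m_k(2^k\cdot)\|_{L^r_s}
\]
by Hausdorff--Young. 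No dyadic decomposition of $K$, no band-limited pieces. The range $1<p<\infty$ is simply Theorem~A; interpolation is not needed.

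For $0<p<1$ your $H^p$-atomic route has a genuine gap: the equivalence $\|f\|_{L^p}\approx\|f\|_{H^p}$ for band-limited $f$ is \emph{false} for the homogeneous Hardy space (take $f=\Psi_0$, whose Littlewood--Paley square function behaves like $|x|^{-d}$ at infinity, so $\|\Psi_0\|_{\dot H^p}=\infty$ for $p\le 1$). One can repair this with the local space $h^p$, but then large-ball atoms have no moment conditions and your Taylor-expansion argument for the far part breaks down. The paper sidesteps all of this with the $L^p$ convolution inequality for band-limited functions \cite[1.3.2]{Tr}: since both $m_k^{\vee}$ and $f_k$ have Fourier support in $\{|\xi|\le 2^k\}$,
\[
\|m_k^{\vee}\ast f_k\|_{L^p}\lesssim 2^{kd(1/p-1)}\|m_k^{\vee}\|_{L^p}\|f_k\|_{L^p},
\]
and then $2^{kd(1/p-1)}\|m_k^{\vee}\|_{L^p}=\|(m_k(2^k\cdot))^{\vee}\|_{L^p}\lesssim\|m_k(2^k\cdot)\|_{L^r_s}$ by the same H\"older/Hausdorff--Young scheme, now with H\"older exponent $t=(1-p+p/r)^{-1}$ chosen so that $pt'=r'$; the integrability of the weight is exactly the condition $r>\tau^{(s,p)}$. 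The entire lemma is under half a page.
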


\begin{proof}
This is trivial when $1<p<\infty$, due to Theorem \ref{thma}, and thus we are mainly concerned with the case $0<p\leq 1$ or $p=\infty$, assuming  $d/\min{(1,p)}-d/2<s<d/\min{(1,p)}$, which
implies that $1<\tau^{(s,p)}<2$. Furthermore, thanks to (\ref{membedding}), we may also assume that $\tau^{(s,p)}<r<2$.

When $p=1$ or $p=\infty$, it follows immediately from Young's inequality that 
\begin{equation*}
\big\Vert m_k^{\vee}\ast  f_k\big\Vert_{L^p(\rd)}\lesssim \Vert m_k^{\vee}\Vert_{L^1(\rd)} \Vert f_k\Vert_{L^p(\rd)}.
\end{equation*}
On the other hand, using a dilation, H\"older's inequality with $r>1$, and the Hausdorff-Young inequality with $1<r<2$, we obtain
\begin{align*}
\Vert m_k^{\vee}\Vert_{L^1(\rd)}&=\big\Vert \big( m_k(2^k\cdot)\big)^{\vee}\big\Vert_{L^1(\rd)}\lesssim \big\Vert \big(1+4\pi^2|\cdot|^2 \big)^{s/2}\big(m_k(2^k\cdot) \big)^{\vee} \big\Vert_{L^{r'}(\rd)}\\
 &\lesssim \big\Vert m_k(2^k\cdot)\big\Vert_{L^r_s(\rd)},
\end{align*} which ends the argument.

For $0<p<1$, Bernstein's inequality ( see \cite[1.3.2]{Tr} ) proves that
\begin{equation*}
\big\Vert ( m_k )^{\vee}\ast f_k\big\Vert_{L^p(\rd)} \lesssim 2^{kd(1/p-1)}\big\Vert ( m_k )^{\vee}\big\Vert_{L^p(\rd)}\Vert f_k\Vert_{L^p(\rd)}
\end{equation*}
and then using a dilation, H\"older's inequality with $t:=\frac{1}{1-p+p/r}>1$, and the Hausdorff-Young inequality with $1<r<2$, we have
\begin{align*}
2^{kd(1/p-1)}\Vert m_k ^{\vee}\Vert_{L^p(\rd)}&=\big\Vert \big( m_k(2^k\cdot)\big)^{\vee}\big\Vert_{L^p(\rd)}\lesssim \Big\Vert \big| \big( 1+4\pi^2|\cdot|^2\big)^{s/2}\big(m_k(2^k\cdot) \big)^{\vee}\big|^p\Big\Vert_{L^{t'}(\rd)}^{1/p}\\
&=\Big\Vert  \big( 1+4\pi^2|\cdot|^2\big)^{s/2}\big(m_k(2^k\cdot) \big)^{\vee}\Big\Vert_{L^{pt'}(\rd)}\lesssim \big\Vert m_k(2^k\cdot)\big\Vert_{L_s^r(\rd)}
\end{align*} since $r'=pt'$.
This completes the proof.     
\end{proof}

\section{Proof of Theorem \ref{main2}}

Let $|d/q-d/2|<s<d/\min{(1,q)}$ and $r>\tau^{(s,q)}$.
Suppose $\nu\geq \mu$ and $P\in\mathcal{D}_{\nu}$ (i.e. $\ell(P)=2^{-\nu}\leq 2^{-\mu}$).
Let  $P^{*}=9P$ denote the concentric dilate of $P$ by a factor of $9$. Note that $P^*$ is a union of some dyadic cubes near $P$.
Then we decompose
\begin{align*}
\Big(\frac{1}{|P|}\int_P{\sum_{k=\nu}^{\infty}{\big| m_k^{\vee}\ast f_{k}(x)\big|^q}}dx \Big)^{1/q}&\lesssim  \Big(\frac{1}{|P|}\int_P{\sum_{k=\nu}^{\infty}{\big|m_{k}^{\vee}\ast \big(\chi_{P^*} f_{k}\big)(x) \big|^q}}dx\Big)^{1/q}\\
&\relphantom{=}+\Big(\frac{1}{|P|}\int_P{\sum_{k=\nu}^{\infty}{\big|m_{k}^{\vee}\ast \big(\chi_{(P^*)^c} f_{k}\big)(x) \big|^q}}dx\Big)^{1/q}\\
&=:\mathcal{U}_P+\mathcal{V}_P.
\end{align*}

We observe that, due to (\ref{assumptionm}), 
\begin{equation}\label{psiast}
m_{k}^{\vee}\ast \big( \chi_{P^*}f_k\big)=m_{k}^{\vee}\ast {\Psi_{k+1}}\ast\big( \chi_{P^*}f_k\big)
\end{equation}
and then $\mathcal{U}_P$ is estimated by
\begin{equation*}
 \Big( \frac{1}{|P|}\sum_{k=\nu}^{\infty}{\big\Vert m_{k}^{\vee}\ast \Psi_{k+1}\ast \big( \chi_{P^*}f_k\big)\big\Vert_{L^q(\rd)}^q}\Big)^{1/q}\lesssim \mathcal{L}_{s}^{r}[\mm]\Big( \frac{1}{|P|}\sum_{k=\nu}^{\infty}{\big\Vert \Psi_{k+1}\ast \big(\chi_{P^*}f_k\big)\big\Vert_{L^q(\rd)}^q}\Big)^{1/q},
\end{equation*} due to Lemma \ref{basiclemma}.
We now claim that for any $\sigma>0$
\begin{equation}\label{lqest}
\big\Vert \Psi_{k+1}\ast \big(\chi_{P^*}f_k\big)\big\Vert_{L^q(\rd)}\lesssim_{\sigma} \Big(\int_{P^*}{\big(\mathfrak{M}_{\sigma,2^k}f_k(y)\big)^q}dy\Big)^{1/q}.
\end{equation}
This follows immediately from Young's inequality if $q\geq 1$.
For $0<q<1$,
we write
\begin{align*}
 \big\Vert \Psi_{k+1}\ast \big(\chi_{P^*}f_k\big)\big\Vert_{L^q(\rd)}^q&=\sum_{Q\in\mathcal{D}_k,Q\subset P^*}{\big\Vert \Psi_{k+1}\ast \big(\chi_{Q}f_k\big)\big\Vert_{L^q(\rd)}^q}\\
&\leq \sum_{Q\in\mathcal{D}_k,Q\subset P^*}{\Vert f_k\Vert_{L^{\infty}(Q)}^q}\int_{\rd}{\Big(\int_Q{|\Psi_{k+1}(x-y)|}dy \Big)^q}dx.
\end{align*}
The integral in the preceding expression can be estimated, using H\"older's inequality with $1/q>1$, by
\begin{equation*}
\Big(\int_{\rd}{\frac{1}{(1+2^k|x-c_Q|)^{M/(1-q)}}}dx \Big)^{1-q}\Big(\int_{\rd}{\int_{Q}{(1+2^k|x-c_Q|)^{M/q}\big|\Psi_{k+1}(x-y) \big|}dy}dx \Big)^{q},
\end{equation*}
which is clearly smaller than a constant multiple of $2^{-kd}$ for sufficiently large $M>0$.
This, together with (\ref{infmax}), yields that
\begin{align*}
 \big\Vert \Psi_{k+1}\ast \big(\chi_{P^*}f_k\big)\big\Vert_{L^q(\rd)}^q&\lesssim\sum_{Q\in\mathcal{D}_k,Q\subset P^*}{2^{-kd}\inf_{y\in Q}{\big(\mathfrak{M}_{\sigma,2^k}f_k(y)\big)^q}}\\
&\leq \sum_{Q\in\mathcal{D}_k,Q\subset P^*}{\int_Q{\big(\mathfrak{M}_{\sigma,2^k}f_k(y)\big)^q}dy}=\int_{P^*}{\big(\mathfrak{M}_{\sigma,2^k}f_k(y)\big)^q}dy
\end{align*} 
and we finally arrive at the desired estimate (\ref{lqest}).
Therefore we have
\begin{align}
\mathcal{U}_P&\lesssim  \mathcal{L}_{s}^{r}[\mm]\Big(\frac{1}{|P|}\int_{P^*}{\sum_{k=\nu}^{\infty}{\big(\mathfrak{M}_{\sigma,2^k}f_k(y)\big)^q}}dy \Big)^{1/q}\label{recall1}\\
&\lesssim  \mathcal{L}_{s}^{r}[\mm]\sup_{R\in\mathcal{D}_{\nu}}\Big(\frac{1}{|R|}\int_{R}{\sum_{k=\nu}^{\infty}{\big(\mathfrak{M}_{\sigma,2^k}f_k(y)\big)^q}}dy \Big)^{1/q}.\nonumber
\end{align}
Choosing $\sigma>d/q$ and applying the maximal inequality (\ref{maximal2}), we conclude that
\begin{equation*}
\mathcal{U}_P\lesssim  \mathcal{L}_{s}^{r}[\mm] \sup_{R\in\mathcal{D}: \ell(R)\leq 2^{-\mu}}{\Big( \frac{1}{|R|}\int_R{\sum_{k=-\log_2{\ell(R)}}^{\infty}{\big|f_k(x) \big|^q}}dx\Big)^{1/q}}.
\end{equation*}

To estimate $\mathcal{V}_P$ we note that $r>\tau^{(s,q)}$ implies that $s-d/r>d/\min{(1,q)}-d$ and there exists $\epsilon>0$ so that $s-\epsilon-d/r>d/\min{(1,q)}-d\geq 0$.
Then we see that for $x\in P$
\begin{align*}
\big| m_{k}^{\vee}\ast \big(\chi_{(P^*)^c}f_k \big)(x)\big| &\leq \int_{|z|\gtrsim \ell(P)}{|m_{k}^{\vee}(z)||f_k(x-z)|}dz\\
&\leq \mathfrak{M}_{\epsilon,2^k}f_k(x)\int_{|z|\gtrsim \ell(P)}{\big(1+2^k|z|\big)^{\epsilon}|m_{k}^{\vee}(z)|}dz
\end{align*}
and the integral is less than a constant times
\begin{align*}
&\Big( \int_{|z|\gtrsim 2^k \ell(P)}{\frac{1}{|z|^{(s-\epsilon)r}}}dz\Big)^{1/r}\big\Vert (1+4\pi^2|\cdot|^2)^{s/2}\big(m_k(2^k\cdot)\big)^{\vee}\big\Vert_{L^{r'}(\rd)}\lesssim 2^{-(k-\nu)(s-\epsilon-d/r)}\mathcal{L}_{s}^{r}[\mm]
\end{align*} by applying H\"older's inequality and the Hausdorff-Young inequality.
This proves that
\begin{align}
\mathcal{V}_P&\lesssim \mathcal{L}_{s}^{r}[\mm]\Big( \frac{1}{|P|}\int_P{\sum_{k=\nu}^{\infty}{2^{-q(k-\nu)(s-\epsilon-d/r)}\big( \mathfrak{M}_{\epsilon,2^k}f_k(x)\big)^{q}}}dx\Big)^{1/q}\label{estvp}\\
&\lesssim \mathcal{L}_{s}^{r}[\mm]\big\Vert \big\{\mathfrak{M}_{\epsilon,2^k}f_k\big\}_{k\geq \nu}\big\Vert_{L^{\infty}(l^{\infty})}\lesssim \mathcal{L}_{s}^{r}[\mm]\big\Vert \big\{f_k\big\}_{k\geq \nu}\big\Vert_{L^{\infty}(l^{\infty})}\nonumber\\
&\lesssim \mathcal{L}_{s}^{r}[\mm]\sup_{R\in\mathcal{D}: \ell(R)\leq 2^{-\nu}}\Big( \frac{1}{|R|}\int_P{\sum_{k=-\log_2{\ell(R)}}^{\infty}{\big|f_k(x)\big|^{q}}}dx\Big)^{1/q} \nonumber
\end{align} where the maximal inequality (\ref{maximal1})  and the embedding (\ref{embeddinginfty}) are applied. 

By taking the supremum of $\mathcal{U}_P$ and $\mathcal{V}_P$ over all dyadic cubes $P$ whose side length is less or equal to $2^{-\mu}$, the proof of Theorem \ref{main2} is complete.

\section{Proof of Theorem \ref{main1}}
A straightforward application of Lemma \ref{basiclemma} proves the special case $0<p=q\leq \infty$ and therefore we work only with the case $p\not= q$ and $0<p<\infty$.

\subsection{\textbf{The case $0<p\leq 1$ and $p<q\leq \infty$}}
Assume $d/p-d/2<s<d/p$. Then $1<\tau^{(s,p)}<2$ and we may assume $\tau^{(s,p)}<r<2$ because of (\ref{membedding}).
 According to Lemma \ref{discrete1} and Lemma \ref{decomhardy},
if $Supp(\widehat{f_k})\subset \{\xi:|\xi|\leq 2^{k-1}\}$ for each $k\in\mathbb{Z}$, then
there exist $\{b_Q\}_{Q\in\mathcal{D}}\in \dot{f}_p^{0,q}$, a sequence of scalars $\{\lambda_j\}$, and a sequence of $\infty$-atoms $\{r_{j,Q}\}$ for $\dot{f}_p^{0,q}$ such that
\begin{equation*}
f_k(x)=\sum_{Q\in\mathcal{D}_k}{b_Q\Psi^Q(x)}=\sum_{j=1}^{\infty}{\lambda_j \sum_{Q\in\mathcal{D}_k}{ r_{j,Q}\Psi^Q (x)  }}, \quad k\in\mathbb{Z},
\end{equation*}
and
\begin{equation*}
\Big( \sum_{j=1}^{\infty}{|\lambda_j|^p}\Big)^{1/p}\lesssim \Vert b\Vert_{\dot{f}_p^{0,q}}\lesssim \big\Vert \big\{ f_k\big\}_{k\in\mathbb{Z}}\big\Vert_{L^p(\ell^q)}.
\end{equation*}
Then by applying $\ell^p\hookrightarrow \ell^1$ and Minkowski's inequality with $q/p>1$, we have
\begin{align*}
\big\Vert   \big\{m_k^{\vee}\ast f_k \big\}_{k\in\mathbb{Z}}\big\Vert_{L^p(\ell^q)} &\lesssim  \big( \sum_{j=1}^{\infty}{|\lambda_j|^p}\big)^{1/p}\sup_{n\geq 1}{\Big\Vert \Big\{ m_k^{\vee}\ast \Big(\sum_{Q\in\mathcal{D}_k}{r_{n,Q}\Psi^Q} \Big)\Big\}_{k\in\mathbb{Z}}\Big\Vert_{L^p(\ell^q)}}\\
&\lesssim \big\Vert \big\{ f_k\big\}_{k\in\mathbb{Z}}\big\Vert_{L^p(\ell^q)}\sup_{n\geq 1}{\Big\Vert \Big\{ m_k^{\vee}\ast \Big(\sum_{Q\in\mathcal{D}_k}{r_{n,Q}\Psi^Q} \Big)\Big\}_{k\in\mathbb{Z}}\Big\Vert_{L^p(\ell^q)}}.
\end{align*} 
Therefore, it suffices to show that the supremum in the above expression is dominated by a constant times $\mathcal{L}_{s}^{r}[\mm]$, which is equivalent to
\begin{equation*}
\big\Vert \big\{ m_k^{\vee}\ast A_{Q_0,k}\big\}_{k\in\mathbb{Z}}\big\Vert_{L^p(\ell^q)}\lesssim \mathcal{L}_{s}^{r}[\mm]\quad \text{ uniformly in }~  Q_0
\end{equation*} where $\{r_Q\}$ is an $\infty$-atom for $\dot{f}_p^{0,q}$ associated with $Q_0\in\mathcal{D}$ and 
\begin{equation*}
A_{Q_0,k}(x):=\sum_{Q\in\mathcal{D}_k,Q\subset Q_0}{r_{Q}\Psi^Q(x)}.
\end{equation*}

Suppose $Q_0\in\mathcal{D}_{\nu}$ for some $\nu\in\mathbb{Z}$. 
Then the condition $Q\subset Q_0$ ensures that $A_{Q_0,k}$ vanishes unless $\nu\leq k$, and thus our actual goal now is to prove
\begin{equation}\label{ouractualgoal}
\big\Vert \big\{ m_k^{\vee}\ast A_{Q_0,k}\big\}_{k\geq \nu}\big\Vert_{L^p(\ell^q)}\lesssim \mathcal{L}_{s}^{r}[\mm] \quad \text{ uniformly in }~ \nu ~\text{ and }~ Q_0.
\end{equation}

We observe that for $x\in\mathbb{R}^d$
\begin{equation}\label{estlq}
 \big\Vert \big\{ |r_Q||Q|^{-1/2}\chi_Q(x)\big\}_{Q\subset Q_0}\big\Vert_{\ell^q}\leq |Q_0|^{-1/p}
\end{equation}
and for $0<t<\infty$
\begin{equation}\label{aqest}
\Vert A_{Q_0,k}\Vert_{L^t(\rd)}\lesssim \Big\Vert \sum_{Q\in\mathcal{D}_k, Q\subset Q_0}{|r_Q||Q|^{-1/2}\chi_Q}\Big\Vert_{L^t(\rd)}\leq |Q_0|^{-1/p+1/t}
\end{equation} by using the argument in (\ref{converse2}) and the estimate (\ref{infdef}). 
Moreover, 
\begin{equation*}
Supp(\widehat{A_{Q_0,k}})=Supp(\widehat{\Psi_k})\subset \big\{\xi: |\xi|\leq 2^k \big\}.
\end{equation*}

Let $Q_0^*$ and $Q_0^{**}$ denote the concentric dilates of $Q_0$ with side length $9\ell(Q_0)$ and $81\ell(Q_0)$, respectively.
Then we write
\begin{align}\label{2goal}
\big\Vert \big\{ m_k^{\vee}\ast A_{Q_0,k}\big\}_{k\geq \nu}\big\Vert_{L^p(\ell^q)}&\lesssim \Big( \int_{Q_0^{**}}{\big\Vert \big\{ m_k^{\vee}\ast A_{Q_0,k}(x)\big\}_{k\geq \nu}\big\Vert_{\ell^q}^p}dx\Big)^{1/p}\nonumber\\
&\relphantom{=}+\Big( \int_{(Q_0^{**})^c}{\big\Vert \big\{ m_k^{\vee}\ast A_{Q_0,k}(x)\big\}_{k\geq \nu}\big\Vert_{\ell^q}^p}dx\Big)^{1/p}.
\end{align}
Using H\"older's inequality and Lemma \ref{basiclemma} with $\tau^{(s,q)}\leq \tau^{(s,p)}<r$ and
\begin{equation*}
\big|d/q-d/2\big|<s-(d/p-d/\min{(1,q)})<d/\min{(1,q)},
\end{equation*} 
the first one is controlled by
\begin{align*}
&|Q_0^{**}|^{1/p-1/q}\big\Vert \big\{ m_k^{\vee}\ast A_{Q_0,k}\big\}_{k\geq \nu}\big\Vert_{L^q(\ell^q)}\\
&\lesssim \sup_{l\in\zz}{\big\Vert m_l(2^l\cdot)\big\Vert_{L_{s-(d/p-d/\min{(1,q)})}^{r}(\rd)}} |Q_0|^{1/p-1/q}\big\Vert \big\{ A_{Q_0,k}\big\}_{k\geq \nu}\big\Vert_{L^q(\ell^q)}
\end{align*}
and we see that, from (\ref{converse2}) and (\ref{estlq}),
\begin{equation*}
\big\Vert \big\{ A_{Q_0,k}\big\}_{k\geq \nu}\big\Vert_{L^q(\ell^q)}\lesssim \big\Vert \{r_Q\}_{Q\in\mathcal{D},Q\subset Q_0}\big\Vert_{\dot{f}_q^{0,q}} \lesssim |Q_0|^{-1/p+1/q}.
\end{equation*}
Now using the embedding $L_s^r(\rd)\hookrightarrow L_{s-(d/p-d/\min{(1,q)})}^{r}(\rd)$, we obtain
\begin{equation*}
\sup_{l\in\zz}{\big\Vert m_l(2^l\cdot)\big\Vert_{L_{s-(d/p-d/\min{(1,q)})}^{r}(\rd)}}\lesssim \mathcal{L}_s^r[\mm],
\end{equation*} which finishes the proof of 
\begin{equation*}
\Big( \int_{Q_0^{**}}{\big\Vert \big\{ m_k^{\vee}\ast A_{Q_0,k}(x)\big\}_{k\geq \nu}\big\Vert_{\ell^q}^p}dx\Big)^{1/p}\lesssim \mathcal{L}_s^r[\mm].
\end{equation*}

To handle the term (\ref{2goal}) we make use of the embedding $\ell^p\hookrightarrow \ell^q$ to obtain
\begin{equation*}
(\ref{2goal})\leq \Big(\sum_{k=\nu}^{\infty}{\big\Vert m_k^{\vee}\ast A_{Q_0,k}\big\Vert_{L^p((Q_0^{**})^c)}^p} \Big)^{1/p}.
\end{equation*} 
Then, writing
\begin{equation*}
\big\Vert m_k^{\vee}\ast A_{Q_0,k}\big\Vert_{L^p((Q_0^{**})^c)}^p\leq \big\Vert m_k^{\vee}\ast \big(A_{Q_0,k}\chi_{Q_0^*}\big)\big\Vert_{L^p((Q_0^{**})^c)}^p+\big\Vert m_k^{\vee}\ast \big(A_{Q_0,k}\chi_{(Q_0^*)^c}\big)\big\Vert_{L^p((Q_0^{**})^c)}^p,
\end{equation*}
the proof of (\ref{ouractualgoal}) will be complete once we establish the estimates
 that for some $\delta>0$
\begin{equation}\label{i}
\big\Vert m_k^{\vee}\ast \big(A_{Q_0,k}\chi_{Q_0^*}\big)\big\Vert_{L^p((Q_0^{**})^c)}\lesssim 2^{-\delta(k-\nu)}\mathcal{L}_s^r[\mm],
\end{equation} 
\begin{equation}\label{ii}
\big\Vert m_k^{\vee}\ast \big(A_{Q_0,k}\chi_{(Q_0^*)^c}\big)\big\Vert_{L^p((Q_0^{**})^c)}\lesssim 2^{-\delta(k-\nu)}\mathcal{L}_s^r[\mm].
\end{equation}

It follows from the embedding $\ell^p\hookrightarrow \ell^1$ that
\begin{align*}
&\big\Vert m_k^{\vee}\ast \big(A_{Q_0,k}\chi_{Q_0^*}\big)\big\Vert_{L^p((Q_0^{**})^c)}\\
&\leq \Big({\sum_{Q\in\mathcal{D}_k,Q\subset Q_0^{*}}{\int_{({Q_0}^{**})^c }{\big| m_k^{\vee}\ast\big(A_{Q_0,k}\chi_{Q}\big)(x) \big|^p}dx}} \Big)^{1/p}\\
&\leq \Big({\sum_{Q\in\mathcal{D}_k,Q\subset Q_0^*}{\Vert A_{Q_0,k}\Vert_{L^{\infty}(Q)}^p\int_{({Q_0}^{**})^c}{\Big( \int_{{Q}}{|m_k^{\vee}(x-y)|}dy\Big)^p}dx }}\Big)^{1/p}.
\end{align*}
We notice that the assumption $r>\tau^{(s,p)}$ is equivalent to $s>d/r+d/p-d$ and therefore there exists $M>d(1-p)$ such that $s>d/r+M/p>d/r+d/p-d$.
Recall that $x_Q$ denotes the left lower corner of $Q\in\mathcal{D}$ and observe that for $Q\subset Q_0^*$
\begin{align*}
& \int_{({Q_0}^{**})^c}{\Big( \int_{{Q}}{|m_k^{\vee}(x-y)|}dy\Big)^p}dx\\
&\lesssim 2^{-kM}\ell(Q_0)^{-M+d(1-p)}\Big(\int_{{Q}}{\int_{({Q_0}^{**})^c}{\big(1+2^k|x-x_Q|\big)^{M/p}\big| m_k^{\vee}(x-y)\big|}dx}dy\Big)^p\\
&\lesssim 2^{-k(M+pd)}\ell(Q_0)^{-M+d(1-p)}\Big( \int_{\mathbb{R}^d}{\big(1+2^k|y| \big)^{M/p}|m_k^{\vee}(y)|}dy\Big)^p
\end{align*}
where we utilized H\"older's inequality if $0<p<1$ and the fact that $|x-x_Q|\lesssim |x-y|$ for $x\in (Q_0^{**})^c$ and $y\in Q\subset Q_0^*$.
Moreover,  H\"older's inequality with $r>1$ and the Hausdorff-young inequality yield that
 \begin{align*}
 \Big( \int_{\mathbb{R}^d}{\big(1+2^k|y| \big)^{M/p}|m_k^{\vee}(y)|}dy\Big)^p&=\Big( \int_{\rd}{\big(1+|y|^2\big)^{M/p}\big| \big(m_k(2^k\cdot)\big)^{\vee}(y)\big|}dy\Big)^p\\
 &\lesssim \big\Vert (1+4\pi^2|\cdot|^2)^{s/2}|(m_k(2^k\cdot))^{\vee}|\big\Vert_{L^{r'}(\rd)}^{p}\\
 &\lesssim \mathcal{L}_{s}^{r}[\mm].
 \end{align*}
 Furthermore, (\ref{infmax}) proves that for $\sigma>d/p$
 \begin{equation*}
 \Vert A_{Q_0,k}\Vert_{L^{\infty}(Q)}\lesssim \inf_{y\in Q}{\mathfrak{M}_{\sigma,2^k}A_{Q_0,k}(y)}\lesssim 2^{kd/p}\big\Vert \mathfrak{M}_{\sigma,2^k}A_{Q_0,k}\big\Vert_{L^p(Q)}.
 \end{equation*}
 Consequently,
\begin{align*}
\big\Vert m_k^{\vee}\ast \big(A_{Q_0,k}\chi_{Q_0^*}\big)\big\Vert_{L^p((Q_0^{**})^c)}&\lesssim 2^{-(k-\nu)(M/p-(d/p-d))}\mathcal{L}_{s}^{r}[\mm]\big\Vert \mathfrak{M}_{\sigma,2^k}A_{Q_0,k}\big\Vert_{L^p(Q_0)}\\
 &\lesssim 2^{-(k-\nu)(M/p-(d/p-d))}\mathcal{L}_{s}^{r}[\mm]
\end{align*} where we applied (\ref{maximal1}) with $\sigma>d/p$ and (\ref{aqest}) to obtain $\big\Vert \mathfrak{M}_{\sigma,2^k}A_{Q_0,k}\big\Vert_{L^p(Q_0)}\lesssim 1$.
Then (\ref{i}) follows with $\delta=M/p-(d/p-d)>0$.

To verify (\ref{ii}) we see that, similar to (\ref{psiast}), under the assumption (\ref{assumptionm}),
\begin{equation*}
m_k^{\vee}\ast \big(A_{Q_0,k}\chi_{({Q_0^*})^c}\big)=m_k^{\vee}\ast {\Psi_{k+1}}\ast \big(A_{Q_0,k}\chi_{({Q_0^*})^c}\big)
\end{equation*}  and, it follows from Lemma \ref{basiclemma} that
\begin{equation*}
\big\Vert m_k^{\vee}\ast \big(A_{Q_0,k}\chi_{({Q_0^*})^c}\big)\big\Vert_{L^p(\rd)}\lesssim \mathcal{L}_{s}^{r}[\mm]\big\Vert {\Psi_{k+1}}\ast \big(A_{Q_0,k}\chi_{({Q_0^*})^c}\big)\big\Vert_{L^p(\rd)}.
\end{equation*}
In addition, for sufficiently large $L>0$,
\begin{align*}
&\big\Vert \Psi_{k+1}\ast (A_{Q_0,k}\chi_{({Q_0^*})^c})\big\Vert_{L^p(\rd)}\\
&\lesssim_L \Big(\int_{\mathbb{R}^d}{\Big({\sum_{Q\in\mathcal{D}_k,Q\subset Q_0}|r_Q||Q|^{-1/2}\int_{({Q_0^*})^c}\big|\Psi_{k+1}(x-y)\big|{\dfrac{1}{(1+2^k|y-x_Q|)^{2L}}}}dy \Big)^p}dx\Big)^{1/p}\\
&\lesssim 2^{-kL}\Big(\sum_{Q\in\mathcal{D}_k,Q\subset Q_0}{|r_Q||Q|^{-1/2}} \Big)\Big(\int_{\mathbb{R}^d}{\Big(\int_{({Q_0^*})^c}{\dfrac{|\Psi_{k+1}(x-y)|}{|y-x_{Q_0}|^L}}}dy \Big)^pdx\Big)^{1/p}
\end{align*} 
because $|y-x_Q|\gtrsim \ell(Q_0)$  and
\begin{equation*}
\frac{1}{(1+2^k|y-x_Q|)^{2L}}\lesssim \big(2^{k}\ell(Q_0)\big)^{-L}\frac{\big(1+2^k|x_Q-x_{Q_0}| \big)^L}{\big(1+2^k|y-x_{Q_0}| \big)^L}\lesssim \frac{1}{\big(2^k|y-x_{Q_0}| \big)^L}
\end{equation*}for $y\in (Q_0^*)^c$ and $Q\subset Q_0$.
Due to (\ref{estlq}), we have
\begin{equation*}
\sum_{Q\in\mathcal{D}_k,Q\subset Q_0}{|r_Q||Q|^{-1/2}}\leq 2^{\nu d(1/p-1)}2^{kd}
\end{equation*}
and, using H\"older's inequality (if $p<1$), we obtain that
\begin{align*}
& \Big(\int_{\mathbb{R}^d}{\Big(\int_{({Q_0^*})^c}{\dfrac{|\Psi_{k+1}(x-y)|}{|y-x_{Q_0}|^L}}}dy \Big)^pdx\Big)^{1/p}\\
&\lesssim_N 2^{-kd(1/p-1)}\int_{(Q_0^*)^c}{\frac{1}{|y-x_{Q_0}|^L}\int_{\mathbb{R}^d}{\big(1+2^k|x-x_{Q_0}| \big)^{N/p}\big| \Psi_{k+1}(x-y)\big|}dx}dy\\
&\lesssim 2^{-kd(1/p-1)}2^{kN/p}\int_{(Q_0^*)^c}{\frac{1}{|y-x_{Q_0}|^{L-N/p}}}dy\\
&\lesssim_{L,N} 2^{-kd(1/p-1)}2^{kN/p}2^{\nu(L-N/p-d)}
\end{align*} for $N>d(1-p)$ and $L-N/p>d$.

Finally, we have
\begin{equation*}
\big\Vert \Psi_{k+1}\ast (A_{Q_0,k}\chi_{({Q_0^*})^c})\big\Vert_{L^p(\rd)}\lesssim 2^{-(k-\nu)(L-N/p+d/p-2d)}
\end{equation*}
and this leads to (\ref{ii}) with $\delta=L-N/p+d/p-2d>0$.

\subsection{\textbf{The case $0<q\leq 1$ and $q<p<\infty$ }}
Assume $s>d/\min{(1,q)}-d/2$ and $r>\tau^{(s,q)}$.
As in the proof of Theorem \ref{main2}, we select $\epsilon>0$ so that $s-\epsilon-d/r>d/\min{(1,q)}-d$.

We first consider the case $p>d/\epsilon$.
In view of Lemma \ref{charactersharp} we can write
\begin{equation*}
\big\Vert \big\{ m_k^{\vee}\ast f_k\big\}_{k\in\mathbb{Z}} \big\Vert_{L^p(\ell^q)}\lesssim \Big\Vert \sup_{P:x\in P\in\mathcal{D}}{\Big(\frac{1}{|P|}\int_P{\sum_{k=-\log_2{\ell(P)}}^{\infty}{\big|m_k^{\vee}\ast f_k(y) \big|^q}}dy \Big)^{1/q}}\Big\Vert_{L^p(x)}.
\end{equation*}
Now let  $x\in P\in \mathcal{D}_{\nu}$ for some $\nu\in\mathbb{Z}$ and define $P^*=9P$ as before. 
Using (\ref{recall1}), 
\begin{align*}
\Big(\frac{1}{|P|}\int_P{\sum_{k=\nu}^{\infty}\big|m_k^{\vee}\ast \big( \chi_{P^*}f_k\big)(x) \big|^q}dy \Big)^{1/q}&\lesssim  \mathcal{L}_{s}^{r}[\mm]\Big(\frac{1}{|P|}\int_{P^*}{\sum_{k=\nu}^{\infty}{\big(\mathfrak{M}_{\sigma,2^k}f_k(y) \big)^q}}dy\Big)^{1/q}\\
&\lesssim \mathcal{L}_{s}^{r}[\mm]\mathcal{M}_q\big(\big\Vert \{\mathfrak{M}_{\sigma,2^k}f_k(\cdot)\}_{k\in\zz}\big\Vert_{\ell^q} \big)(x) 
\end{align*} for $\sigma>d/q$.
Then the  $L^{p}$ boundedness of $\mathcal{M}_q$ and Peetre's maximal inequality (\ref{maximal1}) yield that
\begin{equation*}
\Big\Vert \sup_{P\in\mathcal{D}: x\in P}{\Big(\frac{1}{|P|}\int_P{\sum_{k=-\log_2{\ell(P)}}^{\infty}{\big|m_k^{\vee}\ast \big(\chi_{P^*}f_k\big)(y) \big|^q}}dy \Big)^{1/q}}\Big\Vert_{L^p(x)}\lesssim \mathcal{L}_{s}^{r}[\mm]\big\Vert \big\{ f_k\big\}_{k\in\mathbb{Z}}\big\Vert_{L^p(\ell^q)}.
\end{equation*}
Furthermore, it follows from (\ref{estvp}) that
\begin{align*}
\Big(\frac{1}{|P|}\int_P{\sum_{k=\nu}^{\infty}\big|m_k^{\vee}\ast \big( \chi_{(P^*)^c}f_k\big)(x) \big|^q}dy \Big)^{1/q}&\lesssim  \mathcal{L}_{s}^{r}[\mm] \Big(\frac{1}{|P|}\int_{P}{\big\Vert \{\mathfrak{M}_{\epsilon,2^k}f_k(y)\}_{k\in\zz}\big\Vert_{\ell^{\infty}}^q }dy\Big)^{1/q}\\
&\lesssim \mathcal{L}_{s}^{r}[\mm]\mathcal{M}_q\big(\big\Vert \{\mathfrak{M}_{\epsilon,2^k}f_k(\cdot)\}_{k\in\zz}\big\Vert_{\ell^{\infty}}\big)(x) .
\end{align*}
Then via the $L^{p}$ boundedness of $\mathcal{M}_q$, (\ref{maximal1}) with $\epsilon>d/p$, and the embedding $\ell^q\hookrightarrow \ell^{\infty}$ we have
\begin{equation*}
\Big\Vert \sup_{P\in\mathcal{D}: x\in P}{\Big(\frac{1}{|P|}\int_P{\sum_{k=-\log_2{\ell(P)}}^{\infty}{\big|m_k^{\vee}\ast \big(\chi_{(P^*)^c}f_k\big)(y) \big|^q}}dy \Big)^{1/q}}\Big\Vert_{L^p(x)}\lesssim\mathcal{L}_{s}^{r}[\mm]\big\Vert \big\{ f_k\big\}_{k\in\mathbb{Z}}\big\Vert_{L^p(\ell^q)}.
\end{equation*}
This proves that for $d/\epsilon<p<\infty$
\begin{equation}\label{q<t<p}
\big\Vert \big\{ m_k^{\vee}\ast f_k\big\}_{k\in\mathbb{Z}}\big\Vert_{L^p(\ell^q)}\lesssim \mathcal{L}_{s}^{r}[\mm] \Vert \big\{ f_k\big\}_{k\in\mathbb{Z}}\Vert_{L^p(\ell^q)}.
\end{equation} 

The general case $q<p<\infty$ follows from the interpolation method in Proposition \ref{complexinterpolation} between (\ref{q<t<p}) and $L^q(\ell^q)$ estimate with the same values of $s$ and $r$.

\subsection{\bf The case $1<p<\infty$ and $1<q\leq \infty$}
The proof is based on a suitable use of the complex interpolation method in Proposition \ref{complexinterpolation} and the duality property in Lemma \ref{dualitylemma}.

{\bf Step 1.} We claim that for $2< p<\infty$, $d/2-d/p=d/p'-d/2<s<d$, and $r>d/s$.
\begin{equation}\label{step1}
\big\Vert \big\{m_k^{\vee}\ast f_k\big\}_{k\in\zz}\big\Vert_{L^p(\ell^{p'})}\lesssim \mathcal{L}_{s}^{r}[\mm]\Vert \{f_k\}_{k\in\zz}\Vert_{L^p(\ell^{p'})}.
\end{equation}
Choose $\epsilon>0$ and $\wt{p}$ such that $s>d/r+\epsilon$ and $\max{(d/\epsilon,p)}<\wt{p}<\infty$.
Then, by using Lemma \ref{charactersharp} and the arguments used in obtaining (\ref{q<t<p}), we can prove that
\begin{equation*}
\big\Vert \big\{ m_k^{\vee}\ast f_k\big\}_{k\in\zz}\big\Vert_{L^{\wt{p}}(\ell^{p'})}\lesssim \mathcal{L}_{s}^{r}[\mm]\Vert \{f_k\}_{k\in\zz}\Vert_{L^{\wt{p}}(\ell^{p'})}.
\end{equation*}
Now (\ref{step1}) follows from the interpolation with the $L^{p'}(\ell^{p'})$ boundedness with the same values of $r$ and $s$ because $p'<p<\wt{p}$.

{\bf Step 2.} We prove that for $1< p<2$, $d/p-d/2=d/2-d/p'<s<d$, and $r>d/s$,
\begin{equation}\label{step2}
\big\Vert \big\{m_k^{\vee}\ast f_k\big\}_{k\in\zz}\big\Vert_{L^p(\ell^{p'})}\lesssim \mathcal{L}_{s}^{r}[\mm]\Vert \{f_k\}_{k\in\zz}\Vert_{L^p(\ell^{p'})}.
\end{equation}
Suppose that $\{f_k\}_{k\in\zz}\in L_A^{p}(\ell^{p'})$. 
By using Lemma \ref{dualitylemma}, the left-hand side of (\ref{step2}) can be dualized and estimated by
\begin{equation*}
\sup_{\{r_Q\}_{Q\in\mathcal{D}}:\Vert \{r_Q\}_{Q\in\mathcal{D}}\Vert_{\dot{f}_{p'}^{0,p}}\leq 1}{\Big|\int_{\rd}{\sum_{k\in\zz}{m_k^{\vee}\ast f_k(x)\mathfrak{V}_k^{\Psi_0}\big(\{r_Q\}_{Q\in\mathcal{D}}\big)(x)}}dx \Big|},
\end{equation*}
which can be also written as
\begin{equation*}
\sup_{\{r_Q\}_{Q\in\mathcal{D}}:\Vert \{r_Q\}_{Q\in\mathcal{D}}\Vert_{\dot{f}_{p'}^{0,p}}\leq 1}{\Big|\int_{\rd}{\sum_{k\in\zz}{ f_k(x)m_k^{\vee}\ast\big(\mathfrak{V}_k^{\Psi_0}\big(\{r_Q\}_{Q\in\mathcal{D}}\big)\big)(x)}}dx \Big|}.
\end{equation*}
This is clearly majorized, using H\"older's inequality, by
\begin{equation*}
\big\Vert \{f_k\}_{k\in\zz}\big\Vert_{L^{p}(\ell^{p'})}\sup_{\{r_Q\}_{Q\in\mathcal{D}}:\Vert \{r_Q\}_{Q\in\mathcal{D}}\Vert_{\dot{f}_{p'}^{0,p}}\leq 1}{\Big\Vert \Big\{m_k^{\vee}\ast \big(\mathfrak{V}_k^{\Psi_0}\big(\{r_Q\}_{Q\in\mathcal{D}}\big)\big) \Big\}_{k\in\zz}\Big\Vert_{L^{p'}(\ell^{p})}}.
\end{equation*} 
Moreover, the result in Step 1 and (\ref{discretecha2}) yield that the $L^{p'}(\ell^p)$-norm in the above expression is smaller than a constant times
\begin{equation*}
 \mathcal{L}_r^s[\mm]\big\Vert \big\{  \mathfrak{V}_k^{\Psi_0}\big(\{r_Q\}_{Q\in\mathcal{D}}\big)\big\}_{k\in\zz}\big\Vert_{L^{p'}(\ell^p)}\lesssim  \mathcal{L}_r^s[\mm]\Vert \{r_Q\}_{Q\in\mathcal{D}}\Vert_{\dot{f}_{p'}^{0,p}},
\end{equation*}
which proves (\ref{step2}).

{\bf Step 3.} 
Let $1<p<\infty$ and $q$ is between $p$ and $p'$ so that $|d/p-d/2|>|d/q-d/2|$. Suppose $|d/p-d/2|<s<d$ and $r>d/s$.
We interpolate two cases $(p,p')$ and $(p,p)$ by using Proposition \ref{complexinterpolation} with the same values of $s$ and $r$.
Then we establish the estimate
\begin{equation*}
\big\Vert \big\{m_k^{\vee}\ast f_k\big\}_{k\in\zz}\big\Vert_{L^p(\ell^{q})}\lesssim \mathcal{L}_{s}^{r}[\mm]\Vert \{f_k\}_{k\in\zz}\Vert_{L^p(\ell^{q})}.
\end{equation*}

{\bf Step 4.} 
Let $1<q<\infty$ and $p$ is between $q$ and $q'$ so that $|d/q-d/2|>|d/p-d/2|$. Suppose $|d/q-d/2|<s<d$ and $r>d/s$.
We interpolate two cases $(q',q)$ and $(q,q)$ by using Proposition \ref{complexinterpolation} with the same values of $s$ and $r$.
Then we have the estimate
\begin{equation*}
\big\Vert \big\{m_k^{\vee}\ast f_k\big\}_{k\in\zz}\big\Vert_{L^p(\ell^{q})}\lesssim \mathcal{L}_{s}^{r}[\mm]\Vert \{f_k\}_{k\in\zz}\Vert_{L^p(\ell^{q})}.
\end{equation*}

{\bf Step 5.} Let $1<p<\infty$ and $q=\infty$. Suppose $d/2<s<d$ and $r>d/s$.
An argument similar to that used in Step 2, with Lemma \ref{dualitylemma} and the result for $1<p<\infty$ and $q=1$, leads to the desired estimate. We skip the details to avoid unnecessary repetition.


\section{Proof of Theorem \ref{negativemain}}

We now describe the proof of Theorem \ref{negativemain}, using the ideas in \cite{Ch_Se, Gr_Park}.
Suppose $0<p<\infty$ or $p=q=\infty$.
\subsection{Necessary conditions for vector-valued operator inequalities} 
We investigate necessary conditions for the inequality that for $K\in\mathcal{E}(1)$, 
\begin{eqnarray}\label{boundassumption}
\big\Vert \big\{ 2^{kd}K(2^k\cdot)\ast f_k\big\}_{k\in\zz}\big\Vert_{L^p(\ell^q)} \leq \mathcal{A}\big\Vert \{f_k\}_{k\in\zz}\big\Vert_{L^p(\ell^q)}, \qquad f_k\in \mathcal{E}(2^{k-1})
\end{eqnarray}  for some $\mathcal{A}>0$.

An immediate consequence is that 
\begin{equation}\label{immediatecon}
\Vert K\Vert_{L^p(\rd)}\lesssim_{p} \mathcal{A},
\end{equation} which follows from setting $f_0=4^d\Psi_0(4\cdot)$ and $f_k=0$ for $k\not= 0$ so that 
\begin{equation*}
\Vert K \Vert_{L^p(\rd)}=\big\Vert \big\{ 2^{kd}K(2^k\cdot)\ast f_k\big\}_{k\in\zz}\big\Vert_{L^p(\ell^q)}\leq \mathcal{A}\Vert 4^d\Psi_0(4\cdot)\Vert_{L^p(\rd)}\lesssim \mathcal{A}.
\end{equation*}

Moreover,  it is known in \cite{Ch_Se} that if (\ref{boundassumption}) holds for $0<q\leq p<\infty$, then
\begin{eqnarray}\label{qbound}
\Vert K\Vert_{L^q(\rd)}\lesssim_{p,q} \mathcal{A}.
\end{eqnarray}

Now we consider the case $1<p,q<\infty$.
Using the dualization argument in Lemma \ref{dualitylemma}, which was used to obtain (\ref{step2}), the $L^p(\ell^q)$ boundedness also implies that 
\begin{align*}
&\Vert \{2^{kd}K(2^k\cdot)\ast f_k\}_{k\in\zz}\Vert_{L^{p'}(\ell^{q'})}\\
&\lesssim \big\Vert \{f_k\}_{k\in\zz}\big\Vert_{L^{p'}(\ell^{q'})}\sup_{\{r_Q\}_{Q\in\mathcal{D}}:\Vert \{r_Q\}_{Q\in\mathcal{D}}\Vert_{\dot{f}_{p}^{0,p}}\leq 1}{\Big\Vert \Big\{2^{kd}K(2^k\cdot)\ast \big(\mathfrak{V}_k^{\Psi_0}\big(\{r_Q\}_{Q\in\mathcal{D}}\big)\big) \Big\}_{k\in\zz}\Big\Vert_{L^{p}(\ell^{q})}}\\
&\lesssim \mathcal{A} \big\Vert \{f_k\}_{k\in\zz}\big\Vert_{L^{p'}(\ell^{q'})}.
\end{align*}
Therefore it is clear from (\ref{immediatecon}) that 
\begin{equation*}
\Vert K\Vert_{L^{p'}(\rd)}\lesssim_p \mathcal{A}
\end{equation*} and 
if $1<p\leq q< \infty$ ( that is, $1< q'\leq p'<\infty$ ), then
 we have  
\begin{equation*}
\Vert K\Vert_{L^{q'}(\rd)}\lesssim_{p,q} \mathcal{A}
\end{equation*}
from the estimate (\ref{qbound}).

We note that if $K\in\mathcal{E}(1)$, then Bernstein's inequality shows that 
\begin{equation}\label{kembedding}
\Vert K\Vert_{L^{r_1}(\rd)}\lesssim \Vert K\Vert_{L^{r_0}(\rd)} \qquad \text{ for }\quad r_0< r_1.
\end{equation}
Therefore, we conclude that 
\begin{lemma}
Let $0<p<\infty$ and $0<q\leq \infty$. Suppose that $K\in\mathcal{E}(1)$.
If (\ref{boundassumption}) holds, then
\begin{equation*}
\Vert K\Vert_{L^{\min{(p,q,p',q')}}(\rd)}\lesssim_{p,q,d}\mathcal{A}
\end{equation*} where we adhere to the standard convention that $p'=\infty$ for $p\leq 1$ and $q'=\infty$ for $q\leq 1$.
\end{lemma}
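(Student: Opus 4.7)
The proof is essentially a compilation of the bounds established in the paragraphs preceding the statement, stitched together by Bernstein's inequality. The plan is to produce, for each of the four exponents $p$, $q$, $p'$, $q'$ (whenever finite, i.e.\ not set to $\infty$ by the convention), a kernel estimate $\Vert K\Vert_{L^r}\lesssim \mathcal{A}$, and then to observe that the smallest such $r$ is precisely $\min(p,q,p',q')$.

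First, the $L^p$ bound is already derived in (\ref{immediatecon}) from the single-index test $f_0=4^d\Psi_0(4\cdot)$, and when $q\le p$ the Christ--Seeger necessary condition (\ref{qbound}) supplies $\Vert K\Vert_{L^q}\lesssim \mathcal{A}$. These two observations already settle the cases $p\le 1$ or $q\le 1$, since the convention sends the associated dual exponents to $\infty$ and removes them from the minimum. When $1<p<\infty$ and $1<q<\infty$, I would apply the dualization of Lemma \ref{dualitylemma} to (\ref{boundassumption}) to obtain the same type of inequality for the reflected kernel $\wt{K}(x)=K(-x)$ at the indices $(p',q')$; reapplying (\ref{immediatecon}), together with (\ref{qbound}) in the subcase $p\le q$ (so that $q'\le p'$), then yields $\Vert K\Vert_{L^{p'}}\lesssim\mathcal{A}$ and, when applicable, $\Vert K\Vert_{L^{q'}}\lesssim\mathcal{A}$, noting that $\Vert \wt K\Vert_{L^r}=\Vert K\Vert_{L^r}$.

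The endpoint $q=\infty$ with $1<p<\infty$ is the one place where Lemma \ref{dualitylemma} does not apply directly; here I would use the variant duality argument sketched in Step~5 of the proof of Theorem \ref{main1} to transfer the $L^p(\ell^\infty)$ hypothesis to an $L^{p'}(\ell^1)$ boundedness for $\wt{K}$, after which (\ref{qbound}) at the indices $(p',1)$ delivers $\Vert K\Vert_{L^1}=\Vert K\Vert_{L^{q'}}\lesssim\mathcal{A}$. With all the relevant $L^r$ estimates in hand, Bernstein's inequality (\ref{kembedding}), valid because $K\in\mathcal{E}(1)$, propagates each of them to every larger exponent; hence the smallest exponent among the finite members of $\{p,q,p',q'\}$, namely $\min(p,q,p',q')$, already carries the desired bound. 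The only delicate point is arranging the duality in the $q=\infty$ case, and I expect that to be the main obstacle, modelled directly on Step~5 of the proof of Theorem \ref{main1}.
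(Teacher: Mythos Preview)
Your proposal is correct and matches the paper's own argument: the lemma in the paper is stated with ``Therefore, we conclude that'' immediately after the paragraphs establishing (\ref{immediatecon}), (\ref{qbound}), the dualized bounds via Lemma~\ref{dualitylemma}, and Bernstein's inequality (\ref{kembedding}), so your compilation of those ingredients is exactly what the paper does.

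One small point about the endpoint $q=\infty$, $1<p<\infty$, which you flag as the main obstacle. Your reference to Step~5 is slightly off: in Step~5 the duality runs from an already-known $L^{p'}(\ell^1)$ estimate to the desired $L^p(\ell^\infty)$ estimate, via Lemma~\ref{dualitylemma} applied with the lemma's $(p,q)=(p',1)$. Here you need the reverse passage --- from the assumed $L^p(\ell^\infty)$ bound to an $L^{p'}(\ell^1)$ bound for the reflected kernel --- and that would require Lemma~\ref{dualitylemma} with $q=\infty$, which lies outside its hypotheses. The clean fix is to bypass Lemma~\ref{dualitylemma} entirely and use the ordinary Banach duality $(L^{p'}(\ell^1))^*=L^p(\ell^\infty)$: for $\{g_k\}\in L_A^{p'}(\ell^1)$, pair $\{K_k\ast g_k\}$ against an arbitrary $\{h_k\}\in L^p(\ell^\infty)$, observe that the Fourier support of $K_k\ast g_k$ lets you replace $h_k$ by $\Psi_{k+c}\ast h_k$ (which lies in $\mathcal{E}(A'2^k)$ and still satisfies $\Vert\{\Psi_{k+c}\ast h_k\}\Vert_{L^p(\ell^\infty)}\lesssim\Vert\{h_k\}\Vert_{L^p(\ell^\infty)}$ via the maximal function), then move the convolution across and invoke (\ref{boundassumption}) for $\wt K$. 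The paper's preceding discussion also glosses over this endpoint, so your instinct that it needs separate treatment is right; only the mechanism is slightly different from what you cite.
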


On the other hand, when $p\geq 1$,  (\ref{boundassumption}) implies that the convolution operator with $K$ is bounded in $L^p(\rd)$. Indeed, for any $f\in L^{p}(\rd)$ let 
\begin{equation*}
f_0:= 4^d\Psi_0(4\cdot) \ast f, \qquad \text{and }\qquad f_k:=0, ~~ k\not=0.
\end{equation*}
Then using the identity $K=4^d\Psi_0(4\cdot)\ast K$, we have
\begin{equation*}
\Vert K\ast f\Vert_{L^p(\rd)}=\big\Vert \{2^{kd}K(2^k\cdot)\ast f_k\}_{k\in\zz}\big\Vert_{L^p(l^q)}\leq \mathcal{A}\Vert f_0\Vert_{L^p(\rd)}\lesssim \mathcal{A}\Vert f\Vert_{L^p(\rd)}
\end{equation*} where the last inequality follows from Young's inequality with $p\geq 1$.
Hence it follows that
\begin{equation*}
\Vert \wh{K}\Vert_{L^{\infty}(\rd)}\lesssim \mathcal{A}.
\end{equation*}
By additionally assuming that $K\in\mathcal{E}(1)$ is a nonnegative function, we obtain that
\begin{equation*}
\Vert K\Vert_{L^1(\rd)}=\wh{K}(0)\leq \Vert \wh{K}\Vert_{L^{\infty}(\rd)}\lesssim \mathcal{A},
\end{equation*}
and this, together with (\ref{kembedding}), yields the following lemma:
\begin{lemma}\label{nececondition}
Let $0<p<\infty$ and $0<q\leq \infty$. Suppose that $K\in\mathcal{E}(1)$ is a nonnegative function on $\rd$.
If (\ref{boundassumption}) holds, then
\begin{equation*}
\Vert K\Vert_{L^{\min{(1,p,q)}}(\rd)}\lesssim_{p,q,d}\mathcal{A}.
\end{equation*}

\end{lemma}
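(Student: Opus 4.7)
The plan is to combine the displayed estimates preceding the statement, splitting into cases according to whether $\min(1,p,q)$ equals $\min(p,q)$ or equals $1$. Combining (\ref{immediatecon}), which gives $\Vert K\Vert_{L^p} \lesssim \mathcal{A}$ unconditionally, with (\ref{qbound}), which gives $\Vert K\Vert_{L^q} \lesssim \mathcal{A}$ whenever $q \le p < \infty$, already yields $\Vert K\Vert_{L^{\min(p,q)}} \lesssim \mathcal{A}$ without any use of nonnegativity, and this closes the subcases $\min(p,q) \le 1$ since there $\min(1,p,q) = \min(p,q)$. The case $q = \infty$ reduces to $\min(1,p,q) = \min(1,p)$ and is absorbed into the $p \le q$ branch of (\ref{immediatecon}).

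The remaining case $p, q > 1$ is precisely where nonnegativity enters. Here I run the argument sketched just before the lemma: test (\ref{boundassumption}) on the sequence $f_0 := 4^d\Psi_0(4\cdot)\ast f$ (for arbitrary $f \in L^p$) and $f_k := 0$ for $k \ne 0$. The reproducing identity $K = 4^d\Psi_0(4\cdot)\ast K$ holds because $\wh{\Psi_0}(\xi/4) = 1$ on $Supp(\wh{K}) \subset \{|\xi|\le 2\}$, and Young's inequality, available since $p\ge 1$, bounds $\Vert f_0\Vert_{L^p}$ by a constant times $\Vert f\Vert_{L^p}$. Thus the only nonzero term in the $\ell^q$-sum on the left-hand side of (\ref{boundassumption}) equals $K\ast f$, and we obtain $\Vert K\ast f\Vert_{L^p} \lesssim \mathcal{A}\Vert f\Vert_{L^p}$. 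This forces the multiplier $\wh{K}$ to lie in $L^\infty$ with norm $\lesssim \mathcal{A}$, after which nonnegativity closes the loop via $\Vert K\Vert_{L^1} = \wh{K}(0) \le \Vert \wh{K}\Vert_{L^\infty} \lesssim \mathcal{A}$. Finally, Bernstein's inequality (\ref{kembedding}) reconciles the two cases at the common exponent $\min(1,p,q)$.

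All ingredients are already on the page, so I do not expect a substantive obstruction. The one mildly delicate point is verifying that the chosen test sequence is admissible for (\ref{boundassumption}), i.e., that $f_0 = 4^d\Psi_0(4\cdot)\ast f$ matches the frequency-support convention from the definition of $\mathcal{E}$; since the constant $A$ plays only a minor role and affects only multiplicative constants, this scaling match is routine rather than substantive.
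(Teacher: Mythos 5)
Your proof is correct and follows essentially the same route as the paper, which simply assembles the displayed estimates (\ref{immediatecon}), (\ref{qbound}), and the nonnegativity argument $\Vert K\Vert_{L^1}=\wh{K}(0)\le\Vert\wh{K}\Vert_{L^\infty}\lesssim\mathcal{A}$ preceding the lemma; your explicit case split on whether $\min(1,p,q)$ equals $\min(p,q)$ or $1$ is exactly the implicit logic there, and your handling of the frequency-support normalization of the test sequence matches the paper's own appeal to the irrelevance of the constant $A$.
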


\subsection{Construction of examples}
Note that $s<d/\min{(1,p,q)}$ implies $\min{(1,p,q)}<\tau^{(s,p,q)}$. 
Choosing
\begin{equation}\label{tgamma}
t:=\frac{d}{\min{(1,p,q)}} \qquad \text{ and }\qquad  \frac{2}{\tau^{(s,p,q)}}<\gamma<\frac{2}{\min{(1,p,q)}},
\end{equation}
we define
\begin{equation*}
\mathcal{H}^{(t,\gamma)}(x):=\dfrac{1}{(1+4\pi^2|x|^2)^{t/2 }}\dfrac{1}{(1+\ln(1+4\pi^2|x|^2))^{\gamma/2}}.
\end{equation*}
Then it is proved in \cite{Gr_Park} that
\begin{align}\label{esth}
\big|(I-\Delta)^{s/2}\wh{\mathcal{H}^{(t,\gamma)}}(\xi)\big|&=\big| \wh{\mathcal{H}^{(t-s,\gamma)}}(\xi)\big|\nonumber\\
&\lesssim_{t,\gamma,d}\begin{cases}
e^{-|\xi|/2} & \text{ for }~ |\xi|> 1\\
|\xi|^{-(d-t+s)}(1+2\ln{|\xi|^{-1}})^{-\gamma/2} &\text{ for }~ |\xi|\leq 1
\end{cases}
\end{align}
where $d-t+s=s-d/\min{(1,p,q)}+d>0$.

Let $\eta\in S(\rd)$ have the properties that
$\eta\geq 0$, $\eta(x)\geq c>0$ on $\{x\in\rd:|x|\leq 1/100\}$ for some $c>0$, and $Supp(\wh{\eta})\subset \{\xi\in\rd:|\xi|\leq 1/10\}$.
We define 
\begin{equation*}
K^{(t,\gamma)}(x):=\mathcal{H}^{(t,\gamma)}\ast \eta(x), \qquad K_k^{(t,\gamma)}(x):=2^{kd}K^{(t,\gamma)}(2^kx)
\end{equation*}
and \begin{equation*}
m_k^{(t,\gamma)}:= \wh{K_k^{(t,\gamma)}}.
\end{equation*}

We first observe that
\begin{equation*}
m_k^{(t,\gamma)}(2^k\xi)=\wh{K^{(t,\gamma)}}(\xi)=\wh{\mathcal{H}^{(t,\gamma)}}(\xi)\wh{\eta}(\xi)
\end{equation*}
and this yields that
\begin{align*}
\mathcal{L}_s^{\tau^{(s,p,q)}}[\mm]= \big\Vert \wh{\mathcal{H}^{(t,\gamma)}}\wh{\eta}\big\Vert_{L_s^{\tau^{(s,p,q)}}}\lesssim \big\Vert (I-\Delta)^{s/2}\wh{\mathcal{H}^{(t,\gamma)}}\big\Vert_{L^{\tau^{(x,p,q)}}(\rd)}
\end{align*}
where the Kato-Ponce inequality is applied.
Then using (\ref{esth}), we obtain that
\begin{equation*}
\mathcal{L}_s^{\tau^{(s,p,q)}}[\mm]\lesssim 1+\Big( \int_{|\xi|\leq 1}{\frac{1}{|\xi|^{\tau^{(s,p,q)}(d-t+s)}}\frac{1}{(1+2\ln{|\xi|^{-1}})^{\tau^{(s,p,q)}\gamma/2}}}d\xi\Big)^{1/\tau^{(s,p,q)}}
\end{equation*}
and using change of variables, the second term is estimated by a constant times
\begin{equation*}
\int_1^{\infty}{\frac{1}{u}\frac{1}{(1+2\ln{u})^{\tau^{(s,p,q)}\gamma/2}}}{du}<\infty
\end{equation*}
because $\tau^{(s,p,q)}(d-t+s)=d$ and $\tau^{(s,p,q)}\gamma/2>1$ with the choice of $t$ and $\gamma$ in (\ref{tgamma}).
Finally, we have 
\begin{equation*}
\mathcal{L}_s^{\tau^{(s,p,q)}}[\mm]\lesssim 1.
\end{equation*}

Now we suppose (\ref{mainest1}) holds with $m_k=m_k^{(t,\gamma)}$ and $A=2^{-2}$, which is equivalent to (\ref{boundassumption}) with $K=K^{(t,\gamma)}$ and $\mathcal{A}=\mathcal{L}_s^{\tau^{(s,p,q)}}[\mm]$.
Then it follows from Lemma \ref{nececondition} that
\begin{equation}\label{contradiction}
\Vert K^{(t,\gamma)}\Vert_{L^{\min{(1,p,q)}}(\rd)}\lesssim  \mathcal{L}_{s}^{\tau^{(s,p,q)}}[\mm]\lesssim 1.
\end{equation} 
 since $K^{(t,\gamma)}$ is a nonnegative function.
 However,
\begin{equation*}
\Vert K^{(t,\gamma)}\Vert_{L^{\min{(1,p,q)}}(\rd)}=\big\Vert \mathcal{H}^{(t,\gamma)}\ast \eta\big\Vert_{L^{\min{(1,p,q)}(\rd)}}\gtrsim \Vert \mathcal{H}^{(t,\gamma)}\Vert_{L^{\min{(1,p,q)}}}
\end{equation*} where the inequality follows from the fact that $\mathcal{H}^{(t,\gamma)}, \eta\geq 0$ and $\mathcal{H}^{(t,\gamma)}(x-y)\geq \mathcal{H}^{(t,\gamma)}(x)\mathcal{H}^{(t,\gamma)}(y)$.
This yields that
\begin{align*}
&\Vert K^{(t,\gamma)}\Vert_{L^{\min{(1,p,q)}}(\rd)}\\
&\gtrsim \Big( \int_{\rd}{\frac{1}{(1+4\pi^2|x|^2)^{d/2}}\frac{1}{(1+\ln(1+4\pi^2|x|^2))^{\gamma\min{(1,p,q)}/2}}}dx\Big)^{1/\min{(1,p,q)}}=\infty,
\end{align*}
since $\gamma\min{(1,p,q)/2}<1$,
which contradicts (\ref{contradiction}).

\appendix
\section{Proof of (\ref{membedding})}
(\ref{membedding}) is a consequence of the following lemma:
\begin{lemma}
Let $1<r_0<r_1<\infty$ and $s\geq 0$. Suppose that $f\in L_s^{r_1}(\rd)$ is supported in $\{x\in\rd:|x|\leq B \}$ for some $B>0$.
Then $f\in L_s^{r_0}(\rd)$ and indeed,
\begin{equation*}
\Vert f\Vert_{L_s^{r_0}(\rd)}\lesssim_s B^{d/r_0-d/r_1}\Vert f\Vert_{L_s^{r_1}(\rd)}.
\end{equation*}
\end{lemma}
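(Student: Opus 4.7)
Fix $\chi\in C_c^\infty(\mathbb{R}^d)$ with $\chi=1$ on $B(0,2)$ and $\mathrm{supp}(\chi)\subset B(0,3)$, and set $\chi_B(x):=\chi(x/B)$, so that $\chi_B\equiv 1$ on $\mathrm{supp}(f)$. The plan is to decompose
\[
(I-\Delta)^{s/2}f=\chi_B(I-\Delta)^{s/2}f+(1-\chi_B)(I-\Delta)^{s/2}f
\]
and estimate each piece. For the local part, H\"older's inequality with $1/r_0=1/p+1/r_1$ yields
\[
\|\chi_B(I-\Delta)^{s/2}f\|_{L^{r_0}}\leq\|\chi_B\|_{L^p}\|(I-\Delta)^{s/2}f\|_{L^{r_1}}\lesssim B^{d/r_0-d/r_1}\|f\|_{L_s^{r_1}},
\]
since $\|\chi_B\|_{L^p}\lesssim B^{d/p}=B^{d/r_0-d/r_1}$.

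For the nonlocal part, let $K_s:=\bigl((1+4\pi^2|\cdot|^2)^{s/2}\bigr)^\vee$ denote the distributional convolution kernel of $(I-\Delta)^{s/2}$. Because $|\partial^\alpha(1+4\pi^2|\xi|^2)^{s/2}|\lesssim(1+|\xi|)^{s-|\alpha|}$, integration by parts shows $|z^\alpha K_s(z)|\lesssim 1$ whenever $|\alpha|>s+d$, hence $|K_s(z)|\lesssim_N|z|^{-N}$ for any $N>s+d$ and $z\neq 0$. For $x\in\mathrm{supp}(1-\chi_B)\subset\{|x|\geq 2B\}$ and $y\in\mathrm{supp}(f)\subset B(0,B)$ we have $|x-y|\geq|x|/2$, so
\[
|(I-\Delta)^{s/2}f(x)|\leq\int|K_s(x-y)||f(y)|\,dy\lesssim_N|x|^{-N}\|f\|_{L^1}\lesssim_N|x|^{-N}B^{d-d/r_1}\|f\|_{L^{r_1}},
\]
by H\"older on $\mathrm{supp}(f)$. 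Taking the $L^{r_0}$ norm over $\{|x|\geq 2B\}$ with $N$ large enough produces the desired bound $B^{d/r_0-d/r_1}\|f\|_{L_s^{r_1}}$, provided $B\gtrsim 1$.

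The main obstacle is the regime $B\ll 1$: the crude pointwise bound $|K_s(z)|\lesssim|z|^{-N}$ does not reflect the finer behavior of $K_s$ (singular of order $|z|^{-d-s}$ at the origin and Schwartz-like at infinity), and the direct nonlocal integration yields the bad exponent $B^{d/r_0-d/r_1+d-N}$ which blows up as $B\to 0$. I would resolve this by separating the inhomogeneous and homogeneous contributions through the Mikhlin-type equivalence $\|f\|_{L_s^r}\approx\|f\|_{L^r}+\|(-\Delta)^{s/2}f\|_{L^r}$ for $1<r<\infty$, $s\geq 0$, which is a consequence of the fact that the symbol $(1+|\xi|^2)^{s/2}/(1+|\xi|^s)$ and its reciprocal are Mikhlin multipliers. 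The inhomogeneous piece $\|f\|_{L^{r_0}}\lesssim B^{d/r_0-d/r_1}\|f\|_{L^{r_1}}$ is immediate from H\"older on $\mathrm{supp}(f)$. For the homogeneous piece I would rescale $\tilde f(y):=f(By)$, supported in $B(0,1)$: the $B=1$ local/nonlocal argument applied to the Riesz-type kernel of $(-\Delta)^{s/2}$ (homogeneous of degree $-d-s$, hence $L^{r_0}$-integrable on $\{|x|\geq 1\}$ for every $s>0$), together with the Poincar\'e-type bound $\|\tilde f\|_{L^{r_1}}\lesssim\|(-\Delta)^{s/2}\tilde f\|_{L^{r_1}}$ on compactly supported $\tilde f$ (coming from Hardy--Littlewood--Sobolev followed by H\"older on the compact support), yields $\|(-\Delta)^{s/2}\tilde f\|_{L^{r_0}}\lesssim\|(-\Delta)^{s/2}\tilde f\|_{L^{r_1}}$. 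The homogeneous scaling identity $\|(-\Delta)^{s/2}f\|_{L^r}=B^{d/r-s}\|(-\Delta)^{s/2}\tilde f\|_{L^r}$ then converts this to the uniform bound $\|(-\Delta)^{s/2}f\|_{L^{r_0}}\lesssim B^{d/r_0-d/r_1}\|(-\Delta)^{s/2}f\|_{L^{r_1}}$, which combined with the H\"older bound on $\|f\|_{L^{r_0}}$ completes the proof.
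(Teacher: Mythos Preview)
Your route is genuinely different from the paper's. The paper never touches the Bessel kernel: it fixes $\Gamma\in S(\rd)$ with $\Gamma=1$ on $\{|x|\le B\}$ and $Supp(\Gamma)\subset\{|x|\le 2B\}$, sets $Tg:=\Gamma g$, and for each integer $n\ge 0$ obtains
\[
\|Tg\|_{L^{r_0}_n}\lesssim B^{d/r_0-d/r_1}\|Tg\|_{L^{r_1}_n}\lesssim_n B^{d/r_0-d/r_1}\|g\|_{L^{r_1}_n},
\]
the first inequality by H\"older (all derivatives $\partial^\alpha(Tg)$ are still supported in $\{|x|\le 2B\}$) and the second by Kato--Ponce. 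Complex interpolation between integer values of $n$ then gives $\|Tg\|_{L^{r_0}_s}\lesssim B^{d/r_0-d/r_1}\|g\|_{L^{r_1}_s}$ for all $s\ge 0$, and specializing to $g$ supported in $\{|x|\le B\}$ (so $g=Tg$) finishes. This is considerably shorter than your local/nonlocal splitting plus rescaling plus homogeneous--inhomogeneous comparison, and it handles all $s\ge 0$ uniformly without any case analysis in $B$ or in the size of $s$. What your argument buys is an explicit picture of the off-support decay of $(I-\Delta)^{s/2}f$, but at the price of substantially more machinery.

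One point in your plan does need repair. The Poincar\'e-type bound $\|\tilde f\|_{L^{r_1}}\lesssim\|(-\Delta)^{s/2}\tilde f\|_{L^{r_1}}$ via Hardy--Littlewood--Sobolev as you describe it requires $r_1<d/s$, i.e.\ $s<d/r_1$; otherwise the exponent $q$ with $1/q=1/r_1-s/d$ is not in $(1,\infty)$ and HLS is unavailable. For $s\ge d/r_1$ you need one more step: pick $0<s'<d/r_1$, use HLS to get $\|\tilde f\|_{L^{r_1}}\lesssim\|(-\Delta)^{s'/2}\tilde f\|_{L^{r_1}}$, then apply a Gagliardo--Nirenberg interpolation $\|(-\Delta)^{s'/2}\tilde f\|_{L^{r_1}}\lesssim\|\tilde f\|_{L^{r_1}}^{1-s'/s}\|(-\Delta)^{s/2}\tilde f\|_{L^{r_1}}^{s'/s}$ and absorb the lower-order factor. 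With this patch your argument goes through.
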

\begin{proof}
Let $\Gamma\in S(\rd)$ satisfy $Supp(\Ga)\subset \{x\in\rd:|x|\leq 2B\}$ and $\Gamma(x)=1$ for $|x|\leq B$.
Define the multiplication operator $T$ by
\begin{equation*}
Tg(x):=g(x)\Gamma(x) \qquad \text{ for }~ g\in S(\rd).
\end{equation*}
Using H\"older's inequality and the Kato-Ponce inequality \cite{Ka_Po}, we obtain that for each $n\in\nn_0$,
\begin{equation*}
\Vert Tg\Vert_{L_n^{r_0}(\rd)}\lesssim B^{d/r_0-d/r_1}\Vert Tg\Vert_{L_n^{r_1}(\rd)}\lesssim_n B^{d/r_0-d/r_1}\Vert g\Vert_{L_n^{r_1}(\rd)}.
\end{equation*}
Then we interpolate these estimates to extend to 
\begin{equation}\label{extension}
\Vert Tg\Vert_{L_s^{r_0}(\rd)}\lesssim  B^{d/r_0-d/r_1}\Vert g\Vert_{L_s^{r_1}(\rd)}
\end{equation} for all $s\geq 0$.

Now suppose $g\in S(\rd)$ has compact support in $\{x\in \rd: |x|\leq B\}$ so that $g=Tg$. Then (\ref{extension}) implies that
\begin{equation*}
\Vert g\Vert_{L_s^{r_0}(\rd)}\lesssim B^{d/r_0-d/r_1}\Vert g\Vert_{L_s^{r_1}(\rd)},
\end{equation*}
from which the desired result follows, using the density of $S(\rd)$ in the two Banach spaces $L_s^{r_0}(\rd)$ and $L_s^{r_1}(\rd)$.

\end{proof}

\section*{Acknowledgement}

{Part of this research was carried out during my stay at the University of Missouri-Columbia. I would like to thank Professor L. Grafakos for his invitation, hospitality, and very useful discussions during the stay. I also would like to express gratitude to the anonymous referees for the careful reading and very useful comments.}


\begin{thebibliography}{99}

\bibitem{Ba_Sa}
A. Baernstein and E. T. Sawyer,  \emph{Embedding and multiplier theorems for $H^p(\mathbb{R}^n)$}, Mem. Amer. Math. Soc. \textbf{318}
(1985).

\bibitem{Ca}
A.P. Calder\'on, \emph{Intermediate spaces and interpolation, the complex method}, Studia Math. \textbf{24} (1964) 113-190.



\bibitem{Ca_To}
A.P. Calder\'on and A. Torchinsky,  \emph{Parabolic maximal functions associated with a distribution, II}, Adv. Math. \textbf{24}
(1977) 101-171.


\bibitem{Ch_Se}
M. Christ and A. Seeger,  \emph{Necessary conditions for vector-valued operator inequalities in harmonic analysis}, Proc. London Math. Soc. \textbf{93(3)}
(2006) 447-473.

\bibitem{Fe_St}
C. Fefferman and E. M. Stein,  \emph{Some maximal inequalities}, Amer. J. Math. \textbf{93} (1971) 107-115.

\bibitem{Fe_St1}
C. Fefferman and E. M. Stein,  \emph{$H^p$ spaces of several variables}, Acta Math. \textbf{129}
(1972) 137-193.

\bibitem{Fr_Ja0}
M. Frazier and B. Jawerth,  \emph{Decomposition of Besov spaces}, Indiana Univ. Math. J. \textbf{34}
(1985) 777-799.

\bibitem{Fr_Ja1}
M. Frazier and B. Jawerth,  \emph{The $\varphi$-transform and applications to distribution spaces}, in "Function Spaces and Applications", Lecture Notes in Math.  Vol. 1302, Springer-Verlag, New York/Berlin,
(1988) 223-246.


\bibitem{Fr_Ja2}
M. Frazier and B. Jawerth,  \emph{A discrete transform and decomposition of distribution spaces}, J. Func. Anal. \textbf{93}
(1990) 34-170.

\bibitem{Fr_Ja3}
M. Frazier and B. Jawerth,  \emph{Applications of the $\phi$ and wavelet transforms to the theory of function spaces}, Wavelets and Their Applications, pp.377-417, Jones and Bartlett, Boston, MA, 1992.


\bibitem{Gr_He_Ho_Ng}
L. Grafakos, D. He, P. Honz\'ik, and H. V. Nguyen, \emph{The H\"ormander multiplier theorem I : The linear case revisited}, Illinois J. Math. \textbf{61} (2017) 25-35.



\bibitem{Gr_Park}
L. Grafakos and B. Park,  \emph{Sharp Hardy space estimates for multipliers}, submitted.

\bibitem{Gr_Sl}
L. Grafakos and L. Slav\'ikov\'a,  \emph{A sharp version of the H\"ormander multiplier theorem}, Int. Math. Res. Not. \textbf{15} (2019) 4764-4783.

\bibitem{Ho}
L. H\"ormander,  \emph{Estimates for translation invariant operators in $L_p$ spaces}, Acta Math.  \textbf{104}
(1960) 93-140.

\bibitem{Ka_Po}
T. Kato and G. Ponce, \emph{Commutator estimates and the Euler and Navier-Stokes equations}, Comm. Pure Appl. Math. \textbf{41} (1988) 891-907.


\bibitem{Mik}
S. G. Mihlin,  \emph{On the multipliers of Fourier integrals}, Dokl. Akad. Nauk SSSR (N.S.)  \textbf{109}
(1956) 701-703 (Russian).

\bibitem{Park1}
B. Park,  \emph{Some maximal inequalities on Triebel-Lizorkin spaces for $p=\infty$}, Math. Nachr. \textbf{292} (2019) 1137-1150

\bibitem{Park2}
B. Park,  \emph{Fourier multiplier theorems for Triebel-Lizorkin spaces}, Math. Z.  \textbf{293} (2019) 221-258.

\bibitem{Park4}
B. Park,  \emph{Boundedness of pseudo-differential operators of type $(0,0)$ on Triebel-Lizorkin and Besov spaces}, Bull. London Math. Soc. \textbf{51} (2019) 1039-1060.


\bibitem{Pe}
J. Peetre,  \emph{On spaces of Triebel-Lizorkin type}, Ark. Mat. \textbf{13} (1975) 123-130.

\bibitem{Se}
A. Seeger,  \emph{Remarks on singular convolution operators}, Studia Math. \textbf{97(2)}
(1990) 91-114.

\bibitem{Sl}
L. Slav\'ikov\'a,  \emph{On the failure of the H\"ormander multiplier theorem in a limiting case}, Rev. Mat. Iberoam. \textbf{36} (2020) 1013-1020.


\bibitem{Ta_We}
M. Taibleson, G. Weiss \emph{The molecular characterization of certain Hardy spaces}, Ast\'erisque \textbf{77} (1980) 67-151.


\bibitem{Tr0}
H. Triebel, \emph{On $L_p(l_q)$-Spaces of Entire Analytic Functions of Exponential Type: Complex Interpolation and Fourier Multipliers. The Case $0<p<\infty$, $0<q<\infty$}, J. Approx. Theory \textbf{28}
(1980) 317-328.

\bibitem{Tr1}
H. Triebel, \emph{Complex interpolation and Fourier multipliers for the spaces $B_{p,q}^{s}$ and $F_{p,q}^{s}$ of Besov-Hardy-Sobolev type : The case $0<p\leq\infty$, $0<p\leq \infty$}, Math. Z. \textbf{176}
(1981) 495-510.

\bibitem{Tr}
H. Triebel, \emph{Theory of Function Spaces}, Birkhauser, Basel-Boston-Stuttgart
(1983).



\end{thebibliography}
\end{document}